\pgfplotsset{compat=1.17}
\newtheorem{thm}{Theorem}[section]
\newtheorem{lem}[thm]{Lemma}
\newtheorem{ex}[thm]{Example}
\theoremstyle{definition}
\newtheorem{definition}[thm]{Definition}
\newtheorem{obs}[thm]{Observation}
\newtheorem{notation}[thm]{Notation}
\newtheorem{prob}[thm]{Problem}
\newtheorem{note}[thm]{Note}
\DeclareMathOperator{\Cat}{\mathbf{Cat}}
\DeclareMathOperator{\Rel}{\mathbf{Rel}}
\DeclareMathOperator{\Set}{\mathbf{Set}}
\DeclareMathOperator{\Vect}{\mathbf{Vect}}
\DeclareMathOperator{\Span}{\mathbf{Span}}
\DeclareMathOperator{\Pres}{\mathbf{Pres}}
\DeclareMathOperator{\Coalg}{\mathbf{Coalg}}
\DeclareMathOperator{\lax}{\mathbf{Lax}}
\DeclareMathOperator{\oplax}{\mathbf{Oplax}}
\DeclareMathOperator{\ps}{\mathbf{Ps}}
\DeclareMathOperator{\PGM}{\mathbf{PGM}}
\DeclareMathOperator{\catb}{\mathcal{B}}
\DeclareMathOperator{\catc}{\mathcal{C}}
\DeclareMathOperator{\catd}{\mathcal{D}}
\DeclareMathOperator{\cate}{\mathcal{E}}
\DeclareMathOperator{\catG}{\mathcal{G}}
\DeclareMathOperator{\mthz}{\mathbb{Z}}
\newcommand\authormark[1]{\textsuperscript{#1}}
\newcommand{\jonoteil}[1]{\ \todo[inline,color=blue!40,linecolor=blue!40!black,size=\normalsize]{#1}}
\newcommand{\rmnoteil}[1]{\ \todo[inline,color=green!40,linecolor=green!40!black,size=\normalsize]{#1}}
\newcommand{\rtarr}{\longrightarrow}
\newcommand{\ltarr}{\longleftarrow}
\newcommand{\monoto}{\hookrightarrow}
\newcommand{\epito}{\twoheadrightarrow}
\newcommand{\fto}{\xrightarrow}
\newcommand{\fot}{\xleftarrow}
\newcommand{\ot}{\leftarrow}
\newcommand{\impl}{\Rightarrow}
\newcommand{\toeq}{\stackrel{\simeq}{\to}}
\newcommand{\mh}{\mbox{-}} 
\newcommand{\cn}{\colon}
\newcommand{\id}{\mathrm{id}}
\newcommand{\Id}{\mathrm{Id}}
\newcommand{\ob}{\mathrm{ob}}
\newcommand{\e}{\mathrm{e}}
\newcommand{\al}{\alpha}
\newcommand{\be}{\beta}
\newcommand{\ga}{\gamma}
\newcommand{\de}{\delta}
\newcommand{\pa}{\partial}   
\newcommand{\epz}{\varepsilon}
\newcommand{\ph}{\phi}
\newcommand{\phy}{\varphi}
\newcommand{\phz}{\varphi}
\newcommand{\et}{\eta}
\newcommand{\io}{\iota}
\newcommand{\ka}{\kappa}
\newcommand{\la}{\lambda}
\newcommand{\tha}{\theta}
\newcommand{\thz}{\vartheta}
\newcommand{\si}{\sigma}
\newcommand{\ze}{\zeta}
\newcommand{\om}{\omega}
\newcommand{\Ga}{\Gamma}
\newcommand{\GA}{\Gamma}
\newcommand{\La}{\Lambda}
\newcommand{\LA}{\Lambda}
\newcommand{\De}{\Delta}
\newcommand{\DE}{\Delta}
\newcommand{\Si}{\Sigma}
\newcommand{\SI}{\Sigma}
\newcommand{\Th}{\Theta}
\newcommand{\Om}{\Omega}
\newcommand{\OM}{\Omega}
\newcommand{\Ups}{\Upsilon}
\newcommand{\UPS}{\Upsilon}
\newcommand{\cc}{\mathbb{C}}
\newcommand{\wt}{\widetilde}
\newcommand{\strictslice}[2]{#1\hspace{-2pt}\sslash_{#2}^{\textrm{str}}}
\newcommand{\strongslice}[2]{#1\hspace{-2pt}\sslash_{#2}}
\newcommand{\conslice}[2]{#1\hspace{-2pt}\sslash_{#2}^{\textrm{adj}}}
\newcommand{\opconslice}[2]{#1\hspace{-2pt}\sslash_{#2}^{\textrm{opadj}}}
\renewcommand{\cal}{\mathcal}
\newcommand{\Simp}{\textbf{SimpSet}}
\newcommand{\omegagpd}{\omega\textbf{Gpd}}
\newcommand{\Gpd}{\textbf{Gpd}}
\newcommand{\Grp}{\textbf{Grp}}
\newcommand{\commMon}{\textbf{commMon}}
\newcommand{\stB}{B^*}
\newcommand{\Hst}{H^\ast}
\newcommand{\decB}{(\stB,B)}
\newcommand{\bul}{\bullet}
\newcommand{\acts}{\curvearrowright}
\newcommand{\dcat}{\textbf{dCat}}
\newcommand{\bcat}{\textbf{bCat}}
\newcommand{\bcatast}{\bcat ^\ast}
\newcommand{\twocatast}{\textbf{2Cat}^\ast}
\newcommand{\gcat}{\textbf{gCat}}
\newcommand{\Bdecor}{\decB}
\newcommand{\EndCat}{\textbf{$\pi_2$Ind}_{\Bdecor}}
\newcommand{\opIndCat}{\textbf{$\pi_2$opInd}_{\Bdecor}}
\newcommand{\gcatB}{\textbf{gCat}$_{(B^*,B)}$}
\newcommand{\gcatBfirst}{\textbf{gCat}$_{(B^*,B)}^{\ell=1}$}
\newcommand{\Hor}{\textbf{Hor}}
\newcommand{\boldH}{\mathbb{H}}
\newcommand{\ovC}{\overline{C}}
\newcommand{\crossb}{B\rtimes_\Phi B^\ast}
\newcommand{\intGrot}{\int_{\stB}\Phi}
\newcommand{\opintGrot}{\int_{B^{*op}}\Phi}
\newcommand{\GcopB}{\intGrot\sqcup B_1}
\newcommand{\GcoppiB}{\intGrot\sqcup_\pi B_1}
\newcommand{\indFunct}{\Phi:\stB\to\commMon}
\newcommand{\doubmod}{\mathbb{M}\mbox{\textbf{od}}}
\newcommand{\doubprof}{\mathbb{P}\mbox{\textbf{rof}}}
\newcommand{\doubvn}{[W^\ast]}
\newcommand{\doubbord}{\mathbb{B}\mbox{\textbf{ord}}}
\newcommand{\doubadj}{\mathbb{A}\mbox{\textbf{dj}}}
\title{Internalizations of decorated bicategories via $\pi_2$-indexings.}
\author{Juan Orendain\thanks{Corresponding author. Contact: juan.orendain@case.edu. Mathematics, Applied Mathematics, and Statistics department, Case Western Reserve University, 10900 Euclid Ave., Cleveland, OH, USA.} and Jos\'e Rub\'en Maldonado-Herrera\thanks{Contact: rubasmh@matmor.unam.mx. Centro de Ciencias Matem\'aticas, UNAM, Antigua Carretera a Pátzcuaro 8701
Col. Ex Hacienda San José de la Huerta, Morelia, Michoacan, Mexico.}}
\begin{document}

\maketitle
\abstract{We treat the problem of lifting bicategories into double categories through categories of vertical morphisms. We consider structures on decorated 2-categories allowing us to formally implement arguments of sliding certain squares along vertical subdivisions in double categories. We call these structures $\pi_2$-indexings. We present a construction associating, to every $\pi_2$-indexing on a decorated 2-category, a length 1 double internalization.}

\section{Introduction}\label{sec:intro}
Symmetric monoidal structures on bicategories are, in some cases, more naturally expressed as horizontalizations of symmetric monoidal structures of double categories. In order to define a symmetric monoidal structure on a bicategory $B$, in the general sense of \cite{GordonPowerStreet}, it is sometimes convenient to 'lift' $B$ to a symmetric monoidal double category $C$, where the coherence equations for the symmetric monoidal structure are expressed in terms of vertical arrows of $C$. In the case in which the double category $C$ is a framed bicategory, the image of the symmetric monoidal structure on $C$, under the horizontal bicategory functor $H$, defines a symmetric monoidal bicategory structure on $B$ \cite{ShulmanFramed,ShulmanDerived}. Whether this technique can be applied to a given bicategory $B$ depends on the specifics of $B$. We are interested in the problem of establishing general criteria for when this procedure can be applied. 

We consider the following problem: Given a bicategory $B$, and a category $B^*$, such that the collections of objects of $B$ and $\stB$ are equal, we wish to construct interesting double categories $C$ having $B$ as horizontal bicategory, and having $\stB$ as category of objects. Succintly $H^*C=(\stB,B)$. We say that the pair $\decB$ is a decorated bicategory and $C$ is an internalization of $\decB$. The problem of finding interesting internalizations to a given decorated bicategory $\decB$ models the case in which one wishes to define a symmetric monoidal structure on the bicategory $B$, where coherence data would be more naturally expressed in terms of a collection of vertical arrows, $\stB$. We then wish to have knowledge of the possible extensions of $B$ to double categories $C$ having $\stB$ as category of objects, i.e. $H^\ast C=\decB$. With this knowledge we would, in principle, be able to decide whether we can choose, among the internalizations of $\decB$, a framed double category where the symmetric monoidal structure we are trying to define on $B$ is naturally defined. 

The problem of understanding internalizations of decorated bicategories has been considered in the series of papers \cite{Orendain1,Orendain2,Orendain3}, where the 2-category of double categories is fibered over a coreflective subcategory minimizing the internalization problem. This allows the definition of a numerical invariant, called the vertical length $\ell C$, associated to every double category $C$. Roughly, the number $\ell C$ measures the amount of work one would be expected to do to construct a generic square in $C$, from squares in $H^\ast C$. 1 is the minimum possible length of a double category, and the condition $\ell C=1$ roughly means that we can express squares of $C$ in terms of squares of $H^\ast C$ in a relatively straightforward way. Most framed bicategories in the literature, e.g. $\doubmod,\doubprof,\doubbord,\doubadj$ are of length 1. It is thus natural to conjecture that every framed bicategory is of length 1. This conjecture will be treated in a subsequent paper. 

In this paper we consider the problem of deciding whether a decorated bicategory $\decB$ admits internalizations of length 1. The condition of a double category $C$ being of length 1 depends on certain 2-dimensional compatibility conditions satisfied by squares in $H^\ast C$, see \ref{ss:lengths}. We study a type of structure allowing for the construction of internalizations satisfying this compatibility condition. We call the structure we study $\pi_2$-indexings. $\pi_2$-indexings are a type of indexing associated to a decorated bicategory $\decB$, relating the arrows of $\stB$ with 2-cells of a specific type in $B$. The definition of $\pi_2$-indexing appears in Definition \ref{def:opindexing}, where related structures, which we call $\pi_2$-opindexings also appear. The main goal of the paper is to prove the following result 

\begin{thm}\label{thm:main}
    Let $\decB$ be a decorated 2-category. For every $\pi_2$-indexing, or $\pi_2$-opindexing $\Phi$ on $\decB$, there exists an internalization $\crossb$ of $\decB$ such that $\crossb$ is of length 1.
\end{thm}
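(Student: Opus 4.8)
The plan is to give an explicit construction of the double category $\crossb$ and then verify directly that it has the right horizontalization and vertical length $1$. I would treat the $\pi_2$-indexing case in detail and obtain the $\pi_2$-opindexing case by a dualization argument (passing to a suitable ``op'' of the ambient structures), so that no separate computation is needed for the second half of the statement.

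First I would spell out the squares of $\crossb$. The notation $\crossb = B\rtimes_\Phi B^\ast$ signals a semidirect-product-type construction: objects are the common objects of $B$ and $\stB$, horizontal morphisms are the $1$-cells of $B$, vertical morphisms are the morphisms of $\stB$, and a square with horizontal boundary $f,g$ and vertical boundary $u,v$ should be a $2$-cell of $B$ of a prescribed shape, namely one that factors through the action of $\Phi$ on the vertical arrows $u,v$. Concretely, I expect a square to be a $2$-cell $g\circ u_\ast \Rightarrow v_\ast\circ f$ (or the analogous composite built from the images under $\Phi$ of $u$ and $v$), where $(-)_\ast$ denotes the assignment of $2$-cells to arrows of $\stB$ furnished by the $\pi_2$-indexing $\Phi$. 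The first substantive step is therefore to check that this data assembles into a double category: vertical composition of squares uses functoriality of $\Phi$ along $\stB$ together with the coherence of the indexing, horizontal composition uses the interchange and whiskering in $B$, and the two compositions satisfy interchange because interchange holds in $B$. Here I would lean on the defining axioms of a $\pi_2$-indexing from Definition \ref{def:opindexing} precisely at the points where compositing the $\Phi$-data is required to be associative and unital.

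Next I would verify $H^\ast(\crossb) = \decB$. By construction the category of objects is $\stB$ on the nose, and the horizontal bicategory has the $1$-cells and $2$-cells of $B$ with $B$'s composition; the only thing to check is that globular squares (those with identity vertical boundary) in $\crossb$ are exactly the $2$-cells of $B$, which follows once the indexing is normalized so that $\Phi$ sends identity arrows of $\stB$ to identity $2$-cells — again part of the definition of a $\pi_2$-indexing. So $H^\ast\crossb = (\stB,B) = \decB$.

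The heart of the argument, and the step I expect to be the main obstacle, is showing $\ell(\crossb) = 1$. By the characterization of length recalled in \ref{ss:lengths}, this amounts to showing that every square of $\crossb$ can be obtained from globular squares together with ``basic'' squares built from vertical arrows in a single controlled step — equivalently, that the 2-dimensional compatibility condition governing length $1$ holds in $\crossb$. This is exactly where the ``sliding squares along vertical subdivisions'' mechanism advertised in the abstract comes in: given a square whose vertical source is a composite $u_2\circ u_1$ in $\stB$, I must exhibit a canonical way to slide it into a vertical pasting of two squares with vertical boundaries $u_1$ and $u_2$, coherently. The $\pi_2$-indexing $\Phi$ provides precisely the $2$-cells that realize this decomposition, and its functoriality/coherence axioms are what make the decomposition well-defined and associative. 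Concretely I would (i) identify the generating set of squares of $\crossb$ closed under the length-$1$ operations, (ii) show every square lies in the subcategory they generate by inducting on a chosen vertical factorization and invoking the indexing coherence at each step, and (iii) check the uniqueness/compatibility clause of the length-$1$ definition — this last verification being the delicate one, since it requires the slid decompositions to agree under horizontal composition, which reduces to interchange in $B$ plus the compatibility of $\Phi$ with horizontal composition of $1$-cells. Once (i)--(iii) are in place, $\ell(\crossb)\le 1$; since $1$ is the minimum possible length, $\ell(\crossb)=1$, completing the proof. Finally, for the $\pi_2$-opindexing case I would observe that a $\pi_2$-opindexing on $\decB$ is the same as a $\pi_2$-indexing on the decorated $2$-category obtained by reversing the appropriate direction, and that $\crossb$ for the opindexing is the corresponding reflection of the construction above, so length $1$ transfers.
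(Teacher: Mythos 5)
Your high-level plan (construct $\crossb$ explicitly, verify $H^\ast\crossb=\decB$, use a sliding argument to get length $1$, dualize for opindexings) matches the paper's strategy, but the concrete description of the squares of $\crossb$ is based on a misreading of what a $\pi_2$-indexing is, and this gap is not repairable within the proposal. You posit squares of the form ``$g\circ u_\ast \Rightarrow v_\ast\circ f$, where $(-)_\ast$ denotes the assignment of $2$-cells to arrows of $\stB$ furnished by $\Phi$.'' But by Definition~\ref{def:opindexing}, $\Phi$ is a functor $\stB\to\commMon$: it sends each arrow $f\colon a\to b$ of $\stB$ to a monoid homomorphism $\Phi(f)\colon\pi_2(B,a)\to\pi_2(B,b)$, not to a $1$-cell (or $2$-cell) of $B$. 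There is no ``$u_\ast$''; a $\pi_2$-indexing does not convert vertical arrows into horizontal ones, and so the quintet-style square you describe does not exist. The same misconception recurs when you say $\Phi$ ``sends identity arrows of $\stB$ to identity $2$-cells'' and that $\Phi$ ``provides precisely the $2$-cells that realize the decomposition''; what $\Phi$ provides is transport of endomorphism monoids, not cells of $B$.

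The paper's actual construction is genuinely different in its details. It views $\Phi$ as $\Cat$-valued via delooping, takes the Grothendieck construction $\intGrot$ (whose morphisms are pairs $(f,\varphi)$ with $f\in\stB$ and $\varphi\in\pi_2(B,\mathrm{cod}\,f)$), and forms $\crossb_1$ as the pushout $\GcoppiB$ of $\intGrot$ and $B_1$ along $\bigsqcup_b\pi_2(B,b)$; a square is then a formal composite of $2$-cells of $B$ and pairs $(f,\varphi)$, modulo the gluing relations. The sliding you correctly anticipate is realized not by companions but by the Grothendieck composition identity $(f,\mathrm{id})\boxminus(\mathrm{id},\varphi)=(\mathrm{id},\Phi(f)(\varphi))\boxminus(f,\mathrm{id})$ (Observation~\ref{obs:slidingdowntheorem}), which forces any intermediate $\pi_2$-square trapped between unit squares to slide out, yielding a canonical decomposition $\varphi_\downarrow\boxminus U_f\boxminus\varphi_\uparrow$. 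The paper then cites a separate combinatorial lemma (Lemma~\ref{lem:canonicalform}: if every non-globular length-$1$ square admits a canonical decomposition then the double category has length $1$), which is the precise content your step (iii) gestures at but does not isolate. If you replace your quintet-shaped squares by the pushout construction and isolate Lemma~\ref{lem:canonicalform} as a standalone step, your outline becomes the paper's proof.
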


\noindent We present examples and conjectures related to the construction in Theorem \ref{thm:main}.

We now present the contents of this paper. In Section \ref{s:Prel} we briefly review the notions of internalization, globularly generated double category, and length. In Section \ref{s:mainthm} we present our definition of $\pi_2$-indexing, and state our main theorem. We review some of the intuitive ideas behind the definition of $\pi_2$-indexings. In Section \ref{sec:internalizationsendindex} we present the proof of our main theorem. In Section \ref{s:examples} we present examples relevant to our construction. Finally, in Section \ref{s:outlook} we present open problems and possible directions related to the results presented in this paper.


\section{Preliminaries}\label{s:Prel}

\subsection{Double categories}\label{ss:Doubelcats}
\noindent We will write $C_0,C_1$ for the category of objects and the category of morphisms of a double category $C$, we will write $L,R$ for the left and right frame functors of $C$, and we will write $U$ for the unit functor of $C$. Following \cite{ShulmanDerived} we will write $\boxminus,\boxvert$ for the vertical and horizontal composition operations of $C$ respectively. We will write \textbf{dCat} for the 2-category of 'weak double categories', pseudo-double functors, and horizontal natural transformations \cite[Section 1.4]{GrandisPare}. We will represent squares in double categories as diagrams of the form:
\begin{center}
    
\tikzset{every picture/.style={line width=0.75pt}} 

\begin{tikzpicture}[x=0.75pt,y=0.75pt,yscale=-1,xscale=1]

\draw   (240.73,120.3) -- (300.9,120.3) -- (300.9,180.47) -- (240.73,180.47) -- cycle ;

\draw (263.92,144.91) node [anchor=north west][inner sep=0.75pt]  [font=\scriptsize]  {$\varphi $};
\draw (225.07,145.2) node [anchor=north west][inner sep=0.75pt]  [font=\scriptsize]  {$f$};
\draw (306.64,144.91) node [anchor=north west][inner sep=0.75pt]  [font=\scriptsize]  {$g$};
\draw (265,105) node [anchor=north west][inner sep=0.75pt]  [font=\scriptsize]  {$\alpha $};
\draw (265.92,185.26) node [anchor=north west][inner sep=0.75pt]  [font=\scriptsize]  {$\beta $};
\draw (225.07,113.2) node [anchor=north west][inner sep=0.75pt]  [font=\scriptsize]  {$a$};
\draw (225.07,174.2) node [anchor=north west][inner sep=0.75pt]  [font=\scriptsize]  {$c$};
\draw (305.07,174.2) node [anchor=north west][inner sep=0.75pt]  [font=\scriptsize]  {$d$};
\draw (306.07,113.2) node [anchor=north west][inner sep=0.75pt]  [font=\scriptsize]  {$b$};

\end{tikzpicture}

\end{center}

\noindent where squares are read from left to right and from top to bottom, i.e. the edges $f,g$ are the left and right frame of the square $\varphi$, while the edges $\alpha,\beta$ are the vertical domain and codomain of $\varphi$ respectively. We will adopt the following convention: Vertical and horizontal identity arrows will always be colored red, e.g. in the following squares
\begin{center}

\tikzset{every picture/.style={line width=0.75pt}} 

\begin{tikzpicture}[x=0.75pt,y=0.75pt,yscale=-1,xscale=1]

\draw    (260.07,139.97) -- (320.23,139.97) ;
\draw [color={rgb, 255:red, 208; green, 2; blue, 27 }  ,draw opacity=1 ]   (260.07,139.97) -- (260.07,200.13) ;
\draw    (260.07,200.13) -- (320.23,200.13) ;
\draw    (320.23,139.97) -- (320.23,200.13) ;
\draw    (351.07,140.3) -- (411.23,140.3) ;
\draw    (351.07,140.3) -- (351.07,200.47) ;
\draw    (350.73,200.13) -- (410.9,200.13) ;
\draw [color={rgb, 255:red, 208; green, 2; blue, 27 }  ,draw opacity=1 ]   (410.9,139.97) -- (410.9,200.13) ;
\draw [color={rgb, 255:red, 208; green, 2; blue, 27 }  ,draw opacity=1 ]   (440.73,139.63) -- (500.9,139.63) ;
\draw    (440.73,139.63) -- (440.73,199.8) ;
\draw [color={rgb, 255:red, 208; green, 2; blue, 27 }  ,draw opacity=1 ]   (440.4,199.47) -- (500.57,199.47) ;
\draw    (500.57,139.3) -- (500.57,199.47) ;

\end{tikzpicture}

\end{center}
the left frame of the square on the left is an identity, the right frame of the square in the middle is an identity and the domain and codomain of the rightmost square are both identities. We identify unit squares as being marked with the letter $U$:
\begin{center}

\tikzset{every picture/.style={line width=0.75pt}} 

\begin{tikzpicture}[x=0.75pt,y=0.75pt,yscale=-1,xscale=1]

\draw [color={rgb, 255:red, 208; green, 2; blue, 27 }  ,draw opacity=1 ]   (300.07,139.3) -- (360.23,139.3) ;
\draw    (300.07,139.3) -- (300.07,199.47) ;
\draw [color={rgb, 255:red, 208; green, 2; blue, 27 }  ,draw opacity=1 ]   (299.73,199.13) -- (359.9,199.13) ;
\draw [color={rgb, 255:red, 0; green, 0; blue, 0 }  ,draw opacity=1 ]   (359.9,138.97) -- (359.9,199.13) ;

\draw (284,161.07) node [anchor=north west][inner sep=0.75pt]  [font=\scriptsize]  {$f$};
\draw (364,161.73) node [anchor=north west][inner sep=0.75pt]  [font=\scriptsize]  {$f$};
\draw (324,166.83) node [anchor=north west][inner sep=0.75pt]  [font=\scriptsize]  {$U$};

\end{tikzpicture}

\end{center}
\noindent An important class of squares is the collection squares of the form 
\begin{center}

\tikzset{every picture/.style={line width=0.75pt}} 

\begin{tikzpicture}[x=0.75pt,y=0.75pt,yscale=-1,xscale=1]

\draw [color={rgb, 255:red, 208; green, 2; blue, 27 }  ,draw opacity=1 ]   (320.07,159.3) -- (380.23,159.3) ;
\draw [color={rgb, 255:red, 208; green, 2; blue, 27 }  ,draw opacity=1 ]   (320.07,159.3) -- (320.07,219.47) ;
\draw [color={rgb, 255:red, 208; green, 2; blue, 27 }  ,draw opacity=1 ]   (319.73,219.13) -- (379.9,219.13) ;
\draw [color={rgb, 255:red, 208; green, 2; blue, 27 }  ,draw opacity=1 ]   (379.9,158.97) -- (379.9,219.13) ;

\end{tikzpicture}

\end{center}
\noindent We will write $\pi_2(C,a)$ for the set of all such squares with vertex $a$. Observe that by the Eckmann-Hilton argument, the horizontal composition and the vertical composition of such squares coincide and are commutative. The set $\pi_2(C,a)$ is thus a commutative monoid, with the following square as identity element:
\begin{center}

\tikzset{every picture/.style={line width=0.75pt}} 

\begin{tikzpicture}[x=0.75pt,y=0.75pt,yscale=-1,xscale=1]

\draw [color={rgb, 255:red, 208; green, 2; blue, 27 }  ,draw opacity=1 ]   (340.07,179.3) -- (400.23,179.3) ;
\draw [color={rgb, 255:red, 208; green, 2; blue, 27 }  ,draw opacity=1 ]   (340.07,179.3) -- (340.07,239.47) ;
\draw [color={rgb, 255:red, 208; green, 2; blue, 27 }  ,draw opacity=1 ]   (339.73,239.13) -- (399.9,239.13) ;
\draw [color={rgb, 255:red, 208; green, 2; blue, 27 }  ,draw opacity=1 ]   (399.9,178.97) -- (399.9,239.13) ;

\draw (364.33,203.07) node [anchor=north west][inner sep=0.75pt]  [font=\footnotesize]  {$U$};

\end{tikzpicture}

\end{center}
\noindent Squares of the form:
\begin{center}

\tikzset{every picture/.style={line width=0.75pt}} 

\begin{tikzpicture}[x=0.75pt,y=0.75pt,yscale=-1,xscale=1]

\draw [color={rgb, 255:red, 0; green, 0; blue, 0 }  ,draw opacity=1 ]   (300.07,119.3) -- (360.23,119.3) ;
\draw [color={rgb, 255:red, 208; green, 2; blue, 27 }  ,draw opacity=1 ]   (300.07,119.3) -- (300.07,179.47) ;
\draw [color={rgb, 255:red, 0; green, 0; blue, 0 }  ,draw opacity=1 ]   (299.73,179.13) -- (359.9,179.13) ;
\draw [color={rgb, 255:red, 208; green, 2; blue, 27 }  ,draw opacity=1 ]   (359.9,118.97) -- (359.9,179.13) ;

\end{tikzpicture}

\end{center}
\noindent are called globular. The collection of globular squares of a double category $C$ forms a bicategory $HC$, the horizontal bicategory of $C$. The horizontal bicategory construction assembles into a 2-functor $H:\dcat\to\bcat$, the horizontalization 2-functor. Thus defined $H$ admits a left adjoint $\boldH:\bcat\to\dcat$ associating to every bicategory $B$ the vertically trivial double category of $B$. Thus defined $\boldH$ is an embedding, and we identify every bicategory $B$ with its image $\boldH B$ under $\boldH$. Under this identification, bicategories are precisely the double categories whose squares are all globular.

\subsection{Internalization}\label{ss:Int}
\noindent A decoration of a bicategory $B$ is a category $B^\ast$ having the same collection of objects as $B$. If $B^\ast$ is a decoration of $B$ we say that $\decB$ is a \textbf{decorated bicategory}. The pair $(C_0,HC)$ is a decorated bicategory for every double category $C$. We write $H^*C$ for this decorated bicategory, and we call it the decorated horizontalization of $C$. We will write \textbf{bCat}$^*$ for the category of decorated bicategories and decorated 2-functors, see \cite{Orendain1}. We are interested in the following problem:

\begin{prob}\label{prob:Internal}
Let $\decB$ be a decorated bicategory. Find double categories $C$ satisfying the equation $H^*C=\decB$.
\end{prob}

\noindent We understand problem \ref{prob:Internal} as the problem of lifting a bicategory $B$ to a double category through an orthogonal direction, provided by $\stB$. A solution to Problem \ref{prob:Internal} for a decorated bicategory $\decB$ will be called an \textbf{internalization} of $B$. Problem \ref{prob:Internal} can be interpreted as follows: Suppose we are given a collection of globular squares:
\begin{center}

\tikzset{every picture/.style={line width=0.75pt}} 

\begin{tikzpicture}[x=0.75pt,y=0.75pt,yscale=-1,xscale=1]

\draw [color={rgb, 255:red, 0; green, 0; blue, 0 }  ,draw opacity=1 ]   (300.07,119.3) -- (360.23,119.3) ;
\draw [color={rgb, 255:red, 208; green, 2; blue, 27 }  ,draw opacity=1 ]   (300.07,119.3) -- (300.07,179.47) ;
\draw [color={rgb, 255:red, 0; green, 0; blue, 0 }  ,draw opacity=1 ]   (299.73,179.13) -- (359.9,179.13) ;
\draw [color={rgb, 255:red, 208; green, 2; blue, 27 }  ,draw opacity=1 ]   (359.9,118.97) -- (359.9,179.13) ;

\end{tikzpicture}
\end{center}

\noindent forming a bicategory, together with a collection of vertical arrows of the form:
\begin{center}

\tikzset{every picture/.style={line width=0.75pt}} 

\begin{tikzpicture}[x=0.75pt,y=0.75pt,yscale=-1,xscale=1]

\draw [color={rgb, 255:red, 0; green, 0; blue, 0 }  ,draw opacity=1 ]   (320.07,139.3) -- (320.07,197.47) ;
\draw [shift={(320.07,199.47)}, rotate = 270] [color={rgb, 255:red, 0; green, 0; blue, 0 }  ,draw opacity=1 ][line width=0.75]    (6.56,-2.94) .. controls (4.17,-1.38) and (1.99,-0.4) .. (0,0) .. controls (1.99,0.4) and (4.17,1.38) .. (6.56,2.94)   ;

\end{tikzpicture}

\end{center}
\noindent forming a category, satisfying the condition that the collection of vertices of both sets of diagrams coincide. With this data we can form hollow squares of the form:

\begin{center}

\tikzset{every picture/.style={line width=0.75pt}} 

\begin{tikzpicture}[x=0.75pt,y=0.75pt,yscale=-1,xscale=1]

\draw [color={rgb, 255:red, 0; green, 0; blue, 0 }  ,draw opacity=1 ]   (320.07,139.3) -- (380.23,139.3) ;
\draw [color={rgb, 255:red, 0; green, 0; blue, 0 }  ,draw opacity=1 ]   (320.07,139.3) -- (320.07,199.47) ;
\draw [color={rgb, 255:red, 0; green, 0; blue, 0 }  ,draw opacity=1 ]   (319.73,199.13) -- (379.9,199.13) ;
\draw [color={rgb, 255:red, 0; green, 0; blue, 0 }  ,draw opacity=1 ]   (379.9,138.97) -- (379.9,199.13) ;

\end{tikzpicture}

\end{center}
\noindent formed by the edges of the diagrams we are provided with. Problem \ref{prob:Internal} asks about ways to fill these hollow squares \textit{equivariantly} with respect to the globular diagrams in our set of initial conditions. That is, Problem \ref{prob:Internal} asks for the existence of systems of solid squares of the form:

\begin{center}

\tikzset{every picture/.style={line width=0.75pt}} 

\begin{tikzpicture}[x=0.75pt,y=0.75pt,yscale=-1,xscale=1]

\draw  [fill={rgb, 255:red, 80; green, 227; blue, 194 }  ,fill opacity=0.57 ] (220.87,130.9) -- (280.7,130.9) -- (280.7,190.73) -- (220.87,190.73) -- cycle ;

\end{tikzpicture}

\end{center}

\noindent forming a double category such that the only solid squares of the form:

\begin{center}

\tikzset{every picture/.style={line width=0.75pt}} 

\begin{tikzpicture}[x=0.75pt,y=0.75pt,yscale=-1,xscale=1]

\draw  [color={rgb, 255:red, 255; green, 255; blue, 255 }  ,draw opacity=1 ][fill={rgb, 255:red, 80; green, 227; blue, 194 }  ,fill opacity=0.57 ] (240.87,150.9) -- (300.7,150.9) -- (300.7,210.73) -- (240.87,210.73) -- cycle ;
\draw [color={rgb, 255:red, 208; green, 2; blue, 27 }  ,draw opacity=1 ]   (240.87,150.9) -- (240.87,211.07) ;
\draw [color={rgb, 255:red, 208; green, 2; blue, 27 }  ,draw opacity=1 ]   (300.7,150.57) -- (300.7,210.73) ;
\draw [color={rgb, 255:red, 0; green, 0; blue, 0 }  ,draw opacity=1 ]   (240.87,210.73) -- (300.7,210.73) ;
\draw [color={rgb, 255:red, 0; green, 0; blue, 0 }  ,draw opacity=1 ]   (240.87,150.9) -- (300.7,150.9) ;

\end{tikzpicture}

\end{center}

\noindent are the globular diagrams provided as set of initial conditions. We regard the decorated horizontalization condition of Problem \ref{prob:Internal} as a formalization of the equivariance condition on the above squares. 

Constructions of this sort appear in different parts of the theory of double categories. Notably the Ehresmann double category of quintets \cite{Ehr3}, the double category of adjoint pairs \cite{Palmquist}, and the double categories of spans and cospans constructions all follow the pattern described above. The double category of quintets has a given 2-category and the corresponding category of 1-cells as set of initial data, the double category of adjoints has a given 2-category together with adjoint pairs of 1-cells as set of initial data, and the double category of spans/cospans has the bicategory of spans/cospans of a category with pushouts/pullbacks and the arrows of this category as globular and vertical sets of initial data. In all cases solid squares are carefully chosen so as to encode different aspects of the globular theory.

\subsection{Globularly generated double categories}\label{ss:ggdoublecats}
\noindent Globularly generated double categories were introduced in \cite{Orendain1} as minimal solutions to Problem \ref{prob:Internal}. A double category $C$ is \textbf{globularly generated} if $C$ is generated, as a double category, by its collection of globular squares. Pictorially a double category $C$ is globularly generated if every square of $C$ can be written as vertical and horizontal compositions of squares of the form:
\begin{center}

\tikzset{every picture/.style={line width=0.75pt}} 

\begin{tikzpicture}[x=0.75pt,y=0.75pt,yscale=-1,xscale=1]

\draw [color={rgb, 255:red, 0; green, 0; blue, 0 }  ,draw opacity=1 ]   (230.07,140.3) -- (290.23,140.3) ;
\draw [color={rgb, 255:red, 208; green, 2; blue, 27 }  ,draw opacity=1 ]   (230.07,140.3) -- (230.07,200.47) ;
\draw [color={rgb, 255:red, 0; green, 0; blue, 0 }  ,draw opacity=1 ]   (229.73,200.13) -- (289.9,200.13) ;
\draw [color={rgb, 255:red, 208; green, 2; blue, 27 }  ,draw opacity=1 ]   (289.9,139.97) -- (289.9,200.13) ;
\draw [color={rgb, 255:red, 208; green, 2; blue, 27 }  ,draw opacity=1 ]   (330.73,139.97) -- (390.9,139.97) ;
\draw [color={rgb, 255:red, 0; green, 0; blue, 0 }  ,draw opacity=1 ]   (330.73,139.97) -- (330.73,200.13) ;
\draw [color={rgb, 255:red, 208; green, 2; blue, 27 }  ,draw opacity=1 ]   (330.4,199.8) -- (390.57,199.8) ;
\draw [color={rgb, 255:red, 0; green, 0; blue, 0 }  ,draw opacity=1 ]   (390.57,139.63) -- (390.57,199.8) ;

\draw (354,164.07) node [anchor=north west][inner sep=0.75pt]  [font=\footnotesize]  {$U$};

\end{tikzpicture}
\end{center}
\noindent Given a double category $C$ we write $\gamma C$ for the sub-double category of $C$ generated by squares of the above form. We call $\gamma C$ the \textbf{globularly generated piece} of $C$. $\gamma C$ is globularly generated, satisfies the equation 

\[H^*C=H^*\gamma C\]

\noindent and is contained in every sub-double category $D$ of $C$ satisfying the equation $H^*C=H^*D$. Moreover, a double category $C$ is globularly generated if and only if $C$ does not contain proper sub-double categories satisfying the above equation. Globularly generated double categories are thus minimal with respect to $H^*$.

Let \textbf{gCat} denote the full sub-category of \textbf{dCat} generated by globularly generated double categories. Decorated horizontalization extends to a functor $H^*:\mbox{\textbf{dCat}}\to\mbox{\textbf{bCat}}^*$ and the globularly generated piece construction extends to a functor $\gamma:\mbox{\textbf{dCat}}\to\mbox{\textbf{gCat}}$. In \cite[Proposition 3.6]{Orendain1} it is proven that $\gamma$ is a coreflector of \textbf{gCat} in \textbf{dCat}. It is easily seen that this implies that $\gamma$ is a Grothendieck fibration. Moreover, $H^*$ is constant on $\gamma$-fibers. We present this through the following diagram:

\begin{center}

\begin{tikzpicture}
\matrix(m)[matrix of math nodes, row sep=4em, column sep=4em,text height=1.5ex, text depth=0.25ex]
{\mbox{\textbf{dCat}}&&\mbox{\textbf{bCat}}^*\\
&\mbox{\textbf{gCat}}&\\};
\path[->,font=\scriptsize,>=angle 90]
(m-1-1) edge node [above]{$H^*$} (m-1-3)
        edge node [left]{$\gamma$} (m-2-2)
(m-2-2) edge node [right]{$H^*\restriction_{\mbox{\textbf{gCat}}}$}(m-1-3)
(m-2-2) edge [bend left=55] node [black,left]{$i$}(m-1-1)
(m-2-2) edge [white,bend left=30] node [black, fill=white]{$\dashv$}(m-1-1)

;
\end{tikzpicture}
\end{center}

\noindent where $i$ denotes the inclusion of \textbf{gCat} in \textbf{dCat}. The above diagram breaks Problem \ref{prob:Internal} into the problem of studying bases of $\gamma$ and then understanding the double categories in each fiber. We follow this strategy and thus study globularly generated double categories, i.e. bases with respect to $\gamma$. In \cite{Orendain3} it is proven that the above diagram can be completed to a diagram of the form:
\begin{center}
    
\begin{center}

\begin{tikzpicture}
\matrix(m)[matrix of math nodes, row sep=4em, column sep=4em,text height=1.5ex, text depth=0.25ex]
{\mbox{\textbf{dCat}}&&\mbox{\textbf{bCat}}^*\\
&\mbox{\textbf{gCat}}&\\};
\path[->,font=\scriptsize,>=angle 90]
(m-1-1) edge node [above]{$H^*$} (m-1-3)
        edge node [left]{$\gamma$} (m-2-2)
(m-2-2) edge node [right]{$H^*$}(m-1-3)

(m-2-2) edge [bend left=65] node [black,left]{$i$}(m-1-1)
(m-2-2) edge [white,bend left=30] node [black, fill=white]{$\dashv$}(m-1-1)

(m-1-3) edge [bend left=65] node [black,right]{$Q$}(m-2-2)
(m-1-3) edge [white, bend left=30] node [black, fill=white]{$\vdash$}(m-2-2)

;
\end{tikzpicture}

\end{center}
\end{center}
\subsection{Length}\label{ss:lengths}
\noindent Globularly generated double categories admit a helpful combinatorial description provided in the form of a filtration of their categories of squares. Given a globularly generated double category $C$ we write $V^1_C$ for the category formed by vertical compositions of squares of the form:

\begin{center}

\tikzset{every picture/.style={line width=0.75pt}} 

\begin{tikzpicture}[x=0.75pt,y=0.75pt,yscale=-1,xscale=1]

\draw [color={rgb, 255:red, 0; green, 0; blue, 0 }  ,draw opacity=1 ]   (230.07,140.3) -- (290.23,140.3) ;
\draw [color={rgb, 255:red, 208; green, 2; blue, 27 }  ,draw opacity=1 ]   (230.07,140.3) -- (230.07,200.47) ;
\draw [color={rgb, 255:red, 0; green, 0; blue, 0 }  ,draw opacity=1 ]   (229.73,200.13) -- (289.9,200.13) ;
\draw [color={rgb, 255:red, 208; green, 2; blue, 27 }  ,draw opacity=1 ]   (289.9,139.97) -- (289.9,200.13) ;
\draw [color={rgb, 255:red, 208; green, 2; blue, 27 }  ,draw opacity=1 ]   (330.73,139.97) -- (390.9,139.97) ;
\draw [color={rgb, 255:red, 0; green, 0; blue, 0 }  ,draw opacity=1 ]   (330.73,139.97) -- (330.73,200.13) ;
\draw [color={rgb, 255:red, 208; green, 2; blue, 27 }  ,draw opacity=1 ]   (330.4,199.8) -- (390.57,199.8) ;
\draw [color={rgb, 255:red, 0; green, 0; blue, 0 }  ,draw opacity=1 ]   (390.57,139.63) -- (390.57,199.8) ;

\draw (354,164.07) node [anchor=north west][inner sep=0.75pt]  [font=\footnotesize]  {$U$};

\end{tikzpicture}

\end{center}

\noindent and we denote by $H^1_C$ the (possibly weak) category formed by horizontal compositions of squares in $V^1_C$. Assuming we have defined $V^k_C$ and $H^k_C$ we make $V^{k+1}_C$ to be the category generated by morphisms in $H^k_C$ under vertical composition, and $H^{k+1}_C$ the (possibly weak) category generated by squares in $V^{k+1}_C$, under horizontal composition. The category of squares $C_1$ of $C$ satisfies the equation $C_1=\varinjlim V^k_C$. Then, we define the length $\ell C\in\mathbb{N}\cup\left\{\infty\right\}$ of any double category $C$ as the minimal $k$ such that the equation $\gamma C_1=V^k_{\gamma C}$ holds. Intuitively the vertical length of a double category $C$ measures the complexity of expressions of globularly generated squares in $C$. 

We further explain the vertical filtration construction pictorially: We regard the globular squares and horizontal identities of a double category $C$ as the simplest possible squares of $C$. We thus represent globular and horizontal identity squares diagrammatically as squares marked by 0, i.e. as:

\begin{center}

\tikzset{every picture/.style={line width=0.75pt}} 

\begin{tikzpicture}[x=0.75pt,y=0.75pt,yscale=-1,xscale=1]

\draw   (292.77,151.6) -- (369.82,151.6) -- (369.82,228.65) -- (292.77,228.65) -- cycle ;

\draw (324.95,185.52) node [anchor=north west][inner sep=0.75pt]  [font=\scriptsize,xscale=0.9,yscale=0.9]  {$0$};

\end{tikzpicture}

\end{center}

\noindent Below we write $\mathbb{G}$ for the collection of 0-marked squares. Observe that the collection of 0-marked squares is closed under horizontal composition. Squares in $V^1_C$ are those squares in $C$ admitting a subdivision as vertical composition of 0-marked squares. Diagramatically:

\begin{center}

\tikzset{every picture/.style={line width=0.75pt}} 

\begin{tikzpicture}[x=0.75pt,y=0.75pt,yscale=-1,xscale=1]

\draw   (290.34,171.28) -- (369.16,171.28) -- (369.16,250.09) -- (290.34,250.09) -- cycle ;
\draw    (290.41,190.71) -- (369.3,190.41) ;
\draw    (290.64,210.71) -- (368.86,210.66) ;
\draw    (290.19,230.49) -- (369.3,230.41) ;

\draw (324.45,177.54) node [anchor=north west][inner sep=0.75pt]  [font=\scriptsize,xscale=0.9,yscale=0.9]  {$0$};
\draw (324.34,195.76) node [anchor=north west][inner sep=0.75pt]  [font=\scriptsize,xscale=0.9,yscale=0.9]  {$0$};
\draw (323.9,236.43) node [anchor=north west][inner sep=0.75pt]  [font=\scriptsize,xscale=0.9,yscale=0.9]  {$0$};
\draw (333.41,211.46) node [anchor=north west][inner sep=0.75pt]  [font=\scriptsize,rotate=-90.3,xscale=0.9,yscale=0.9]  {$\cdots $};

\end{tikzpicture}

\end{center}

\noindent where we draw internal 0-marked squares as rectangles for convenience. If a square $\varphi$ as above is not a globular square or a horizontal identity, i.e. is not marked with 0, we mark $\varphi$ with 1. We represent 1-marked squares pictorially as:

\begin{center}

\tikzset{every picture/.style={line width=0.75pt}} 

\begin{tikzpicture}[x=0.75pt,y=0.75pt,yscale=-1,xscale=1]

\draw   (292.77,169.6) -- (369.82,169.6) -- (369.82,246.65) -- (292.77,246.65) -- cycle ;

\draw (324.95,203.52) node [anchor=north west][inner sep=0.75pt]  [font=\scriptsize,xscale=0.9,yscale=0.9]  {$1$};

\end{tikzpicture}

\end{center}

\noindent Squares in $H^1_C$ are thus those squares in $C$ that admit a subdivision as horizontal composition of squares marked with $i\leq 1$. Given two horizontally composable squares $\varphi,\psi$ in $V^1_C$ we might be able to find compatible vertical subdivisions of $\varphi$ and $\psi$ in 0-marked squares, i.e. we might be able to represent the horizontal composition of $\varphi$ and $\psi$ as:

\begin{center}

\tikzset{every picture/.style={line width=0.75pt}} 

\begin{tikzpicture}[x=0.75pt,y=0.75pt,yscale=-1,xscale=1]

\draw   (251.59,141) -- (330.41,141) -- (330.41,219.82) -- (251.59,219.82) -- cycle ;
\draw    (251.66,160.43) -- (330.55,160.14) ;
\draw    (251.89,180.43) -- (330.11,180.38) ;
\draw    (251.44,200.21) -- (330.55,200.14) ;
\draw   (331.09,141) -- (409.91,141) -- (409.91,219.82) -- (331.09,219.82) -- cycle ;
\draw    (331.16,160.43) -- (410.05,160.14) ;
\draw    (331.39,180.43) -- (409.61,180.38) ;
\draw    (330.94,200.21) -- (410.05,200.14) ;

\draw (285.7,147.26) node [anchor=north west][inner sep=0.75pt]  [font=\scriptsize,xscale=0.9,yscale=0.9]  {$0$};
\draw (285.59,165.48) node [anchor=north west][inner sep=0.75pt]  [font=\scriptsize,xscale=0.9,yscale=0.9]  {$0$};
\draw (285.15,206.15) node [anchor=north west][inner sep=0.75pt]  [font=\scriptsize,xscale=0.9,yscale=0.9]  {$0$};
\draw (294.66,181.18) node [anchor=north west][inner sep=0.75pt]  [font=\scriptsize,rotate=-90.3,xscale=0.9,yscale=0.9]  {$\cdots $};
\draw (365.2,147.26) node [anchor=north west][inner sep=0.75pt]  [font=\scriptsize,xscale=0.9,yscale=0.9]  {$0$};
\draw (365.09,165.48) node [anchor=north west][inner sep=0.75pt]  [font=\scriptsize,xscale=0.9,yscale=0.9]  {$0$};
\draw (364.65,206.15) node [anchor=north west][inner sep=0.75pt]  [font=\scriptsize,xscale=0.9,yscale=0.9]  {$0$};
\draw (374.16,181.18) node [anchor=north west][inner sep=0.75pt]  [font=\scriptsize,rotate=-90.3,xscale=0.9,yscale=0.9]  {$\cdots $};

\end{tikzpicture}

\end{center}

\noindent where the internal 0-marked squares of the left and right outer squares match. In that case we can use the exchange identity to re-arrange the above composition into a vertical subdivision of 0-marked squares. Example \cite[Example 4.1]{Orendain2} shows that this is not always the case and that there might exist horizontally composable squares $\varphi,\psi$ such that any two vertical subdivisions into 0-squares look like:

\begin{center}

\tikzset{every picture/.style={line width=0.75pt}} 

\begin{tikzpicture}[x=0.75pt,y=0.75pt,yscale=-1,xscale=1]

\draw   (271.59,161) -- (350.41,161) -- (350.41,239.82) -- (271.59,239.82) -- cycle ;
\draw    (271.66,180.43) -- (350.55,180.14) ;
\draw    (271.89,197.58) -- (350.11,197.53) ;
\draw    (271.44,220.21) -- (350.55,220.14) ;
\draw   (351.09,161) -- (429.91,161) -- (429.91,239.82) -- (351.09,239.82) -- cycle ;
\draw    (351.16,190.15) -- (430.27,190.7) ;
\draw    (349.8,211.35) -- (429.85,211.23) ;

\draw (305.7,167.26) node [anchor=north west][inner sep=0.75pt]  [font=\scriptsize,xscale=0.9,yscale=0.9]  {$0$};
\draw (305.59,183.48) node [anchor=north west][inner sep=0.75pt]  [font=\scriptsize,xscale=0.9,yscale=0.9]  {$0$};
\draw (305.15,226.15) node [anchor=north west][inner sep=0.75pt]  [font=\scriptsize,xscale=0.9,yscale=0.9]  {$0$};
\draw (314.66,201.18) node [anchor=north west][inner sep=0.75pt]  [font=\scriptsize,rotate=-90.3,xscale=0.9,yscale=0.9]  {$\cdots $};
\draw (384.63,171.83) node [anchor=north west][inner sep=0.75pt]  [font=\scriptsize,xscale=0.9,yscale=0.9]  {$0$};
\draw (385.22,222.15) node [anchor=north west][inner sep=0.75pt]  [font=\scriptsize,xscale=0.9,yscale=0.9]  {$0$};
\draw (394.44,192.61) node [anchor=north west][inner sep=0.75pt]  [font=\scriptsize,rotate=-90.3,xscale=0.9,yscale=0.9]  {$\cdots $};

\end{tikzpicture}

\end{center}

\noindent i.e. where the internal 0-squares cannot be arranged to match horizontally. Such horizontal compositions are not 1-marked. We represent squares in $H^1_C$ as above, i.e. squares in $H^1_C\setminus V^1_C$ as squares marked with 1+1/2, i.e. as:

\begin{center}

\tikzset{every picture/.style={line width=0.75pt}} 

\begin{tikzpicture}[x=0.75pt,y=0.75pt,yscale=-1,xscale=1]

\draw   (292.77,179.6) -- (369.82,179.6) -- (369.82,256.65) -- (292.77,256.65) -- cycle ;

\draw (310.95,214.72) node [anchor=north west][inner sep=0.75pt]  [font=\scriptsize,xscale=0.9,yscale=0.9]  {$1+1/2$};

\end{tikzpicture}

\end{center}

\noindent $V^2_C$ is thus the category of squares admitting a vertical subdivision into squares marked with $\leq 1+1/2$. Inductively, given $k\geq 1$, $V^k_C$ is the category of squares admitting vertical subdivisions as:

\begin{center}

\tikzset{every picture/.style={line width=0.75pt}} 

\begin{tikzpicture}[x=0.75pt,y=0.75pt,yscale=-1,xscale=1]

\draw   (292.34,180.25) -- (371.16,180.25) -- (371.16,259.07) -- (292.34,259.07) -- cycle ;
\draw    (292.41,199.68) -- (371.3,199.39) ;
\draw    (292.64,219.68) -- (370.86,219.63) ;
\draw    (292.19,239.46) -- (371.3,239.39) ;

\draw (326.45,186.51) node [anchor=north west][inner sep=0.75pt]  [font=\scriptsize,xscale=0.9,yscale=0.9]  {$i_{1}$};
\draw (326.34,204.73) node [anchor=north west][inner sep=0.75pt]  [font=\scriptsize,xscale=0.9,yscale=0.9]  {$i_{2}$};
\draw (325.9,245.4) node [anchor=north west][inner sep=0.75pt]  [font=\scriptsize,xscale=0.9,yscale=0.9]  {$i_{s}$};
\draw (335.41,220.43) node [anchor=north west][inner sep=0.75pt]  [font=\scriptsize,rotate=-90.3,xscale=0.9,yscale=0.9]  {$\cdots $};

\end{tikzpicture}

\end{center}

\noindent where the $i_j$'s are all $\leq k-1/2$. Squares marked with $k$ are squares in $V^k_C$ not marked with $i<k$. $H^{k+1}_C$ is the (possibly weak) category of squares admitting a horizontal subdivision as:

\begin{center}

\tikzset{every picture/.style={line width=0.75pt}} 

\begin{tikzpicture}[x=0.75pt,y=0.75pt,yscale=-1,xscale=1]

\draw   (369.66,181.75) -- (369.66,260.57) -- (290.84,260.57) -- (290.84,181.75) -- cycle ;
\draw    (350.22,181.82) -- (350.52,260.71) ;
\draw    (330.22,182.05) -- (330.27,260.27) ;
\draw    (310.45,181.6) -- (310.52,260.71) ;

\draw (295.45,214.51) node [anchor=north west][inner sep=0.75pt]  [font=\scriptsize,xscale=0.9,yscale=0.9]  {$i_{1}$};
\draw (314.34,214.23) node [anchor=north west][inner sep=0.75pt]  [font=\scriptsize,xscale=0.9,yscale=0.9]  {$i_{2}$};
\draw (355.4,216.4) node [anchor=north west][inner sep=0.75pt]  [font=\scriptsize,xscale=0.9,yscale=0.9]  {$i_{s}$};
\draw (349.23,226.06) node [anchor=north west][inner sep=0.75pt]  [font=\scriptsize,rotate=-180.3,xscale=0.9,yscale=0.9]  {$\cdots $};

\end{tikzpicture}

\end{center}

\noindent where the $i_j$'s are all $\leq k$. Squares marked with $k+1/2$ are those squares in $H^k_C$ such that no subdivision as above can be reduced as a vertical subdivision as $i$-squares with $i\leq k-1/2$. In \cite{Orendain2} it is shown that there exist globularly generated double categories such that squares marked with $k+1/2$ exist for every $k\geq 0$. The formula $C_1=\varinjlim V^k_C$ thus means that in a globularly generated double category $C$ every square admits a $\mathbb{N}+1/2\mathbb{N}$-marking as above. The length of a square $\varphi$ marked by $x\in \mathbb{N}+1/2\mathbb{N}$ is $\lceil x\rceil$ and the length $\ell C$ is the maximum of lengths of squares in $C$. The above pictorial representation is only meant to serve as intuition for the vertical filtration construction and we will not use it for the reminder of the paper.

\subsection{Assumptions}\label{ss:Assumptions}

\noindent We will assume, for the rest of the paper, that all double categories are strict, i.e. we will assume that horizontal composition is associative on the nose, and that unit squares act as units with respect to horizontal composition strictly. We will thus only consider decorated 2-categories and not general decorated bicategories, with some exceptions in subsection \ref{ss:endindexings}. We will denote by $\twocatast$ the subcategory of $\bcatast$ generated by decorated 2-categories. It is not difficult to see how the results of this paper could be extended to the general setting of decorated bicategories. This will be pursued in subsequent work.

\section{$\pi_2$-indexings and canonical decompositions}\label{s:mainthm}

\noindent Theorem \ref{thm:main} associates, to a certain type of structure defined on a decorated 2-category, an internalization of length 1. In this section we spell out what this structure is. We provide examples, and we then explain the main ideas behind how the proof of Theorem \ref{thm:main} works. The main argument for the proof of Theorem \ref{thm:main} is delayed until Section \ref{sec:internalizationsendindex}. 

\subsection{$\pi_2$-Indexings}\label{ss:endindexings}
\begin{definition}\label{def:opindexing}
Let $(B^*,B)\in\twocatast$. A \textbf{$\pi_2$-indexing} on $(B^*,B)$ is a functor
\[\indFunct\]
such that for every object $a$ in $\stB$, the monoid $\Phi(a)$ is the commutative monoid $\pi_2(B,a)$ of $a$, in $B$. A \textbf{$\pi_2$-opindexing} on $(B^*,B)$ is an indexing on $(B^{\ast op},B)$. We write $\pi_2\textbf{Ind}_{\decB}$ and $\pi_2\textbf{opInd}_{\decB}$ for the collection of $\pi_2$-indexings on $\decB$ and the collection of $\pi_2$-opindexings on $\decB$ respectively.
\end{definition}
\noindent $\pi_2$-indexings and $\pi_2$-opindexings are the types of structure we associate to decorated 2-categories in order to construct internalizations of length 1. We explain the ideas behind Definition \ref{def:opindexing} in \ref{ss:canonicalform}. We now present our main examples of $\pi_2$-indexings. 
\begin{ex}\label{ex:abeliangroups}
Let $A$ be an abelian group and let $G$ be a group. Consider the decorated 2-category $(\Omega G,2\Omega A)$, where $\Omega G$ denotes the delooping groupoid generated by $G$, i.e. $\Omega G$ is the groupoid with a single object $\bullet$, whose automorphism group is $G$, and where $2\Omega A$ is the 2-category with a single object $\bullet$, whose monoidal category of automorphisms is $\Omega A$. A $\pi_2$-indexing on the decorated bicategory $(\Omega G,2\Omega A)$ is a functor
\[\Phi:\Omega G\to \commMon\]
such that $\Phi(\bul)=A$. Given a morphism $g\in \Omega G$, i.e. given an element $g\in G$, the image $\Phi(g)$ of $g$, is an automorphism of $A$. $\Phi$ is thus precisely the data of a morphism $G\to Aut(A)$, i.e. $\Phi$ is an action by automorphisms $G\acts A$.
Analogously, a $\pi_2$-opindexing on $(\Omega G, 2\Omega A)$ is the data of a morphism $G^{op}\to Aut(A)$. 
\end{ex}

\begin{ex}\label{ex:tensorcats}
Let $\mathcal{C}$ be a tensor category over a field $\mathbb{K}$ and let $G$ be a group. Consider the decorated bicategory $(\Omega G, \Omega C)$, where $\Omega C$ is the bicategory with a single object $\bul$, whose category of endomorphisms is $C$. A $\pi_2$-indexing on the pair $(\Omega G, \Omega C)$ is a functor $\Phi: \Omega G \to \commMon$ such that $\Phi(\bul)= \text{End(1)} \cong \mathbb{K}$. Given an element $g\in G$, the image $\Phi(g)$ of $g$, is an automorphism of $\mathbb{K}$. Thus, $\Phi$ is precisely the data of a morphism $G\to \mathbb{K}^\times$, i.e. $\Phi$ is a character of $G$ on $\mathbb{K}$.
Analogously, a $\pi_2$-opindexing on $(\Omega G, \Omega \mathcal{C})$ is a character of $G^{op}$ on $\mathbb{K}$. 
\end{ex}

\begin{ex}
    Let $\textbf{Ring}^{\textbf{surj}}$ be the category of unital rings and surjective ring morphisms. Let $\textbf{Mod}$ be the bicategory of bimodules over unital rings. Consider the decorated bicategory $(\textbf{Ring}^{\textbf{surj}}, \textbf{Mod})$. Let $A$ be a ring. The monoid $\pi_2(\textbf{Mod}, A)$ is isomorphic to $Z(A)$. Now, if $f:A\to B$ is a surjective morphism, then $f(Z(A))\subset Z(B).$ The assignment $A\mapsto Z(A)$ and $f\mapsto f|_{Z(A)}$ assemble into a $\pi_2$-indexing on $(\textbf{Ring}^\textbf{surj}, \textbf{Mod})$: $$Z:\textbf{Ring}^\textbf{surj} \to \textbf{commMon}$$

    \noindent More generally, let $\textbf{Ring}^{\textbf{Z}}$ be the category of unital rings and ring morphisms that preserve centers. There is an  $\pi_2$-indexing on $(\textbf{Ring}^{\textbf{Z}}, \textbf{Mod})$: $$Z:\textbf{Ring}^{\textbf{Z}}\to \textbf{commMon}$$
\end{ex}\label{ex:spanscospansindexing}
\noindent The following are examples of decorated bicategories with a single $\pi_2$-indexing.
\begin{ex}
    Let $C$ be a category with enough pullbacks and let $\textbf{Span}(C)$ be the bicategory of spans over $C$. If $c$ is an object on $C$ there is only one endomorphism over the span
    
\begin{center}
\begin{tikzpicture}
\matrix(m)[matrix of math nodes, row sep=4em, column sep=2.5em,text height=1.5ex, text depth=0.25ex]
{c & c & c\\};
\path[->,font=\scriptsize,>=angle 90]
(m-1-2) edge node [above]{$id_c$} (m-1-1)
        edge node [above]{$id_c$} (m-1-3)
;
\end{tikzpicture}
\end{center}

\noindent i.e. $\pi_2(C,c)$ is trivial for every $c$ on $C$. Thus, there is only one $\pi_2$-indexing and only one $\pi_2$-opindexing on $(C,\textbf{Span}(C))$. Analogously, if $D$ is a category with enough pushouts and $\textbf{coSpan}(D)$ is the bicategory of cospans over $D$, then there is only one $\pi_2$-indexing and only one $\pi_2$-opindexing on $(D, \textbf{coSpan}(D)).$
\end{ex}

\begin{ex}\label{ex:bordismsindexings}
    Let $n\textbf{Man}$ denote the category of closed oriented $n$-manifolds and diffeomorphisms, and let $n\textbf{Cob}$ denote the bicategory of $(n+1)$-cobordisms with 2-cells being diffeomorphisms that fix tubular neighborhoods. If $X$ is a closed oriented n-manifold, then the identity of the cylinder $X\times [0,1]$ is the only element in $\pi_2(n\textbf{Cob},X)$. The decorated bicategory $(n\textbf{Man},n\textbf{Cob})$ thus admits only one indexing and one opindexing.
\end{ex}
\noindent The following is an example of a decorated 2-category not admitting $\pi_2$-indexings.
\begin{ex}\label{ex:noindexing}
Let $\stB$ be the groupoid generated by the free standing arrow
\begin{center}
\begin{tikzpicture}
\matrix(m)[matrix of math nodes, row sep=3em, column sep=3em,text height=1.5ex, text depth=0.25ex]
{\bullet\\
\bullet'\\};
\path[->,font=\scriptsize,>=angle 90]
(m-1-1) edge node [left]{$\alpha$} (m-2-1)

;
\end{tikzpicture}
\end{center}
\noindent i.e. let $B^\ast$ be the free living isomorphism, and let $B$ be the 2-category with two objects $\bullet,\bullet'$, no non-trivial 1-cells, and where $\pi_2(B,\bullet)$ and $\pi_2(B,\bullet')$ are not isomorphic commutative monoids. If there existed a $\Phi\in \EndCat$ or $\Phi\in\opIndCat$, then $\Phi(\alpha)$ would be an isomorphism between $\pi_2(\bullet,B)$ and $\pi_2(\bullet',B)$. 
    
\end{ex}

\subsection{Canonical decompositions}\label{ss:canonicalform}
\noindent The proof of Theorem \ref{thm:main} is presented in Section \ref{sec:internalizationsendindex}. In the reminder of this section we explain the main ideas behind considering $\pi_2$-indexings as the type of structure that would yield internalizations of length 1. 

All squares of length 1 in a globularly generated double category $C$ admit an expression as vertical composition of globular and unit squares. Moreover, in \cite[Lemma 4.6]{Orendain1} it is proven that every square 
\begin{center}

\tikzset{every picture/.style={line width=0.75pt}} 

\begin{tikzpicture}[x=0.75pt,y=0.75pt,yscale=-1,xscale=1]

\draw   (260.73,140.3) -- (320.9,140.3) -- (320.9,200.47) -- (260.73,200.47) -- cycle ;

\draw (283.92,164.91) node [anchor=north west][inner sep=0.75pt]  [font=\scriptsize]  {$\varphi $};
\draw (245.07,165.2) node [anchor=north west][inner sep=0.75pt]  [font=\scriptsize]  {$f$};
\draw (326.64,164.91) node [anchor=north west][inner sep=0.75pt]  [font=\scriptsize]  {$g$};
\draw (285.92,126.77) node [anchor=north west][inner sep=0.75pt]  [font=\scriptsize]  {$\alpha $};
\draw (285.92,205.26) node [anchor=north west][inner sep=0.75pt]  [font=\scriptsize]  {$\beta $};

\end{tikzpicture}

\end{center}
of length 1 in a double category $C$ can be written as a vertical composition of the form
\begin{equation}\label{eq:decomposition}
\varphi_{\downarrow}\boxminus U_{n+1}\boxminus\psi_n\boxminus\cdots\boxminus\psi_1\boxminus U_1\boxminus\varphi_{\uparrow}
\end{equation}
\noindent where the top and bottom squares $\varphi_{\uparrow}$ and $\varphi_{\downarrow}$ of the above decomposition are globular squares of the form
\begin{center}

\tikzset{every picture/.style={line width=0.75pt}} 

\begin{tikzpicture}[x=0.75pt,y=0.75pt,yscale=-1,xscale=1]

\draw [color={rgb, 255:red, 0; green, 0; blue, 0 }  ,draw opacity=1 ]   (250.07,160.3) -- (310.23,160.3) ;
\draw [color={rgb, 255:red, 208; green, 2; blue, 27 }  ,draw opacity=1 ]   (250.07,160.3) -- (250.07,220.47) ;
\draw [color={rgb, 255:red, 208; green, 2; blue, 27 }  ,draw opacity=1 ]   (249.73,220.13) -- (309.9,220.13) ;
\draw [color={rgb, 255:red, 208; green, 2; blue, 27 }  ,draw opacity=1 ]   (309.9,159.97) -- (309.9,220.13) ;
\draw [color={rgb, 255:red, 208; green, 2; blue, 27 }  ,draw opacity=1 ]   (350.73,159.97) -- (410.9,159.97) ;
\draw [color={rgb, 255:red, 208; green, 2; blue, 27 }  ,draw opacity=1 ]   (350.73,159.97) -- (350.73,220.13) ;
\draw [color={rgb, 255:red, 0; green, 0; blue, 0 }  ,draw opacity=1 ]   (350.4,219.8) -- (410.57,219.8) ;
\draw [color={rgb, 255:red, 208; green, 2; blue, 27 }  ,draw opacity=1 ]   (410.57,159.63) -- (410.57,219.8) ;

\draw (374,184.07) node [anchor=north west][inner sep=0.75pt]  [font=\footnotesize]  {$\varphi _{\downarrow }$};
\draw (272.57,183.92) node [anchor=north west][inner sep=0.75pt]  [font=\footnotesize]  {$\varphi _{\uparrow }$};
\draw (274.57,142.92) node [anchor=north west][inner sep=0.75pt]  [font=\footnotesize]  {$\alpha $};
\draw (375.57,222.4) node [anchor=north west][inner sep=0.75pt]  [font=\footnotesize]  {$\beta $};

\end{tikzpicture}

\end{center}
and where $U_1,....,U_{n+1}$ and $\psi_1,...,\psi_n$ are squares of the form
\begin{center}

\tikzset{every picture/.style={line width=0.75pt}} 

\begin{tikzpicture}[x=0.75pt,y=0.75pt,yscale=-1,xscale=1]

\draw [color={rgb, 255:red, 208; green, 2; blue, 27 }  ,draw opacity=1 ]   (250.07,160.3) -- (310.23,160.3) ;
\draw [color={rgb, 255:red, 208; green, 2; blue, 27 }  ,draw opacity=1 ]   (250.07,160.3) -- (250.07,220.47) ;
\draw [color={rgb, 255:red, 208; green, 2; blue, 27 }  ,draw opacity=1 ]   (249.73,220.13) -- (309.9,220.13) ;
\draw [color={rgb, 255:red, 208; green, 2; blue, 27 }  ,draw opacity=1 ]   (309.9,159.97) -- (309.9,220.13) ;
\draw [color={rgb, 255:red, 208; green, 2; blue, 27 }  ,draw opacity=1 ]   (149.73,159.97) -- (209.9,159.97) ;
\draw [color={rgb, 255:red, 0; green, 0; blue, 0 }  ,draw opacity=1 ]   (149.73,159.97) -- (149.73,220.13) ;
\draw [color={rgb, 255:red, 208; green, 2; blue, 27 }  ,draw opacity=1 ]   (149.4,219.8) -- (209.57,219.8) ;
\draw [color={rgb, 255:red, 0; green, 0; blue, 0 }  ,draw opacity=1 ]   (209.57,159.63) -- (209.57,219.8) ;

\draw (173,184.07) node [anchor=north west][inner sep=0.75pt]  [font=\footnotesize]  {$U_{i}$};
\draw (274,184.07) node [anchor=north west][inner sep=0.75pt]  [font=\footnotesize]  {$\psi _{i}$};

\end{tikzpicture}

\end{center}
i.e. where the $U_i$'s in the middle of the decomposition are unit squares, and the $\psi_i$'s are squares in $\pi_2(C,a_i)$ for some $a_i$. The above decomposition can be summarized as
\begin{center}

\tikzset{every picture/.style={line width=0.75pt}} 

\begin{tikzpicture}[x=0.75pt,y=0.75pt,yscale=-1,xscale=1]

\draw [color={rgb, 255:red, 0; green, 0; blue, 0 }  ,draw opacity=1 ]   (191.07,60.3) -- (251.23,60.3) ;
\draw [color={rgb, 255:red, 208; green, 2; blue, 27 }  ,draw opacity=1 ]   (191.07,60.3) -- (191.07,120.47) ;
\draw [color={rgb, 255:red, 208; green, 2; blue, 27 }  ,draw opacity=1 ]   (190.73,120.13) -- (250.9,120.13) ;
\draw [color={rgb, 255:red, 208; green, 2; blue, 27 }  ,draw opacity=1 ]   (250.9,59.97) -- (250.9,120.13) ;
\draw [color={rgb, 255:red, 208; green, 2; blue, 27 }  ,draw opacity=1 ]   (190.73,179.97) -- (250.9,179.97) ;
\draw [color={rgb, 255:red, 208; green, 2; blue, 27 }  ,draw opacity=1 ]   (190.73,179.97) -- (190.73,240.13) ;
\draw [color={rgb, 255:red, 0; green, 0; blue, 0 }  ,draw opacity=1 ]   (190.4,239.8) -- (250.57,239.8) ;
\draw [color={rgb, 255:red, 208; green, 2; blue, 27 }  ,draw opacity=1 ]   (250.57,179.63) -- (250.57,239.8) ;
\draw [color={rgb, 255:red, 0; green, 0; blue, 0 }  ,draw opacity=1 ]   (191.07,120.47) -- (191.07,180.63) ;
\draw [color={rgb, 255:red, 0; green, 0; blue, 0 }  ,draw opacity=1 ]   (250.9,120.13) -- (250.9,180.3) ;

\draw (214,204.07) node [anchor=north west][inner sep=0.75pt]  [font=\footnotesize]  {$\varphi _{\downarrow }$};
\draw (213.57,83.92) node [anchor=north west][inner sep=0.75pt]  [font=\footnotesize]  {$\varphi _{\uparrow }$};

\end{tikzpicture}

\end{center}
\noindent where the middle square is a vertical composition of squares of the form $U_i$ and $\psi_i$ as above. The simplest possible form of such decompositions is a decomposition of the form
\begin{center}

\tikzset{every picture/.style={line width=0.75pt}} 

\begin{tikzpicture}[x=0.75pt,y=0.75pt,yscale=-1,xscale=1]

\draw [color={rgb, 255:red, 0; green, 0; blue, 0 }  ,draw opacity=1 ]   (211.07,80.3) -- (271.23,80.3) ;
\draw [color={rgb, 255:red, 208; green, 2; blue, 27 }  ,draw opacity=1 ]   (211.07,80.3) -- (211.07,140.47) ;
\draw [color={rgb, 255:red, 208; green, 2; blue, 27 }  ,draw opacity=1 ]   (210.73,140.13) -- (270.9,140.13) ;
\draw [color={rgb, 255:red, 208; green, 2; blue, 27 }  ,draw opacity=1 ]   (270.9,79.97) -- (270.9,140.13) ;
\draw [color={rgb, 255:red, 208; green, 2; blue, 27 }  ,draw opacity=1 ]   (210.73,199.97) -- (270.9,199.97) ;
\draw [color={rgb, 255:red, 208; green, 2; blue, 27 }  ,draw opacity=1 ]   (210.73,199.97) -- (210.73,260.13) ;
\draw [color={rgb, 255:red, 0; green, 0; blue, 0 }  ,draw opacity=1 ]   (210.4,259.8) -- (270.57,259.8) ;
\draw [color={rgb, 255:red, 208; green, 2; blue, 27 }  ,draw opacity=1 ]   (270.57,199.63) -- (270.57,259.8) ;
\draw [color={rgb, 255:red, 0; green, 0; blue, 0 }  ,draw opacity=1 ]   (211.07,140.47) -- (211.07,200.63) ;
\draw [color={rgb, 255:red, 0; green, 0; blue, 0 }  ,draw opacity=1 ]   (270.9,140.13) -- (270.9,200.3) ;

\draw (234,224.07) node [anchor=north west][inner sep=0.75pt]  [font=\footnotesize]  {$\varphi _{\downarrow }$};
\draw (233.57,103.92) node [anchor=north west][inner sep=0.75pt]  [font=\footnotesize]  {$\varphi _{\uparrow }$};
\draw (233.57,162.92) node [anchor=north west][inner sep=0.75pt]  [font=\footnotesize]  {$U$};

\end{tikzpicture}

\end{center}
\noindent Not every square of length 1 in a globularly generated double category admits a simple expression as above. If a square $\varphi$ in a double category $C$ admits an expression as above, we will say that $\varphi$ admits a \textbf{canonical decomposition}, and we will call any decomposition as above a canonical decomposition for $\varphi$. We prove the following lemma.

\begin{lem}\label{lem:canonicalform}
    Let $C$ be a double category. If every non globular square $\varphi$ in $C$, of length 1, admits a canonical decomposition, then $\ell C=1$.
\end{lem}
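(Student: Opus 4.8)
The plan is to establish the equality $\gamma C_1=V^1_{\gamma C}$; as $1$ is the minimum possible length of a double category, this is precisely the statement $\ell C=1$. Since $V^1_{\gamma C}\subseteq\gamma C_1$ always and $\gamma C_1=\varinjlim_k V^k_{\gamma C}$, it suffices to prove that $V^1_{\gamma C}$ is closed under horizontal composition. Granting this, $H^1_{\gamma C}=V^1_{\gamma C}$; and since $V^1_{\gamma C}$ is, by definition, a category under vertical composition, the recursive descriptions of $V^k_{\gamma C}$ and $H^k_{\gamma C}$ give, by induction on $k$, that $V^k_{\gamma C}=H^k_{\gamma C}=V^1_{\gamma C}$ for every $k\geq 1$, whence $\gamma C_1=\varinjlim_k V^k_{\gamma C}=V^1_{\gamma C}$.

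The heart of the proof is thus to show that, for horizontally composable squares $\varphi,\psi\in V^1_{\gamma C}$, the composite $\varphi\boxvert\psi$ lies in $V^1_{\gamma C}$. The first observation is that every square in $V^1_{\gamma C}$ admits a canonical decomposition: such a square is either globular, a unit square, or a non-globular square of length $1$; in the first two cases a canonical decomposition is available trivially (take the remaining two factors to be identity squares), and in the last case it is available by hypothesis. So write $\varphi=\varphi_\downarrow\boxminus U\boxminus\varphi_\uparrow$ and $\psi=\psi_\downarrow\boxminus U'\boxminus\psi_\uparrow$ with $\varphi_\uparrow,\varphi_\downarrow,\psi_\uparrow,\psi_\downarrow$ globular and $U,U'$ unit squares. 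Applying the interchange law twice yields
\[\varphi\boxvert\psi=(\varphi_\downarrow\boxvert\psi_\downarrow)\boxminus(U\boxvert U')\boxminus(\varphi_\uparrow\boxvert\psi_\uparrow).\]
A horizontal composite of two globular squares is globular, and, since $\varphi\boxvert\psi$ is defined, the right frame of $U$ coincides with the left frame of $U'$, forcing $U\boxvert U'$ to be a unit square again. Hence $\varphi\boxvert\psi$ is displayed as a vertical composite of a globular, a unit, and a globular square, i.e. as an element of $V^1_{\gamma C}$; indeed this exhibits a canonical decomposition of $\varphi\boxvert\psi$. All three factors lie in $\gamma C$, because $\gamma C$ contains every globular square and every unit square of $C$.

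The main obstacle I anticipate is the bookkeeping in the application of the interchange law: one must verify that the chosen canonical decompositions of $\varphi$ and $\psi$ are compatible \emph{row by row}, that is, that the vertical arrows and boundary $1$-cells along the shared edge of $\varphi$ and $\psi$ distribute consistently over the three factors, so that each of $\varphi_\downarrow\boxvert\psi_\downarrow$, $U\boxvert U'$, $\varphi_\uparrow\boxvert\psi_\uparrow$ is actually defined and the exchange identity applies. A subsidiary point, requiring care but no new ideas, is to treat the degenerate shapes — where $\varphi$ or $\psi$ is itself globular or a unit square — uniformly inside this scheme, and to carry out cleanly the concluding induction identifying all the $V^k_{\gamma C}$ with $V^1_{\gamma C}$.
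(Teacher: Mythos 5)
Your argument is correct and matches the paper's: both reduce the claim to closure of the length-one squares under horizontal composition, and both establish that closure by applying the interchange law to canonical decompositions so that the unit layers compose to a unit square and the globular layers compose to globular squares. You are somewhat more explicit about the degenerate cases and about the concluding induction from $H^1_{\gamma C}=V^1_{\gamma C}$ to $\gamma C_1=V^1_{\gamma C}$, but the substance is the same.
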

\begin{proof}
    We prove that the collection of globularly generated non globular squares of length 1 of $C$ is closed under the operation of horizontal composition. Let $\varphi,\varphi'$ be horizontally composable globularly generated non globular squares of length 1 in $C$. Represent $\varphi$ and $\varphi'$ in canonical form
   \begin{center}

\tikzset{every picture/.style={line width=0.75pt}} 

\begin{tikzpicture}[x=0.75pt,y=0.75pt,yscale=-1,xscale=1]

\draw [color={rgb, 255:red, 0; green, 0; blue, 0 }  ,draw opacity=1 ]   (231.07,100.3) -- (291.23,100.3) ;
\draw [color={rgb, 255:red, 208; green, 2; blue, 27 }  ,draw opacity=1 ]   (231.07,100.3) -- (231.07,160.47) ;
\draw [color={rgb, 255:red, 208; green, 2; blue, 27 }  ,draw opacity=1 ]   (230.73,160.13) -- (290.9,160.13) ;
\draw [color={rgb, 255:red, 208; green, 2; blue, 27 }  ,draw opacity=1 ]   (290.9,99.97) -- (290.9,160.13) ;
\draw [color={rgb, 255:red, 208; green, 2; blue, 27 }  ,draw opacity=1 ]   (230.73,219.97) -- (290.9,219.97) ;
\draw [color={rgb, 255:red, 208; green, 2; blue, 27 }  ,draw opacity=1 ]   (230.73,219.97) -- (230.73,280.13) ;
\draw [color={rgb, 255:red, 0; green, 0; blue, 0 }  ,draw opacity=1 ]   (230.4,279.8) -- (290.57,279.8) ;
\draw [color={rgb, 255:red, 208; green, 2; blue, 27 }  ,draw opacity=1 ]   (290.57,219.63) -- (290.57,279.8) ;
\draw [color={rgb, 255:red, 0; green, 0; blue, 0 }  ,draw opacity=1 ]   (231.07,160.47) -- (231.07,220.63) ;
\draw [color={rgb, 255:red, 0; green, 0; blue, 0 }  ,draw opacity=1 ]   (290.9,160.13) -- (290.9,220.3) ;
\draw [color={rgb, 255:red, 0; green, 0; blue, 0 }  ,draw opacity=1 ]   (370.07,100.8) -- (430.23,100.8) ;
\draw [color={rgb, 255:red, 208; green, 2; blue, 27 }  ,draw opacity=1 ]   (370.07,100.8) -- (370.07,160.97) ;
\draw [color={rgb, 255:red, 208; green, 2; blue, 27 }  ,draw opacity=1 ]   (369.73,160.63) -- (429.9,160.63) ;
\draw [color={rgb, 255:red, 208; green, 2; blue, 27 }  ,draw opacity=1 ]   (429.9,100.47) -- (429.9,160.63) ;
\draw [color={rgb, 255:red, 208; green, 2; blue, 27 }  ,draw opacity=1 ]   (369.73,220.47) -- (429.9,220.47) ;
\draw [color={rgb, 255:red, 208; green, 2; blue, 27 }  ,draw opacity=1 ]   (369.73,220.47) -- (369.73,280.63) ;
\draw [color={rgb, 255:red, 0; green, 0; blue, 0 }  ,draw opacity=1 ]   (369.4,280.3) -- (429.57,280.3) ;
\draw [color={rgb, 255:red, 208; green, 2; blue, 27 }  ,draw opacity=1 ]   (429.57,220.13) -- (429.57,280.3) ;
\draw [color={rgb, 255:red, 0; green, 0; blue, 0 }  ,draw opacity=1 ]   (370.07,160.97) -- (370.07,221.13) ;
\draw [color={rgb, 255:red, 0; green, 0; blue, 0 }  ,draw opacity=1 ]   (429.9,160.63) -- (429.9,220.8) ;

\draw (254,244.07) node [anchor=north west][inner sep=0.75pt]  [font=\footnotesize]  {$\varphi _{\downarrow }$};
\draw (253.57,123.92) node [anchor=north west][inner sep=0.75pt]  [font=\footnotesize]  {$\varphi _{\uparrow }$};
\draw (253.57,182.92) node [anchor=north west][inner sep=0.75pt]  [font=\footnotesize]  {$U$};
\draw (393,244.57) node [anchor=north west][inner sep=0.75pt]  [font=\footnotesize]  {$\varphi '_{\downarrow }$};
\draw (392.57,124.42) node [anchor=north west][inner sep=0.75pt]  [font=\footnotesize]  {$\varphi '_{\uparrow }$};
\draw (392.57,183.42) node [anchor=north west][inner sep=0.75pt]  [font=\footnotesize]  {$U$};

\end{tikzpicture}

   \end{center}
    \noindent The condition that $\varphi,\varphi'$ are horizontally composable means that the black edges of $\varphi$ and $\varphi'$ are equal. The horizontal composition $\varphi\boxvert\varphi'$ is the composition
   \begin{center}

\tikzset{every picture/.style={line width=0.75pt}} 

\begin{tikzpicture}[x=0.75pt,y=0.75pt,yscale=-1,xscale=1]

\draw [color={rgb, 255:red, 0; green, 0; blue, 0 }  ,draw opacity=1 ]   (251.07,109.67) -- (311.23,109.67) ;
\draw [color={rgb, 255:red, 208; green, 2; blue, 27 }  ,draw opacity=1 ]   (251.07,109.67) -- (251.07,169.83) ;
\draw [color={rgb, 255:red, 208; green, 2; blue, 27 }  ,draw opacity=1 ]   (250.73,169.5) -- (310.9,169.5) ;
\draw [color={rgb, 255:red, 208; green, 2; blue, 27 }  ,draw opacity=1 ]   (310.9,109.33) -- (310.9,169.5) ;
\draw [color={rgb, 255:red, 208; green, 2; blue, 27 }  ,draw opacity=1 ]   (250.73,229.33) -- (310.9,229.33) ;
\draw [color={rgb, 255:red, 208; green, 2; blue, 27 }  ,draw opacity=1 ]   (250.73,229.33) -- (250.73,289.5) ;
\draw [color={rgb, 255:red, 0; green, 0; blue, 0 }  ,draw opacity=1 ]   (250.23,289.67) -- (310.4,289.67) ;
\draw [color={rgb, 255:red, 208; green, 2; blue, 27 }  ,draw opacity=1 ]   (310.57,229) -- (310.57,289.17) ;
\draw [color={rgb, 255:red, 0; green, 0; blue, 0 }  ,draw opacity=1 ]   (251.07,169.83) -- (251.07,230) ;
\draw [color={rgb, 255:red, 0; green, 0; blue, 0 }  ,draw opacity=1 ]   (310.9,169.5) -- (310.9,229.67) ;
\draw [color={rgb, 255:red, 0; green, 0; blue, 0 }  ,draw opacity=1 ]   (311.23,109.67) -- (371.4,109.67) ;
\draw [color={rgb, 255:red, 208; green, 2; blue, 27 }  ,draw opacity=1 ]   (310.9,169.5) -- (371.07,169.5) ;
\draw [color={rgb, 255:red, 208; green, 2; blue, 27 }  ,draw opacity=1 ]   (370.9,109.83) -- (370.9,170) ;
\draw [color={rgb, 255:red, 208; green, 2; blue, 27 }  ,draw opacity=1 ]   (310.9,229.33) -- (371.07,229.33) ;
\draw [color={rgb, 255:red, 0; green, 0; blue, 0 }  ,draw opacity=1 ]   (310.4,289.67) -- (370.57,289.67) ;
\draw [color={rgb, 255:red, 208; green, 2; blue, 27 }  ,draw opacity=1 ]   (370.57,229.5) -- (370.57,289.67) ;
\draw [color={rgb, 255:red, 0; green, 0; blue, 0 }  ,draw opacity=1 ]   (370.9,170) -- (370.9,230.17) ;

\draw (274,253.43) node [anchor=north west][inner sep=0.75pt]  [font=\footnotesize]  {$\varphi _{\downarrow }$};
\draw (273.57,133.29) node [anchor=north west][inner sep=0.75pt]  [font=\footnotesize]  {$\varphi _{\uparrow }$};
\draw (273.57,192.29) node [anchor=north west][inner sep=0.75pt]  [font=\footnotesize]  {$U$};
\draw (334,253.93) node [anchor=north west][inner sep=0.75pt]  [font=\footnotesize]  {$\varphi '_{\downarrow }$};
\draw (333.57,133.79) node [anchor=north west][inner sep=0.75pt]  [font=\footnotesize]  {$\varphi '_{\uparrow }$};
\draw (333.57,192.79) node [anchor=north west][inner sep=0.75pt]  [font=\footnotesize]  {$U$};

\end{tikzpicture}

   \end{center}
    \noindent The horiozntal composition of two horizontal identity squares is a horizontal identity square. The above composite square is thus equal to
    \begin{center}

\tikzset{every picture/.style={line width=0.75pt}} 

\begin{tikzpicture}[x=0.75pt,y=0.75pt,yscale=-1,xscale=1]

\draw [color={rgb, 255:red, 0; green, 0; blue, 0 }  ,draw opacity=1 ]   (251.07,110.17) -- (311.23,110.17) ;
\draw [color={rgb, 255:red, 208; green, 2; blue, 27 }  ,draw opacity=1 ]   (251.07,110.17) -- (251.07,170.33) ;
\draw [color={rgb, 255:red, 208; green, 2; blue, 27 }  ,draw opacity=1 ]   (250.73,170) -- (310.9,170) ;
\draw [color={rgb, 255:red, 208; green, 2; blue, 27 }  ,draw opacity=1 ]   (310.9,109.83) -- (310.9,170) ;
\draw [color={rgb, 255:red, 208; green, 2; blue, 27 }  ,draw opacity=1 ]   (250.73,229.83) -- (310.9,229.83) ;
\draw [color={rgb, 255:red, 208; green, 2; blue, 27 }  ,draw opacity=1 ]   (250.73,229.83) -- (250.73,290) ;
\draw [color={rgb, 255:red, 0; green, 0; blue, 0 }  ,draw opacity=1 ]   (250.4,289.67) -- (310.57,289.67) ;
\draw [color={rgb, 255:red, 208; green, 2; blue, 27 }  ,draw opacity=1 ]   (310.57,229.5) -- (310.57,289.67) ;
\draw [color={rgb, 255:red, 0; green, 0; blue, 0 }  ,draw opacity=1 ]   (251.07,170.33) -- (251.07,230.5) ;
\draw [color={rgb, 255:red, 0; green, 0; blue, 0 }  ,draw opacity=1 ]   (310.9,170) -- (310.9,230.17) ;

\draw (256.67,252.93) node [anchor=north west][inner sep=0.75pt]  [font=\footnotesize]  {$\varphi _{\downarrow } \boxvert \varphi '_{\downarrow }$};
\draw (257.24,131.79) node [anchor=north west][inner sep=0.75pt]  [font=\footnotesize]  {$\varphi _{\uparrow } \boxvert \varphi '_{\uparrow }$};
\draw (273.57,192.79) node [anchor=north west][inner sep=0.75pt]  [font=\footnotesize]  {$U$};

\end{tikzpicture}

    \end{center}
    \noindent The horizontal composites $\varphi_\uparrow\boxvert\varphi'_\uparrow$ and $\varphi_\downarrow\boxvert\varphi'_\downarrow$ are globular squares, and thus the above square is of length 1. We conclude that the double category $C$ is of length 1.

\end{proof}

\subsection{Obtaining canonical decomposition}\label{ss:canonicaldecompositions}
\noindent Lemma \ref{lem:canonicalform} provides us with a strategy for proving that a double category $C$ is of length 1: To prove that all the globularly generated squares of $C$ admit a canonical decomposition. The proof of Theorem \ref{thm:main} describes the construction of a double category $\crossb$ associated to $\Phi\in\EndCat$ for a decorated 2-category $\stB$, such that every globular square in $\crossb$ has a canonical decomposition, and thus is of length 1. We describe the idea behind the fact that $\pi_2$-indexings and $\pi_2$-opindexing make our construction work. In order to provide every length 1 non-globular, globularly generated square $\varphi$ of $C$ with a canonical decomposition, we would like to be able to turn every vertical composition of the form (\ref{eq:decomposition}) into a canonical decomposition. The mid-part of a decomposition of the form (\ref{eq:decomposition}) is a vertical composition of squares of the form:
\begin{center}

\tikzset{every picture/.style={line width=0.75pt}} 

\begin{tikzpicture}[x=0.75pt,y=0.75pt,yscale=-1,xscale=1]

\draw [color={rgb, 255:red, 208; green, 2; blue, 27 }  ,draw opacity=1 ]   (211.07,80.3) -- (271.23,80.3) ;
\draw [color={rgb, 255:red, 208; green, 2; blue, 27 }  ,draw opacity=1 ]   (211.07,80.3) -- (211.07,140.47) ;
\draw [color={rgb, 255:red, 208; green, 2; blue, 27 }  ,draw opacity=1 ]   (210.73,140.13) -- (270.9,140.13) ;
\draw [color={rgb, 255:red, 208; green, 2; blue, 27 }  ,draw opacity=1 ]   (270.9,79.97) -- (270.9,140.13) ;
\draw [color={rgb, 255:red, 208; green, 2; blue, 27 }  ,draw opacity=1 ]   (210.73,199.97) -- (270.9,199.97) ;
\draw [color={rgb, 255:red, 0; green, 0; blue, 0 }  ,draw opacity=1 ]   (211.07,140.47) -- (211.07,200.63) ;
\draw [color={rgb, 255:red, 0; green, 0; blue, 0 }  ,draw opacity=1 ]   (270.9,140.13) -- (270.9,200.3) ;

\draw (233.57,103.92) node [anchor=north west][inner sep=0.75pt]  [font=\footnotesize]  {$\varphi $};
\draw (234.57,163.92) node [anchor=north west][inner sep=0.75pt]  [font=\footnotesize]  {$U$};

\end{tikzpicture}

\end{center}
\noindent If we can turn squares as above into squares of the form
\begin{center}

\tikzset{every picture/.style={line width=0.75pt}} 

\begin{tikzpicture}[x=0.75pt,y=0.75pt,yscale=-1,xscale=1]

\draw [color={rgb, 255:red, 208; green, 2; blue, 27 }  ,draw opacity=1 ]   (231.07,220.3) -- (291.23,220.3) ;
\draw [color={rgb, 255:red, 208; green, 2; blue, 27 }  ,draw opacity=1 ]   (231.07,220.3) -- (231.07,160.13) ;
\draw [color={rgb, 255:red, 208; green, 2; blue, 27 }  ,draw opacity=1 ]   (230.73,160.47) -- (290.9,160.47) ;
\draw [color={rgb, 255:red, 208; green, 2; blue, 27 }  ,draw opacity=1 ]   (290.9,220.63) -- (290.9,160.47) ;
\draw [color={rgb, 255:red, 208; green, 2; blue, 27 }  ,draw opacity=1 ]   (230.73,100.63) -- (290.9,100.63) ;
\draw [color={rgb, 255:red, 0; green, 0; blue, 0 }  ,draw opacity=1 ]   (231.07,160.13) -- (231.07,99.97) ;
\draw [color={rgb, 255:red, 0; green, 0; blue, 0 }  ,draw opacity=1 ]   (290.9,160.47) -- (290.9,100.3) ;

\draw (254.4,184.4) node [anchor=north west][inner sep=0.75pt]  [font=\footnotesize]  {$\psi $};
\draw (254.8,127.8) node [anchor=north west][inner sep=0.75pt]  [font=\footnotesize]  {$U$};

\end{tikzpicture}

\end{center}
\noindent for some other square $\psi$, i.e. if we can slide red boundary squares down along unit squares, then we can, in decomposition (\ref{eq:decomposition}), inductively slide all red boundary squares down all the unit squares of the vertical composition and obtain a composite square of the form
\begin{center}

\tikzset{every picture/.style={line width=0.75pt}} 

\begin{tikzpicture}[x=0.75pt,y=0.75pt,yscale=-1,xscale=1]

\draw [color={rgb, 255:red, 208; green, 2; blue, 27 }  ,draw opacity=1 ]   (251.07,240.3) -- (311.23,240.3) ;
\draw [color={rgb, 255:red, 208; green, 2; blue, 27 }  ,draw opacity=1 ]   (251.07,240.3) -- (251.07,180.13) ;
\draw [color={rgb, 255:red, 208; green, 2; blue, 27 }  ,draw opacity=1 ]   (250.73,180.47) -- (310.9,180.47) ;
\draw [color={rgb, 255:red, 208; green, 2; blue, 27 }  ,draw opacity=1 ]   (310.9,240.63) -- (310.9,180.47) ;
\draw [color={rgb, 255:red, 208; green, 2; blue, 27 }  ,draw opacity=1 ]   (250.73,120.63) -- (310.9,120.63) ;
\draw [color={rgb, 255:red, 0; green, 0; blue, 0 }  ,draw opacity=1 ]   (251.07,180.13) -- (251.07,119.97) ;
\draw [color={rgb, 255:red, 0; green, 0; blue, 0 }  ,draw opacity=1 ]   (310.9,180.47) -- (310.9,120.3) ;
\draw [color={rgb, 255:red, 208; green, 2; blue, 27 }  ,draw opacity=1 ]   (251.07,300.3) -- (251.07,240.13) ;
\draw [color={rgb, 255:red, 208; green, 2; blue, 27 }  ,draw opacity=1 ]   (310.9,300.63) -- (310.9,240.47) ;
\draw    (251.07,300.3) -- (310.9,300.63) ;
\draw [color={rgb, 255:red, 208; green, 2; blue, 27 }  ,draw opacity=1 ]   (251.07,119.94) -- (251.07,59.77) ;
\draw [color={rgb, 255:red, 208; green, 2; blue, 27 }  ,draw opacity=1 ]   (310.9,120.27) -- (310.9,60.11) ;
\draw    (251.07,59.77) -- (310.9,60.11) ;

\draw (274.4,204.4) node [anchor=north west][inner sep=0.75pt]  [font=\footnotesize]  {$\Theta $};
\draw (274.8,147.8) node [anchor=north west][inner sep=0.75pt]  [font=\footnotesize]  {$U$};
\draw (273.67,263.49) node [anchor=north west][inner sep=0.75pt]  [font=\footnotesize]  {$\varphi _{\downarrow }$};
\draw (273.67,83.13) node [anchor=north west][inner sep=0.75pt]  [font=\footnotesize]  {$\varphi _{\uparrow }$};

\end{tikzpicture}

\end{center}
\noindent where $\Theta$ is the vertical composition of all the squares resulting of sliding all red-boundary squares in decomposition (\ref{eq:decomposition}) down unit squares. The vertical composition of the two squares at the bottom of the above decomposition is a square of the form
\begin{center}

\tikzset{every picture/.style={line width=0.75pt}} 

\begin{tikzpicture}[x=0.75pt,y=0.75pt,yscale=-1,xscale=1]

\draw [color={rgb, 255:red, 208; green, 2; blue, 27 }  ,draw opacity=1 ]   (370.73,169.97) -- (430.9,169.97) ;
\draw [color={rgb, 255:red, 208; green, 2; blue, 27 }  ,draw opacity=1 ]   (370.73,169.97) -- (370.73,230.13) ;
\draw [color={rgb, 255:red, 0; green, 0; blue, 0 }  ,draw opacity=1 ]   (370.4,229.8) -- (430.57,229.8) ;
\draw [color={rgb, 255:red, 208; green, 2; blue, 27 }  ,draw opacity=1 ]   (430.57,169.63) -- (430.57,229.8) ;

\end{tikzpicture}

\end{center}
\noindent and thus the above decomposition is canonical. $\pi_2$-indexings are precisely the type of structure that allows us to apply the above procedure and thus obtain canonical decomposition for globularly generated squares of length 1. Another way of obtaining canonial decompositions for squares as above is by sliding red-boundary squares up unit squares, as opposed to sliding down. $\pi_2$-opindexings are the structure implementing the operation of sliding squares up.

\subsection{Vertical composition and canonical decompositions}

\noindent Another problem we consider with respect to canonical decompositions is how to express the vertical composition of two non-globular, globularly generated squares, admitting canonical decompositions, as a canonical decomposition. Given two vertically compatible squares
\begin{center}

\tikzset{every picture/.style={line width=0.75pt}} 

\begin{tikzpicture}[x=0.75pt,y=0.75pt,yscale=-1,xscale=1]

\draw [color={rgb, 255:red, 0; green, 0; blue, 0 }  ,draw opacity=1 ]   (251.07,141.3) -- (311.23,141.3) ;
\draw [color={rgb, 255:red, 0; green, 0; blue, 0 }  ,draw opacity=1 ]   (251.07,141.3) -- (251.07,201.47) ;
\draw [color={rgb, 255:red, 0; green, 0; blue, 0 }  ,draw opacity=1 ]   (250.73,201.13) -- (310.9,201.13) ;
\draw [color={rgb, 255:red, 0; green, 0; blue, 0 }  ,draw opacity=1 ]   (310.9,140.97) -- (310.9,201.13) ;
\draw [color={rgb, 255:red, 0; green, 0; blue, 0 }  ,draw opacity=1 ]   (351.73,140.97) -- (411.9,140.97) ;
\draw [color={rgb, 255:red, 0; green, 0; blue, 0 }  ,draw opacity=1 ]   (351.73,140.97) -- (351.73,201.13) ;
\draw [color={rgb, 255:red, 0; green, 0; blue, 0 }  ,draw opacity=1 ]   (351.4,200.8) -- (411.57,200.8) ;
\draw [color={rgb, 255:red, 0; green, 0; blue, 0 }  ,draw opacity=1 ]   (411.57,140.63) -- (411.57,200.8) ;

\draw (375,165.07) node [anchor=north west][inner sep=0.75pt]  [font=\footnotesize]  {$\psi $};
\draw (273.57,164.92) node [anchor=north west][inner sep=0.75pt]  [font=\footnotesize]  {$\varphi $};
\draw (275.57,123.92) node [anchor=north west][inner sep=0.75pt]  [font=\footnotesize]  {$\alpha $};
\draw (275.57,203.4) node [anchor=north west][inner sep=0.75pt]  [font=\footnotesize]  {$\beta $};
\draw (374.57,123.92) node [anchor=north west][inner sep=0.75pt]  [font=\footnotesize]  {$\beta $};
\draw (375.57,202.92) node [anchor=north west][inner sep=0.75pt]  [font=\footnotesize]  {$\gamma $};
\draw (236.57,163.92) node [anchor=north west][inner sep=0.75pt]  [font=\footnotesize]  {$f$};
\draw (314.57,163.92) node [anchor=north west][inner sep=0.75pt]  [font=\footnotesize]  {$f$};
\draw (339.57,164.92) node [anchor=north west][inner sep=0.75pt]  [font=\footnotesize]  {$g$};
\draw (416.57,164.92) node [anchor=north west][inner sep=0.75pt]  [font=\footnotesize]  {$g$};

\end{tikzpicture}

\end{center}
\noindent Assume both $\varphi$ and $\psi$ admit canonical decompositions
\begin{center}

\tikzset{every picture/.style={line width=0.75pt}} 

\begin{tikzpicture}[x=0.75pt,y=0.75pt,yscale=-1,xscale=1]

\draw [color={rgb, 255:red, 0; green, 0; blue, 0 }  ,draw opacity=1 ]   (229.07,74.67) -- (289.23,74.67) ;
\draw [color={rgb, 255:red, 208; green, 2; blue, 27 }  ,draw opacity=1 ]   (229.07,74.67) -- (229.07,134.83) ;
\draw [color={rgb, 255:red, 208; green, 2; blue, 27 }  ,draw opacity=1 ]   (228.73,134.5) -- (288.9,134.5) ;
\draw [color={rgb, 255:red, 208; green, 2; blue, 27 }  ,draw opacity=1 ]   (288.9,74.33) -- (288.9,134.5) ;
\draw [color={rgb, 255:red, 208; green, 2; blue, 27 }  ,draw opacity=1 ]   (228.73,194.33) -- (288.9,194.33) ;
\draw [color={rgb, 255:red, 208; green, 2; blue, 27 }  ,draw opacity=1 ]   (228.73,194.33) -- (228.73,254.5) ;
\draw [color={rgb, 255:red, 0; green, 0; blue, 0 }  ,draw opacity=1 ]   (228.4,254.17) -- (288.57,254.17) ;
\draw [color={rgb, 255:red, 208; green, 2; blue, 27 }  ,draw opacity=1 ]   (288.57,194) -- (288.57,254.17) ;
\draw [color={rgb, 255:red, 0; green, 0; blue, 0 }  ,draw opacity=1 ]   (229.07,134.83) -- (229.07,195) ;
\draw [color={rgb, 255:red, 0; green, 0; blue, 0 }  ,draw opacity=1 ]   (288.9,134.5) -- (288.9,194.67) ;
\draw [color={rgb, 255:red, 0; green, 0; blue, 0 }  ,draw opacity=1 ]   (368.07,75.17) -- (428.23,75.17) ;
\draw [color={rgb, 255:red, 208; green, 2; blue, 27 }  ,draw opacity=1 ]   (368.07,75.17) -- (368.07,135.33) ;
\draw [color={rgb, 255:red, 208; green, 2; blue, 27 }  ,draw opacity=1 ]   (367.73,135) -- (427.9,135) ;
\draw [color={rgb, 255:red, 208; green, 2; blue, 27 }  ,draw opacity=1 ]   (427.9,74.83) -- (427.9,135) ;
\draw [color={rgb, 255:red, 208; green, 2; blue, 27 }  ,draw opacity=1 ]   (367.73,194.83) -- (427.9,194.83) ;
\draw [color={rgb, 255:red, 208; green, 2; blue, 27 }  ,draw opacity=1 ]   (367.73,194.83) -- (367.73,255) ;
\draw [color={rgb, 255:red, 0; green, 0; blue, 0 }  ,draw opacity=1 ]   (367.4,254.67) -- (427.57,254.67) ;
\draw [color={rgb, 255:red, 208; green, 2; blue, 27 }  ,draw opacity=1 ]   (427.57,194.5) -- (427.57,254.67) ;
\draw [color={rgb, 255:red, 0; green, 0; blue, 0 }  ,draw opacity=1 ]   (368.07,135.33) -- (368.07,195.5) ;
\draw [color={rgb, 255:red, 0; green, 0; blue, 0 }  ,draw opacity=1 ]   (427.9,135) -- (427.9,195.17) ;

\draw (252,218.43) node [anchor=north west][inner sep=0.75pt]  [font=\footnotesize]  {$\varphi _{\downarrow }$};
\draw (251.57,98.29) node [anchor=north west][inner sep=0.75pt]  [font=\footnotesize]  {$\varphi _{\uparrow }$};
\draw (251.57,157.29) node [anchor=north west][inner sep=0.75pt]  [font=\footnotesize]  {$U_{f}$};
\draw (391,218.93) node [anchor=north west][inner sep=0.75pt]  [font=\footnotesize]  {$\psi '_{\downarrow }$};
\draw (390.57,98.79) node [anchor=north west][inner sep=0.75pt]  [font=\footnotesize]  {$\psi '_{\uparrow }$};
\draw (390.57,157.79) node [anchor=north west][inner sep=0.75pt]  [font=\footnotesize]  {$U_{g}$};

\end{tikzpicture}

\end{center}
\noindent The vertical composition $\psi\boxminus\varphi$ is the square
\begin{center}

\tikzset{every picture/.style={line width=0.75pt}} 

\begin{tikzpicture}[x=0.75pt,y=0.75pt,yscale=-1,xscale=1]

\draw [color={rgb, 255:red, 0; green, 0; blue, 0 }  ,draw opacity=1 ]   (209.32,87.67) -- (269.48,87.67) ;
\draw [color={rgb, 255:red, 208; green, 2; blue, 27 }  ,draw opacity=1 ]   (209.32,87.67) -- (209.32,147.83) ;
\draw [color={rgb, 255:red, 208; green, 2; blue, 27 }  ,draw opacity=1 ]   (208.98,147.5) -- (269.15,147.5) ;
\draw [color={rgb, 255:red, 208; green, 2; blue, 27 }  ,draw opacity=1 ]   (269.15,87.33) -- (269.15,147.5) ;
\draw [color={rgb, 255:red, 208; green, 2; blue, 27 }  ,draw opacity=1 ]   (208.98,207.33) -- (269.15,207.33) ;
\draw [color={rgb, 255:red, 208; green, 2; blue, 27 }  ,draw opacity=1 ]   (208.98,207.33) -- (208.98,267.5) ;
\draw [color={rgb, 255:red, 0; green, 0; blue, 0 }  ,draw opacity=1 ]   (208.4,446.67) -- (268.57,446.67) ;
\draw [color={rgb, 255:red, 208; green, 2; blue, 27 }  ,draw opacity=1 ]   (268.82,207) -- (268.82,267.17) ;
\draw [color={rgb, 255:red, 0; green, 0; blue, 0 }  ,draw opacity=1 ]   (209.32,147.83) -- (209.32,208) ;
\draw [color={rgb, 255:red, 0; green, 0; blue, 0 }  ,draw opacity=1 ]   (269.15,147.5) -- (269.15,207.67) ;
\draw [color={rgb, 255:red, 0; green, 0; blue, 0 }  ,draw opacity=1 ]   (209.07,267.17) -- (269.23,267.17) ;
\draw [color={rgb, 255:red, 208; green, 2; blue, 27 }  ,draw opacity=1 ]   (209.07,267.17) -- (209.07,327.33) ;
\draw [color={rgb, 255:red, 208; green, 2; blue, 27 }  ,draw opacity=1 ]   (208.73,327) -- (268.9,327) ;
\draw [color={rgb, 255:red, 208; green, 2; blue, 27 }  ,draw opacity=1 ]   (268.9,266.83) -- (268.9,327) ;
\draw [color={rgb, 255:red, 208; green, 2; blue, 27 }  ,draw opacity=1 ]   (208.73,386.83) -- (268.9,386.83) ;
\draw [color={rgb, 255:red, 208; green, 2; blue, 27 }  ,draw opacity=1 ]   (208.73,386.83) -- (208.73,447) ;
\draw [color={rgb, 255:red, 208; green, 2; blue, 27 }  ,draw opacity=1 ]   (268.57,386.5) -- (268.57,446.67) ;
\draw [color={rgb, 255:red, 0; green, 0; blue, 0 }  ,draw opacity=1 ]   (209.07,327.33) -- (209.07,387.5) ;
\draw [color={rgb, 255:red, 0; green, 0; blue, 0 }  ,draw opacity=1 ]   (268.9,327) -- (268.9,387.17) ;

\draw (232.25,231.43) node [anchor=north west][inner sep=0.75pt]  [font=\footnotesize]  {$\varphi _{\downarrow }$};
\draw (231.82,111.29) node [anchor=north west][inner sep=0.75pt]  [font=\footnotesize]  {$\varphi _{\uparrow }$};
\draw (231.82,170.29) node [anchor=north west][inner sep=0.75pt]  [font=\footnotesize]  {$U_{f}$};
\draw (232,410.93) node [anchor=north west][inner sep=0.75pt]  [font=\footnotesize]  {$\psi _{\downarrow }$};
\draw (231.57,290.79) node [anchor=north west][inner sep=0.75pt]  [font=\footnotesize]  {$\psi _{\uparrow }$};
\draw (231.57,349.79) node [anchor=north west][inner sep=0.75pt]  [font=\footnotesize]  {$U_{g}$};

\end{tikzpicture}

\end{center}
\noindent The red-boundary square $\psi_\uparrow\boxminus\varphi_\downarrow$ is 'trapped' in between unit squares and the square $\varphi_\uparrow$ and $\psi_\downarrow$, and thus the above decomposition is not canonical. Under the action of a $\pi_2$-indexing, as explained in \ref{ss:canonicalform}, we could slide the composite red-boundary squre $\varphi_\downarrow\boxminus\psi_\uparrow$ down and obtain a square of the form
\begin{center}

\tikzset{every picture/.style={line width=0.75pt}} 

\begin{tikzpicture}[x=0.75pt,y=0.75pt,yscale=-1,xscale=1]

\draw [color={rgb, 255:red, 208; green, 2; blue, 27 }  ,draw opacity=1 ]   (271.07,229.67) -- (331.23,229.67) ;
\draw [color={rgb, 255:red, 208; green, 2; blue, 27 }  ,draw opacity=1 ]   (271.07,229.67) -- (271.07,169.5) ;
\draw [color={rgb, 255:red, 208; green, 2; blue, 27 }  ,draw opacity=1 ]   (270.73,169.83) -- (330.9,169.83) ;
\draw [color={rgb, 255:red, 208; green, 2; blue, 27 }  ,draw opacity=1 ]   (330.9,230) -- (330.9,169.83) ;
\draw [color={rgb, 255:red, 208; green, 2; blue, 27 }  ,draw opacity=1 ]   (270.73,110) -- (330.9,110) ;
\draw [color={rgb, 255:red, 0; green, 0; blue, 0 }  ,draw opacity=1 ]   (271.07,169.5) -- (271.07,109.33) ;
\draw [color={rgb, 255:red, 0; green, 0; blue, 0 }  ,draw opacity=1 ]   (330.9,169.83) -- (330.9,109.67) ;
\draw [color={rgb, 255:red, 208; green, 2; blue, 27 }  ,draw opacity=1 ]   (271.07,289.67) -- (271.07,229.5) ;
\draw [color={rgb, 255:red, 208; green, 2; blue, 27 }  ,draw opacity=1 ]   (330.9,290) -- (330.9,229.83) ;
\draw    (271.07,289.67) -- (330.9,290) ;
\draw [color={rgb, 255:red, 208; green, 2; blue, 27 }  ,draw opacity=1 ]   (271.07,109.31) -- (271.07,49.14) ;
\draw [color={rgb, 255:red, 208; green, 2; blue, 27 }  ,draw opacity=1 ]   (330.9,109.64) -- (330.9,49.47) ;
\draw    (271.07,49.14) -- (330.9,49.47) ;

\draw (294.4,193.77) node [anchor=north west][inner sep=0.75pt]  [font=\footnotesize]  {$\Theta $};
\draw (290.8,137.17) node [anchor=north west][inner sep=0.75pt]  [font=\footnotesize]  {$U_{gf}$};
\draw (293.67,252.86) node [anchor=north west][inner sep=0.75pt]  [font=\footnotesize]  {$\psi _{\downarrow }$};
\draw (293.67,72.5) node [anchor=north west][inner sep=0.75pt]  [font=\footnotesize]  {$\varphi _{\uparrow }$};

\end{tikzpicture}

\end{center}
\noindent which again, is canonical. The operation of sliding red-boundary squares up units, i.e. the action of a $\pi_2$-opindexing would yield a similar result. \textbf{Warning}: The ideas presented in this and the previous subsection are the intuitive foundation of our definition of $\pi_2$-indexing and $\pi_2$-opindexing appearing in Section \ref{s:mainthm}, but they do not appear in an essential way in the proof of Theorem \ref{thm:main}. In the proof of Theorem \ref{thm:main}, the operation of sliding squares up and down only makes a short appearance in proving that the double category we construct indeed provides an internalization.

%

\section{Proof of Theorem \ref{thm:main}}\label{sec:internalizationsendindex}
\noindent In this section we present the proof of Theorem \ref{thm:main}. This is done in several steps. We begin with the following observation.

\begin{obs}\label{obs:GrothendieckConstPicture}
    Let $\decB$ be a decorated 2-category. Let $\Phi \in \EndCat$. The delooping embedding $\Omega:\textbf{commMon}\to\Cat$ associating to every commutative monoid $A$ the delooping category $\Omega A$ of $A$, allows us to see $\Phi$ as a functor from $\stB\to\Cat$, where the evaluation on any object $b$ in $\stB$ is $\Omega\pi_2(B,b)$. To simplify notation, we will refer to $\Omega\pi_2(B,b)$ as $\pi_2(B,b)$. We consider the Grothendieck construction $\intGrot$ of $\Phi$. The objects of $\intGrot$ are pairs $(b, \bullet)$ where $b$ is an object in $\stB$, morphisms in $\intGrot$ are pairs of the form $(f, \varphi)$ where $f:b\to c$ is a morphism in $\stB$ and $\varphi$ is a morphism in $\pi_2(B,c)$. We will use the symbol $\boxminus$ for the composition of composable pairs $(g,\phi),(f,\varphi)$ in $\intGrot$, i.e. $(g,\phi)\boxminus(f,\varphi) = (g f, \phi\boxminus \Phi(g)(\varphi))$. We will represent every $(f,\varphi)$ in $\intGrot$ pictorially as:

\begin{center}
\tikzset{every picture/.style={line width=0.75pt}} 

\begin{tikzpicture}[x=0.75pt,y=0.75pt,yscale=-1,xscale=1]

\draw [color={rgb, 255:red, 208; green, 2; blue, 27 } ] (100,101.6) -- (150,101.6);
\draw [color={rgb, 255:red, 208; green, 2; blue, 27 } ]  (150,151.6) -- (100,151.6);

\draw (100,101.6) --(100,151.6);
\draw (150,101.6) -- (150,151.6);

\draw (83.36,120.04) node [anchor=north west][inner sep=0.75pt]  [font=\scriptsize]  {$f$};
\draw (155.36,120.04) node [anchor=north west][inner sep=0.75pt]  [font=\scriptsize]  {$f$};
\draw (119.36,123.04) node [anchor=north west][inner sep=0.75pt]  [font=\scriptsize]  {$\phy $};
\end{tikzpicture}     
\end{center}

\noindent Composition in $\intGrot$ is thus represented as:

\begin{center}
\tikzset{every picture/.style={line width=0.75pt}} 

\begin{tikzpicture}[x=0.75pt,y=0.75pt,yscale=-1,xscale=1]

\draw [color={rgb, 255:red, 208; green, 2; blue, 27 }  ,draw opacity=1 ][fill={rgb, 255:red, 208; green, 2; blue, 27 }  ,fill opacity=1 ]   (331,268) -- (399,268.5) ;
\draw [color={rgb, 255:red, 0; green, 0; blue, 0 }  ,draw opacity=1 ]   (331,268) -- (330,337) ;
\draw [color={rgb, 255:red, 208; green, 2; blue, 27 }  ,draw opacity=1 ]   (330,337) -- (398,337.5) ;
\draw [color={rgb, 255:red, 0; green, 0; blue, 0 }  ,draw opacity=1 ]   (399,268.5) -- (398,337.5) ;
\draw [color={rgb, 255:red, 0; green, 0; blue, 0 }  ,draw opacity=1 ]   (330,337) -- (329,406) ;
\draw [color={rgb, 255:red, 208; green, 2; blue, 27 }  ,draw opacity=1 ][fill={rgb, 255:red, 0; green, 0; blue, 0 }  ,fill opacity=1 ]   (329,406) -- (397,406.5) ;
\draw [color={rgb, 255:red, 0; green, 0; blue, 0 }  ,draw opacity=1 ]   (398,337.5) -- (397,406.5) ;
\draw [color={rgb, 255:red, 208; green, 2; blue, 27 }  ,draw opacity=1 ][fill={rgb, 255:red, 208; green, 2; blue, 27 }  ,fill opacity=1 ]   (502,308) -- (611,308.5) ;
\draw [color={rgb, 255:red, 0; green, 0; blue, 0 }  ,draw opacity=1 ]   (502,308) -- (501,377) ;
\draw [color={rgb, 255:red, 208; green, 2; blue, 27 }  ,draw opacity=1 ]   (501,377) -- (610,377.5) ;
\draw [color={rgb, 255:red, 0; green, 0; blue, 0 }  ,draw opacity=1 ]   (611,308.5) -- (610,377.5) ;

\draw (313,292.4) node [anchor=north west][inner sep=0.75pt]  [color={rgb, 255:red, 0; green, 0; blue, 0 }  ,opacity=1 ]  {$f$};
\draw (405,292.4) node [anchor=north west][inner sep=0.75pt]  [color={rgb, 255:red, 0; green, 0; blue, 0 }  ,opacity=1 ]  {$f$};
\draw (356,295.4) node [anchor=north west][inner sep=0.75pt]  [color={rgb, 255:red, 0; green, 0; blue, 0 }  ,opacity=1 ]  {$\varphi $};
\draw (356,366) node [anchor=north west][inner sep=0.75pt]    {$\phi $};
\draw (311,366) node [anchor=north west][inner sep=0.75pt]  [color={rgb, 255:red, 0; green, 0; blue, 0 }  ,opacity=1 ]  {$g$};
\draw (403,366) node [anchor=north west][inner sep=0.75pt]  [color={rgb, 255:red, 0; green, 0; blue, 0 }  ,opacity=1 ]  {$g$};
\draw (426,335.4) node [anchor=north west][inner sep=0.75pt]    {$=$};
\draw (475,333) node [anchor=north west][inner sep=0.75pt]  [color={rgb, 255:red, 0; green, 0; blue, 0 }  ,opacity=1 ]  {$gf$};
\draw (617,333) node [anchor=north west][inner sep=0.75pt]  [color={rgb, 255:red, 0; green, 0; blue, 0 }  ,opacity=1 ]  {$gf$};
\draw (517.3,333.4) node [anchor=north west][inner sep=0.75pt]    {$\phi \boxminus \Phi ( g)( \varphi )$};
\end{tikzpicture}   
\end{center}

\end{obs}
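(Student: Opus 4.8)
The plan is to verify that $\intGrot$, as described, is nothing other than the ordinary Grothendieck construction of the composite strict functor $\Omega\circ\Phi\cn\stB\to\Cat$, and then to read off the asserted descriptions of objects, morphisms, identities, and composition by specializing the standard formulas to the fact that each fibre is a one-object category.

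First I would recall the delooping assignment $\Omega\cn\commMon\to\Cat$: it sends a commutative monoid $A$ to the one-object category $\Omega A$ whose unique hom-monoid is $A$, with composition the monoid operation and unit the monoid unit, and sends a monoid homomorphism to the evident identity-on-objects functor. Since a functor between one-object categories is precisely a monoid homomorphism of hom-monoids, and commutativity is a property rather than added structure, the map $\commMon(A,B)\to\Cat(\Omega A,\Omega B)$ is a bijection; together with injectivity on objects this shows $\Omega$ is an embedding. Post-composing the $\pi_2$-indexing $\Phi\cn\stB\to\commMon$ with $\Omega$ therefore produces an honest strict functor $\stB\to\Cat$ whose value at an object $b$ is the one-object category $\Omega\pi_2(B,b)$; we retain the name $\Phi$ for this functor and abbreviate $\Omega\pi_2(B,b)$ to $\pi_2(B,b)$, exactly as in the statement.

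Next I would invoke the standard description of the Grothendieck construction $\int_{\mathcal J}F$ of a functor $F\cn\mathcal J\to\Cat$: its objects are pairs $(j,x)$ with $j$ an object of $\mathcal J$ and $x$ an object of $F(j)$; a morphism $(j,x)\to(j',x')$ is a pair $(u,\al)$ with $u\cn j\to j'$ in $\mathcal J$ and $\al\cn F(u)(x)\to x'$ in $F(j')$; the identity of $(j,x)$ is $(\id_j,\id_x)$; and composition is $(v,\be)\boxminus(u,\al)=(vu,\ \be\circ F(v)(\al))$. This is well typed because strict functoriality gives $F(vu)=F(v)F(u)$, so $F(v)(\al)\cn F(vu)(x)\to F(v)(x')$ is composable with $\be\cn F(v)(x')\to x''$, and associativity together with the unit laws then hold for the same formal reason they hold for any $\Cat$-valued functor. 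Because $\Phi$ is an honest functor, not a pseudofunctor, no coherence bookkeeping is needed at this step.

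Finally I would specialize to $F=\Omega\Phi$. Since each $F(b)=\Omega\pi_2(B,b)$ has a single object $\bullet$, the objects of $\intGrot$ are exactly the pairs $(b,\bullet)$, effectively the objects of $\stB$; and since $F(f)$ is forced to send $\bullet$ to $\bullet$, a morphism $(b,\bullet)\to(c,\bullet)$ is a pair $(f,\varphi)$ with $f\cn b\to c$ in $\stB$ and $\varphi$ a morphism of $\Omega\pi_2(B,c)$, i.e. an element of $\pi_2(B,c)$. Unwinding the composition formula, $F(g)$ acts on the morphisms of $\Omega\pi_2(B,c)$ as the monoid homomorphism $\Phi(g)\cn\pi_2(B,c)\to\pi_2(B,d)$, and the symbol $\circ$ inside $\Omega\pi_2(B,d)$ is the monoid operation, written $\boxminus$; hence $(g,\phi)\boxminus(f,\varphi)=(gf,\ \phi\boxminus\Phi(g)(\varphi))$, with identities $(\id_b,e)$ for $e$ the unit of $\pi_2(B,b)$, precisely as stated. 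The two displayed diagrams are then merely notational conventions: a morphism $(f,\varphi)$ is drawn as a hollow square whose two (red) vertical frames are both labelled $f$ and whose interior carries $\varphi$, and composition is drawn by vertical stacking, which visibly reproduces the formula with outer frame $gf$. I do not expect any genuine obstacle; the only points needing a little care are that it is the upper datum $\varphi$, not $\phi$, that is transported by $\Phi(g)$ in the composite, and that it is precisely the single-object-ness of each fibre $\Omega\pi_2(B,b)$ that collapses the general pair $(u,\al)$ to the simpler pair $(f,\varphi)$.
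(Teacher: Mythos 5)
Your verification is correct and matches the paper's intent exactly: the Observation is simply the standard (covariant, strict) Grothendieck construction applied to $\Omega\circ\Phi$, and your specialization to one-object fibres recovers precisely the stated objects, morphisms, and composition formula $(g,\phi)\boxminus(f,\varphi)=(gf,\ \phi\boxminus\Phi(g)(\varphi))$, including the correct placement of $\varphi$ in the codomain fibre and the transport of $\varphi$ (not $\phi$) by $\Phi(g)$. The paper offers no further argument beyond the statement itself, so there is nothing missing.
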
\label{obs:Grothendieckconstruction}

\begin{obs}
    Let $\Phi \in \EndCat$: 
    \begin{enumerate}
        \item The projection functor $P:\intGrot \to \stB$ associating $b$ to every object $(b,\bullet)$ in $\intGrot$ and $f$ to every morphism $(f,\varphi)$, is a fiber functor. The functor $U:\stB \to \intGrot$ associating to every object $b$ in $\stB$, $(b,\bullet)$, to every morphism $f$ in $\stB$, $(f, id_c)$ is faithful, and satisfies $PU=id_{\stB}$.
        \item For every object $b$ in $\stB$ there is a faithful functor $\pi_2(B,b)\to \intGrot$ that sends the single object to $(b,\bullet)$ and sends every $\varphi\in \pi_2(B,b)$ to $(id_b,\varphi)$. These functors extend into a functor $\bigsqcup \pi_2(B,b)\to \intGrot$. 
    \end{enumerate}
    
\end{obs}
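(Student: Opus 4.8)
\noindent The plan is to prove both items by unwinding the description of $\intGrot$ recorded in Observation \ref{obs:GrothendieckConstPicture}, the only structural inputs being that $\Phi$ is a functor (so $\Phi(\id_b)$ is the identity of the monoid $\pi_2(B,b)$ for every object $b$) and that each morphism $\Phi(f)$ is a homomorphism of commutative monoids (so $\Phi(f)$ carries the unit element of $\pi_2(B,b)$ to the unit element of $\pi_2(B,c)$). Throughout, $\id_c$ denotes the identity morphism of the one-object category $\pi_2(B,c)$, i.e.\ the unit of that monoid.

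For item~1, I would first verify that $P$ is a strict functor: it sends $\id_{(b,\bullet)}=(\id_b,\id_b)$ to $\id_b$, and it sends $(g,\phi)\boxminus(f,\varphi)=(gf,\,\phi\boxminus\Phi(g)(\varphi))$ to $gf=P(g,\phi)\,P(f,\varphi)$. To see that $P$ is a fiber functor I would exhibit, for each $f\colon b\to c$ in $\stB$, the lift $(f,\id_c)\colon(b,\bullet)\to(c,\bullet)$ and check its universal property: for any composable $(g,\psi)$ one has $(g,\psi)\boxminus(f,\id_c)=(gf,\,\psi\boxminus\Phi(g)(\id_c))=(gf,\psi)$ since $\Phi(g)$ preserves units, which yields both the existence and the uniqueness of the required factorizations. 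For $U$, the only nonformal point is compatibility with composition, $U(g)\boxminus U(f)=(g,\id_d)\boxminus(f,\id_c)=(gf,\,\id_d\boxminus\Phi(g)(\id_c))=(gf,\id_d)=U(gf)$, again because $\Phi(g)$ preserves units; $U(\id_b)=(\id_b,\id_b)=\id_{(b,\bullet)}$ and $PU=\id_{\stB}$ are read off immediately, and $U$ is faithful because $f\mapsto(f,\id_c)$ is injective on morphisms.

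For item~2, I would check that the assignment sending the single object of $\pi_2(B,b)$ to $(b,\bullet)$ and each $\varphi$ to $(\id_b,\varphi)$ is a functor: it preserves the identity, $\id_\bullet\mapsto(\id_b,\id_b)=\id_{(b,\bullet)}$, and it preserves composition, $(\id_b,\phi)\boxminus(\id_b,\varphi)=(\id_b,\,\phi\boxminus\Phi(\id_b)(\varphi))=(\id_b,\phi\boxminus\varphi)$, where this time one uses that $\Phi(\id_b)$ is the identity; it is faithful because $\varphi\mapsto(\id_b,\varphi)$ is injective. Finally, these functors, one for each object $b$ of $\stB$, assemble into a single functor out of $\bigsqcup_b\pi_2(B,b)$ by the universal property of the coproduct in $\Cat$; I would also remark that the assembled functor is itself faithful, since morphisms originating from distinct summands have distinct source and target objects in $\intGrot$, so the relevant hom-sets of the coproduct still inject.

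The honest answer to which step is the main obstacle is that there is none: the statement is a routine consequence of the definitions. The two points that require a little care are to fix the precise meaning of \emph{fiber functor} used in the paper and confirm that the lift $(f,\id_c)$ realizes it, and to keep track, at each verification, of which property of $\Phi$ is in play---functoriality of $\Phi$ for the functors of item~2, and unit-preservation of the individual $\Phi(f)$ for the fiber-functor property of $P$ and for functoriality of $U$ in item~1.
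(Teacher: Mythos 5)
The paper states this as an Observation with no accompanying proof, so there is no paper argument to compare against; your verification supplies exactly the routine unwinding of definitions that the Observation implicitly relies on, and it is correct. You correctly isolate the two properties of $\Phi$ that do the work — that each $\Phi(f)$ preserves monoid units (used for the opcartesian lifts of $P$ and for functoriality of $U$) and that $\Phi(\id_b)$ is the identity (used for functoriality of the inclusions $\pi_2(B,b)\to\intGrot$) — and you are right to flag the notational overloading in the paper's ``$\id_c$,'' which must be read as the unit of the monoid $\pi_2(B,c)$ viewed as a one-object category rather than as a morphism of $\stB$. One small terminological caveat: the paper never defines ``fiber functor,'' and what your computation actually establishes is that the lifts $(f,\id_c)$ are \emph{op}cartesian, i.e.\ that $P$ is the opfibration arising from the covariant Grothendieck construction; that is surely the intended reading, but it is worth saying explicitly since the word ``lift'' alone does not distinguish the fibration from the opfibration convention.
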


\begin{definition}\label{def:PhiIntUnionB}
    Let $\decB\in \twocatast$, $\Phi \in \EndCat$. Let $B_1$ denote the category whose objects and morphisms are 1- and 2-cells of $B$ respectively, under vertical composition. The coproduct $\bigsqcup \pi_2(B,b)$ over all objects $b$ in $\stB_0$ is the full subcategory of $B_1$ generated by horizontal identities of objects in $\stB$. We denote by $\GcoppiB$ the pushout of categories:

\begin{center}
\begin{tikzpicture}
\matrix(m)[matrix of math nodes, row sep=4em, column sep=4em,text height=1.5ex, text depth=0.25ex]
{\bigsqcup \pi_2(b,B) & B_1\\
\intGrot & \GcoppiB\\};
\path[->,font=\scriptsize,>=angle 90]
(m-1-1) edge node []{} (m-1-2)
        edge node []{} (m-2-1)
(m-1-2) edge node []{}(m-2-2)
(m-2-1) edge node []{}(m-2-2)
;
\end{tikzpicture}
\end{center}
    
    \noindent The objects of $\GcoppiB$ are the objects of $B_1$ and the objects of $\intGrot$ identifying $(b,\bullet)$ with the horizontal identity of $b$ for all $b\in \stB_0$. Morphisms in $\GcoppiB$ are composites of the morphisms of $B_1$ and the morphisms of $\intGrot$, identifying $\varphi$ with $(id_b, \varphi)$ for all $b\in \stB_0$ and all $\varphi \in \pi_2(B,b)$. Pictorially, morphisms in $\GcoppiB$ are thus composites of squares of either of the following two forms:

\begin{center}

\tikzset{every picture/.style={line width=0.75pt}} 

\begin{tikzpicture}[x=0.75pt,y=0.75pt,yscale=-1,xscale=1]

\draw [color={rgb, 255:red, 208; green, 2; blue, 27 }  ,draw opacity=1 ]   (120,121.6) -- (170,121.6) ;
\draw [color={rgb, 255:red, 208; green, 2; blue, 27 }  ,draw opacity=1 ]   (120,171.6) -- (170,171.6) ;
\draw    (221,120.6) -- (271,120.6) ;
\draw    (221,170.6) -- (271,170.6) ;
\draw    (120,121.6) -- (120,171.6) ;
\draw    (170,121.6) -- (170,171.6) ;
\draw [color={rgb, 255:red, 208; green, 2; blue, 27 }  ,draw opacity=1 ]   (221,120.6) -- (221,170.6) ;
\draw [color={rgb, 255:red, 208; green, 2; blue, 27 }  ,draw opacity=1 ]   (271,120.6) -- (271,170.6) ;

\draw (172.36,140.04) node [anchor=north west][inner sep=0.75pt]  [font=\scriptsize]  {$f$};
\draw (105,140.04) node [anchor=north west][inner sep=0.75pt]  [font=\scriptsize]  {$f$};

\end{tikzpicture}
\end{center}
\noindent where squares on the left are as in Observation \ref{obs:GrothendieckConstPicture}, squares on the right represent 2-cells in $B$, and where the two possible interpretations as above of a square 
\begin{center}

\tikzset{every picture/.style={line width=0.75pt}} 

\begin{tikzpicture}[x=0.75pt,y=0.75pt,yscale=-1,xscale=1]

\draw [color={rgb, 255:red, 208; green, 2; blue, 27 }  ,draw opacity=1 ]   (241,140.6) -- (291,140.6) ;
\draw [color={rgb, 255:red, 208; green, 2; blue, 27 }  ,draw opacity=1 ]   (241,190.6) -- (291,190.6) ;
\draw [color={rgb, 255:red, 208; green, 2; blue, 27 }  ,draw opacity=1 ]   (241,140.6) -- (241,190.6) ;
\draw [color={rgb, 255:red, 208; green, 2; blue, 27 }  ,draw opacity=1 ]   (291,140.6) -- (291,190.6) ;

\end{tikzpicture}

\end{center}
\noindent are identified. Observe that the composition, in $\GcoppiB$, of 2-cells in $B$ that are not elements of $\pi_2(B,a)$ for some object $a$, and morphisms in $\intGrot$, is a formal concatenation.

\end{definition}
\begin{obs}\label{obs:slidingdowntheorem}
    Let $\decB\in\twocatast$. Let $\Phi\in\EndCat$. Consider two composable morphisms of the form $(id,\varphi)$ and $(f,id)$ in $\intGrot$. We have the identity  $$(f,id)\boxminus(id,\varphi) = (f, \Phi(f)(\varphi))$$\noindent which diagramatically looks like:
\begin{center}

\tikzset{every picture/.style={line width=0.75pt}} 

\begin{tikzpicture}[x=0.75pt,y=0.75pt,yscale=-1,xscale=1]

\draw [color={rgb, 255:red, 208; green, 2; blue, 27 }  ,draw opacity=1 ][fill={rgb, 255:red, 208; green, 2; blue, 27 }  ,fill opacity=1 ]   (349,1501) -- (417,1501) ;
\draw [color={rgb, 255:red, 208; green, 2; blue, 27 }  ,draw opacity=1 ]   (417,1501) -- (417,1569) ;
\draw [color={rgb, 255:red, 208; green, 2; blue, 27 }  ,draw opacity=1 ]   (349,1501) -- (349,1569) ;
\draw [color={rgb, 255:red, 208; green, 2; blue, 27 }  ,draw opacity=1 ][fill={rgb, 255:red, 208; green, 2; blue, 27 }  ,fill opacity=1 ]   (349,1569) -- (417,1569) ;
\draw [color={rgb, 255:red, 0; green, 0; blue, 0 }  ,draw opacity=1 ]   (349,1569) -- (348.59,1596.97) -- (349,1637) ;
\draw [color={rgb, 255:red, 208; green, 2; blue, 27 }  ,draw opacity=1 ]   (349,1637) -- (390,1637) -- (417,1637) ;
\draw [color={rgb, 255:red, 0; green, 0; blue, 0 }  ,draw opacity=1 ]   (417,1569) -- (417,1637) ;
\draw [color={rgb, 255:red, 208; green, 2; blue, 27 }  ,draw opacity=1 ][fill={rgb, 255:red, 208; green, 2; blue, 27 }  ,fill opacity=1 ]   (513,1536) -- (581,1536) ;
\draw [color={rgb, 255:red, 0; green, 0; blue, 0 }  ,draw opacity=1 ]   (581,1536) -- (581,1604) ;
\draw [color={rgb, 255:red, 208; green, 2; blue, 27 }  ,draw opacity=1 ]   (513,1604) -- (581,1604) ;
\draw [color={rgb, 255:red, 0; green, 0; blue, 0 }  ,draw opacity=1 ]   (513,1536) -- (513,1604) ;

\draw (329,1592.4) node [anchor=north west][inner sep=0.75pt]  [color={rgb, 255:red, 0; green, 0; blue, 0 }  ,opacity=1 ]  {$f$};
\draw (428,1592.4) node [anchor=north west][inner sep=0.75pt]  [color={rgb, 255:red, 0; green, 0; blue, 0 }  ,opacity=1 ]  {$f$};
\draw (376,1595.4) node [anchor=north west][inner sep=0.75pt]  [color={rgb, 255:red, 0; green, 0; blue, 0 }  ,opacity=1 ]  {$id$};
\draw (376,1526.4) node [anchor=north west][inner sep=0.75pt]    {$\varphi $};
\draw (454,1560.4) node [anchor=north west][inner sep=0.75pt]    {$=$};
\draw (491,1560.4) node [anchor=north west][inner sep=0.75pt]  [color={rgb, 255:red, 0; green, 0; blue, 0 }  ,opacity=1 ]  {$f$};
\draw (589,1560.4) node [anchor=north west][inner sep=0.75pt]  [color={rgb, 255:red, 0; green, 0; blue, 0 }  ,opacity=1 ]  {$f$};
\draw (519,1561.4) node [anchor=north west][inner sep=0.75pt]  [color={rgb, 255:red, 0; green, 0; blue, 0 }  ,opacity=1 ]  {$\Phi ( f)( \varphi )$};

\end{tikzpicture}
\end{center}
    \noindent Now, observe that for every $(f,\phi)$ in $\intGrot$ we have 
$$(f,\phi)=(f,\phi\boxminus id)=(id\circ f, \phi\boxminus\Phi (id)(id))=(id,\phi)\boxminus(f,id).$$ Diagramatically, the above equation is the composition:

\begin{center}

\tikzset{every picture/.style={line width=0.75pt}} 

\begin{tikzpicture}[x=0.75pt,y=0.75pt,yscale=-1,xscale=1]

\draw    (120,100.5) -- (120,170.6) ;
\draw    (190,100.5) -- (190,170.6) ;
\draw [color={rgb, 255:red, 208; green, 2; blue, 27 }  ,draw opacity=1 ]   (120,100.5) -- (190,100.5) ;
\draw [color={rgb, 255:red, 208; green, 2; blue, 27 }  ,draw opacity=1 ]   (120,170.6) -- (190,170.6) ;
\draw    (280.71,64.9) -- (280.71,135) ;
\draw    (350.71,64.9) -- (350.71,135) ;
\draw [color={rgb, 255:red, 208; green, 2; blue, 27 }  ,draw opacity=1 ]   (280.71,64.9) -- (350.71,64.9) ;
\draw [color={rgb, 255:red, 208; green, 2; blue, 27 }  ,draw opacity=1 ]   (280.71,135) -- (350.71,135) ;
\draw [color={rgb, 255:red, 208; green, 2; blue, 27 }  ,draw opacity=1 ]   (280.71,135.07) -- (280.71,205.17) ;
\draw [color={rgb, 255:red, 208; green, 2; blue, 27 }  ,draw opacity=1 ]   (350.71,135.07) -- (350.71,205.17) ;
\draw [color={rgb, 255:red, 208; green, 2; blue, 27 }  ,draw opacity=1 ]   (280.71,205.17) -- (350.71,205.17) ;

\draw (99,125.26) node [anchor=north west][inner sep=0.75pt]    {$f$};
\draw (199,124.11) node [anchor=north west][inner sep=0.75pt]    {$f$};
\draw (147.86,124.69) node [anchor=north west][inner sep=0.75pt]    {$\phi $};
\draw (308.57,89.09) node [anchor=north west][inner sep=0.75pt]    {$id$};
\draw (308.57,159.26) node [anchor=north west][inner sep=0.75pt]    {$\phi $};
\draw (260.14,92.4) node [anchor=north west][inner sep=0.75pt]    {$f$};
\draw (359.29,92.4) node [anchor=north west][inner sep=0.75pt]    {$f$};
\draw (228.43,125.26) node [anchor=north west][inner sep=0.75pt]    {$=$};

\end{tikzpicture}

\end{center}

\noindent Combining the last two diagrammatic equations, we have the relation $(f,id)\boxminus (id,\varphi) = (id,\Phi(f)(\varphi))\boxminus (f,id)$. Diagramatically:
\begin{center}

\tikzset{every picture/.style={line width=0.75pt}} 

\begin{tikzpicture}[x=0.75pt,y=0.75pt,yscale=-1,xscale=1]

\draw [color={rgb, 255:red, 208; green, 2; blue, 27 }  ,draw opacity=1 ][fill={rgb, 255:red, 208; green, 2; blue, 27 }  ,fill opacity=1 ]   (52,1700) -- (120,1700) ;
\draw [color={rgb, 255:red, 208; green, 2; blue, 27 }  ,draw opacity=1 ]   (120,1700) -- (120,1768) ;
\draw [color={rgb, 255:red, 208; green, 2; blue, 27 }  ,draw opacity=1 ]   (52,1700) -- (52,1768) ;
\draw [color={rgb, 255:red, 208; green, 2; blue, 27 }  ,draw opacity=1 ][fill={rgb, 255:red, 208; green, 2; blue, 27 }  ,fill opacity=1 ]   (52,1768) -- (120,1768) ;
\draw [color={rgb, 255:red, 0; green, 0; blue, 0 }  ,draw opacity=1 ]   (52,1768) -- (51.59,1795.97) -- (52,1836) ;
\draw [color={rgb, 255:red, 208; green, 2; blue, 27 }  ,draw opacity=1 ]   (52,1836) -- (93,1836) -- (120,1836) ;
\draw [color={rgb, 255:red, 0; green, 0; blue, 0 }  ,draw opacity=1 ]   (120,1768) -- (120,1836) ;
\draw [color={rgb, 255:red, 208; green, 2; blue, 27 }  ,draw opacity=1 ][fill={rgb, 255:red, 208; green, 2; blue, 27 }  ,fill opacity=1 ]   (209,1702) -- (277,1702) ;
\draw [color={rgb, 255:red, 0; green, 0; blue, 0 }  ,draw opacity=1 ]   (277,1702) -- (277,1770) ;
\draw [color={rgb, 255:red, 0; green, 0; blue, 0 }  ,draw opacity=1 ]   (209,1702) -- (209,1770) ;
\draw [color={rgb, 255:red, 208; green, 2; blue, 27 }  ,draw opacity=1 ]   (209,1770) -- (208.59,1797.97) -- (209,1838) ;
\draw [color={rgb, 255:red, 208; green, 2; blue, 27 }  ,draw opacity=1 ]   (209,1838) -- (277,1838) ;
\draw [color={rgb, 255:red, 208; green, 2; blue, 27 }  ,draw opacity=1 ]   (277,1770) -- (277,1838) ;
\draw [color={rgb, 255:red, 208; green, 2; blue, 27 }  ,draw opacity=1 ][fill={rgb, 255:red, 208; green, 2; blue, 27 }  ,fill opacity=1 ]   (209,1770) -- (277,1770) ;

\draw (32,1791.4) node [anchor=north west][inner sep=0.75pt]  [color={rgb, 255:red, 0; green, 0; blue, 0 }  ,opacity=1 ]  {$f$};
\draw (131,1791.4) node [anchor=north west][inner sep=0.75pt]  [color={rgb, 255:red, 0; green, 0; blue, 0 }  ,opacity=1 ]  {$f$};
\draw (79,1794.4) node [anchor=north west][inner sep=0.75pt]  [color={rgb, 255:red, 0; green, 0; blue, 0 }  ,opacity=1 ]  {$id$};
\draw (79,1725.4) node [anchor=north west][inner sep=0.75pt]    {$\varphi $};
\draw (157,1759.4) node [anchor=north west][inner sep=0.75pt]    {$=$};
\draw (190,1724.4) node [anchor=north west][inner sep=0.75pt]  [color={rgb, 255:red, 0; green, 0; blue, 0 }  ,opacity=1 ]  {$f$};
\draw (282,1724.4) node [anchor=north west][inner sep=0.75pt]  [color={rgb, 255:red, 0; green, 0; blue, 0 }  ,opacity=1 ]  {$f$};
\draw (234,1727.4) node [anchor=north west][inner sep=0.75pt]    {$id$};
\draw (215,1795.4) node [anchor=north west][inner sep=0.75pt]  [color={rgb, 255:red, 0; green, 0; blue, 0 }  ,opacity=1 ]  {$\Phi ( f)( \varphi )$};

\end{tikzpicture}
\end{center}
\noindent The above equation says that the $\pi_2$-indexing $\Phi$ gives us a way to slide squares down along squares parametrized by an $f$. See the comments in Subsection \ref{ss:canonicaldecompositions}. 
\end{obs}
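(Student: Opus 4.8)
This observation is purely a computation inside the Grothendieck category $\intGrot$, obtained by unwinding the composition formula $(g,\phi)\boxminus(f,\varphi)=(gf,\,\phi\boxminus\Phi(g)(\varphi))$ recorded above, together with two standing facts about $\Phi$: that it is a functor, so $\Phi(\id)=\id$, and that each value $\Phi(c)=\pi_2(B,c)$ is a commutative monoid with a unit element. The plan is to verify the three displayed equations in turn and then read off the final relation from the first and the third.

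First I would fix a morphism $f\cn b\to c$ in $\stB$ and an element $\varphi\in\pi_2(B,b)$, and note that $\Phi(f)$ is a monoid homomorphism $\pi_2(B,b)\to\pi_2(B,c)$, so $\Phi(f)(\varphi)\in\pi_2(B,c)$ and every composite written below is well typed. For the first equation, the composition formula gives $(f,\id)\boxminus(\id,\varphi)=(f\circ\id_b,\ \id_c\boxminus\Phi(f)(\varphi))$, which collapses to $(f,\Phi(f)(\varphi))$ because $\id_c$ is the unit of $\pi_2(B,c)$. For the second equation, applied to an arbitrary $(f,\phi)$, the same formula gives $(\id,\phi)\boxminus(f,\id)=(\id_c\circ f,\ \phi\boxminus\Phi(\id_c)(\id_c))$; functoriality forces $\Phi(\id_c)=\id_{\pi_2(B,c)}$, hence $\Phi(\id_c)(\id_c)=\id_c$, and unitality of $\pi_2(B,c)$ then leaves $(f,\phi)$.

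The final relation follows by combining these: specializing the second equation to $\phi=\Phi(f)(\varphi)$ gives $(\id,\Phi(f)(\varphi))\boxminus(f,\id)=(f,\Phi(f)(\varphi))$, and by the first equation this equals $(f,\id)\boxminus(\id,\varphi)$; hence $(f,\id)\boxminus(\id,\varphi)=(\id,\Phi(f)(\varphi))\boxminus(f,\id)$. The only thing that requires attention is bookkeeping of variances — keeping track of which fiber $\pi_2(B,-)$ each $2$-cell sits in and on which side $\Phi$ acts — but there is no genuine obstacle: once the composition formula, functoriality, and unitality are in hand the identity is forced. The three diagrams in the statement are simply these equations drawn as stacks of red-boundary squares, and the last of them records the move of sliding a $\pi_2(B,-)$-square down past an $f$-parametrized square that is used in the internalization argument of Section~\ref{sec:internalizationsendindex}.
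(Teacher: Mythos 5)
Your computation is correct and is exactly the paper's own argument: both simply unwind the composition rule $(g,\phi)\boxminus(f,\varphi)=(gf,\phi\boxminus\Phi(g)(\varphi))$ of Observation \ref{obs:GrothendieckConstPicture}, using functoriality ($\Phi(\id)=\id$) and unitality of the monoids $\pi_2(B,-)$, and then obtain the sliding relation by chaining the first two identities. The typing bookkeeping you flag ($\varphi\in\pi_2(B,b)$, $\Phi(f)(\varphi)\in\pi_2(B,c)$) is also consistent with the paper's convention that the $\pi_2$-component of $(f,\varphi)$ lives in the codomain fiber.
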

\noindent In the following observation we analyze the composition of morphisms in $\intGrot\sqcup_\pi B_1$.
\begin{obs}\label{obs:compositionpi}
     In order to spell out the composition operation in $\intGrot\sqcup_\pi B_1$ we consider several cases. \textbf{Case 1}: The two morphisms are in $B_1$. In that case the composition 
    is the composition in $B_1$. \textbf{Case 2}: The two morphisms are in $\int_{\stB}\Phi$. In that case the composition is the composition in Observation \ref{obs:GrothendieckConstPicture}. \textbf{Case 3}: The first morphism is a morphism $\varphi$ in $B_1$ but not in $\int_{\stB}\Phi$ and the second morphism is a morphism $(f,\psi)$ in $\int_{\stB}\Phi$ but not in $B_1$. Then the composition $(f,\psi)\boxminus \varphi$ is the formal concatenation of $(f,\psi)$ and $\varphi$. \textbf{Case 4}: The first morphism is a morphism $(f,\varphi)$ in $\int_{\stB}\Phi$ but not in $B_1$ and the second morphism is a morphism $\psi$ in $B_1$ but not in $\int_{\stB}\Phi$. Then, we have the next equation for the composition of $\psi$ with $(f,\varphi)$: $$\psi \boxminus (f,\varphi) = \psi\boxminus [(id, \varphi)\boxminus(f,id)]=[\psi\boxminus\varphi]\boxminus (f,id)$$
Thus, geometrically, the composition of $\psi$ and $(f,\varphi)$ is the concatenation:

\begin{center}

\tikzset{every picture/.style={line width=0.75pt}} 

\begin{tikzpicture}[x=0.75pt,y=0.75pt,yscale=-1,xscale=1]

\draw    (300.71,84.9) -- (300.71,155) ;
\draw    (370.71,84.9) -- (370.71,155) ;
\draw [color={rgb, 255:red, 208; green, 2; blue, 27 }  ,draw opacity=1 ]   (300.71,84.9) -- (370.71,84.9) ;
\draw [color={rgb, 255:red, 208; green, 2; blue, 27 }  ,draw opacity=1 ]   (300.71,155) -- (370.71,155) ;
\draw [color={rgb, 255:red, 208; green, 2; blue, 27 }  ,draw opacity=1 ]   (300.71,155.07) -- (300.71,225.17) ;
\draw [color={rgb, 255:red, 208; green, 2; blue, 27 }  ,draw opacity=1 ]   (370.71,155.07) -- (370.71,225.17) ;
\draw [color={rgb, 255:red, 0; green, 0; blue, 0 }  ,draw opacity=1 ]   (300.71,225.17) -- (370.71,225.17) ;
\draw    (149.91,84.73) -- (149.91,154.83) ;
\draw    (219.91,84.73) -- (219.91,154.83) ;
\draw [color={rgb, 255:red, 208; green, 2; blue, 27 }  ,draw opacity=1 ]   (149.91,84.73) -- (219.91,84.73) ;
\draw [color={rgb, 255:red, 208; green, 2; blue, 27 }  ,draw opacity=1 ]   (149.91,154.83) -- (219.91,154.83) ;
\draw [color={rgb, 255:red, 208; green, 2; blue, 27 }  ,draw opacity=1 ]   (149.91,154.9) -- (149.91,225) ;
\draw [color={rgb, 255:red, 208; green, 2; blue, 27 }  ,draw opacity=1 ]   (219.91,154.9) -- (219.91,225) ;
\draw [color={rgb, 255:red, 0; green, 0; blue, 0 }  ,draw opacity=1 ]   (149.91,225) -- (219.91,225) ;

\draw (328.57,109.09) node [anchor=north west][inner sep=0.75pt]    {$id$};
\draw (312.57,179.26) node [anchor=north west][inner sep=0.75pt]    {$\varphi \boxminus \psi $};
\draw (280.14,112.4) node [anchor=north west][inner sep=0.75pt]    {$f$};
\draw (379.29,112.4) node [anchor=north west][inner sep=0.75pt]    {$f$};
\draw (248.43,145.26) node [anchor=north west][inner sep=0.75pt]    {$=$};
\draw (177.77,108.91) node [anchor=north west][inner sep=0.75pt]    {$\varphi $};
\draw (177.77,179.09) node [anchor=north west][inner sep=0.75pt]    {$\psi $};
\draw (129.34,112.23) node [anchor=north west][inner sep=0.75pt]    {$f$};
\draw (228.49,112.23) node [anchor=north west][inner sep=0.75pt]    {$f$};

\end{tikzpicture}

\end{center}
\end{obs}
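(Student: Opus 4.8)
The plan is to read off the composition in $\GcoppiB$ from the explicit description of its morphisms given in Definition~\ref{def:PhiIntUnionB}. Because $\GcoppiB$ is the pushout of categories of $B_1$ and $\intGrot$ along the faithful inclusions of $\bigsqcup\pi_2(B,b)$, every morphism of $\GcoppiB$ is a finite formal composite of morphisms taken from $B_1$ and from $\intGrot$, subject only to the relations that composition may be carried out inside either factor and that a morphism lying in the shared subcategory $\bigsqcup\pi_2(B,b)$ — a $2$-cell of the form $(\id_b,\varphi)$, identified with $\varphi\in\pi_2(B,b)$ — may be regarded as belonging to either one. Hence, in order to describe $\boxminus$, it is enough to compute the composite of a morphism of one factor with a morphism of the other, in both orders; together with composition inside each factor this gives exactly the four cases.

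Cases~1 and~2 require nothing: if both composands lie in $B_1$ their composite is the vertical composite in $B_1$, and if both lie in $\intGrot$ their composite is the one computed in Observation~\ref{obs:GrothendieckConstPicture}, $(g,\phi)\boxminus(f,\varphi)=(gf,\phi\boxminus\Phi(g)(\varphi))$ — in either situation this is just functoriality of the canonical maps $B_1\to\GcoppiB$ and $\intGrot\to\GcoppiB$ of the pushout. For Case~3 — first a $2$-cell $\varphi$ of $B_1$ not of the form $(\id_b,\cdot)$, then a morphism $(f,\psi)$ of $\intGrot$ with $f\neq\id$ — I would observe that none of the defining relations of the pushout applies to such a pair: the only nontrivial identifications available identify $\pi_2$-elements across the two factors, or are instances of the sliding identity of Observation~\ref{obs:slidingdowntheorem}, and each of these needs a factor of the form $(\id_b,\varphi)$ in the appropriate position. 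Hence $(f,\psi)\boxminus\varphi$ stays an irreducible formal concatenation.

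Case~4 is the one with content. Given $(f,\varphi)$ in $\intGrot$ (with $f\neq\id$) followed by a $2$-cell $\psi$ of $B_1$ that is not a $\pi_2$-element, I would first split $(f,\varphi)=(\id,\varphi)\boxminus(f,\id)$ inside $\intGrot$, which is immediate from the composition formula of Observation~\ref{obs:GrothendieckConstPicture} and $\Phi(\id)=\id$. Using associativity of $\boxminus$ in $\GcoppiB$ and the identification of $(\id,\varphi)$ with $\varphi\in\pi_2(B,a)\subseteq B_1$, the composite $\psi\boxminus(\id,\varphi)$ is then computed inside $B_1$ (Case~1 with one argument in the overlap), producing the $2$-cell $\psi\boxminus\varphi$ of $B$, so that
\[
\psi\boxminus(f,\varphi)=\psi\boxminus[(\id,\varphi)\boxminus(f,\id)]=[\psi\boxminus\varphi]\boxminus(f,\id);
\]
since $\psi\boxminus\varphi$ is generically not a $\pi_2$-element, the right-hand side is a concatenation of the shape handled in Case~3, which is precisely the displayed picture.

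The step I expect to be the main obstacle is this last one: one has to be certain that the pushout genuinely identifies $(\id,\varphi)$ with $\varphi$ and that performing $\psi\boxminus(\id,\varphi)$ in $B_1$ rather than in $\intGrot$ yields the same morphism of $\GcoppiB$; this is exactly where the well-definedness of the pushout of Definition~\ref{def:PhiIntUnionB} and the sliding identity of Observation~\ref{obs:slidingdowntheorem} (and hence functoriality of $\Phi$) do the real work. Granting this, the assertion that Cases~1--4 describe all of $\boxminus$ follows because every morphism of $\GcoppiB$ is by construction a finite composite of morphisms of the two factors, and associativity reduces the composition of any two such composites to repeated instances of the four cases.
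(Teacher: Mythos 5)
Your proposal is correct and follows essentially the same route as the paper: Cases 1--3 are read off directly from the pushout presentation, and Case 4 is handled by splitting $(f,\varphi)=(\id,\varphi)\boxminus(f,\id)$ as in Observation \ref{obs:slidingdowntheorem}, identifying $(\id,\varphi)$ with $\varphi$ in the overlap, and composing $\psi\boxminus\varphi$ inside $B_1$. The subtlety you flag about well-definedness of the identification across the pushout is exactly the point the paper's construction is designed to handle.
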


\noindent We now proceed to the proof of Theorem \ref{thm:main}. We begin with the construction of the double category $\crossb$.

\

\noindent \textbf{Construction of $\crossb$}: Let $\decB\in\twocatast$ and $\Phi\in \EndCat$. The construction of $\crossb$ will be performed in a series of steps: 

\

\noindent \textbf{1. Underlying categories:} We make the category of objects $\crossb_0$ of $\crossb$ be $\stB$. We make the category of morphisms $\crossb_1$ of $\crossb$ be the category $\GcoppiB$ of Definition \ref{def:PhiIntUnionB}. 

\

\noindent \textbf{2. Unit functor:} The unit functor $U:\crossb_0\to\crossb_1$ of $\crossb$ will be the functor:

\begin{center}
\begin{tikzpicture}
\matrix(m)[matrix of math nodes, row sep=4em, column sep=2.5em,text height=1.5ex, text depth=0.25ex]
{\stB & \intGrot & \GcopB & \GcoppiB\\};
\path[->,font=\scriptsize,>=angle 90]
(m-1-1) edge node [above]{$U$} (m-1-2)
(m-1-3) edge node [above]{$\rho$} (m-1-4)
;
\path[->,font=\scriptsize,>=angle 90]
(m-1-2) edge node [above]{} (m-1-3)
;
\end{tikzpicture}
\end{center}
    
\noindent Given a morphism $f\in \decB$, the square $U(f)$ is thus defined by the pictorial equation:

\begin{center}
\tikzset{every picture/.style={line width=0.75pt}} 

\begin{tikzpicture}[x=0.75pt,y=0.75pt,yscale=-1,xscale=1]

\draw [color={rgb, 255:red, 208; green, 2; blue, 27 }  ,draw opacity=1 ][fill={rgb, 255:red, 208; green, 2; blue, 27 }  ,fill opacity=1 ]   (28,105) -- (96,105.5) ;
\draw [color={rgb, 255:red, 0; green, 0; blue, 0 }  ,draw opacity=1 ]   (28,105) -- (27,174) ;
\draw [color={rgb, 255:red, 208; green, 2; blue, 27 }  ,draw opacity=1 ]   (27,174) -- (95,174.5) ;
\draw [color={rgb, 255:red, 0; green, 0; blue, 0 }  ,draw opacity=1 ]   (96,105.5) -- (95,174.5) ;
\draw [color={rgb, 255:red, 208; green, 2; blue, 27 }  ,draw opacity=1 ][fill={rgb, 255:red, 208; green, 2; blue, 27 }  ,fill opacity=1 ]   (177,108) -- (245,108.5) ;
\draw [color={rgb, 255:red, 0; green, 0; blue, 0 }  ,draw opacity=1 ]   (177,108) -- (176,177) ;
\draw [color={rgb, 255:red, 208; green, 2; blue, 27 }  ,draw opacity=1 ]   (176,177) -- (244,177.5) ;
\draw [color={rgb, 255:red, 0; green, 0; blue, 0 }  ,draw opacity=1 ]   (245,108.5) -- (244,177.5) ;

\draw (10,129.4) node [anchor=north west][inner sep=0.75pt]  [color={rgb, 255:red, 0; green, 0; blue, 0 }  ,opacity=1 ]  {$f$};
\draw (102,130.4) node [anchor=north west][inner sep=0.75pt]  [color={rgb, 255:red, 0; green, 0; blue, 0 }  ,opacity=1 ]  {$f$};
\draw (54,132.4) node [anchor=north west][inner sep=0.75pt]    {$U$};
\draw (159,132.4) node [anchor=north west][inner sep=0.75pt]  [color={rgb, 255:red, 0; green, 0; blue, 0 }  ,opacity=1 ]  {$f$};
\draw (251,133.4) node [anchor=north west][inner sep=0.75pt]  [color={rgb, 255:red, 0; green, 0; blue, 0 }  ,opacity=1 ]  {$f$};
\draw (201.4,135.4) node [anchor=north west][inner sep=0.75pt]    {$id$};
\draw (122,132.4) node [anchor=north west][inner sep=0.75pt]    {$:=$};
\end{tikzpicture}
\end{center}

\

\noindent \textbf{3. Frame functors:} Observe that there is a functor $\intGrot\sqcup B_1\to \stB$ given by the universal property of the coproduct applied to the fibred functor $P:\intGrot \to \stB$ and the domain function $dom: B_1\to \stB$. Clearly we have that $P((id,\varphi))=dom(\varphi)=id$, then we obtain a left frame functor $L: \intGrot\sqcup_\pi  B \to \stB$. The right frame functor $R:\GcoppiB \to \stB$ for $\crossb$ is obtained analogously. These two functors match with the convention for the pictorial representation of $(f,\varphi)$ in Observation \ref{obs:GrothendieckConstPicture}. 

\

\noindent \textbf{4. Horizontal composition:} We define the horizontal composition functor for $\crossb$ in parts. First consider the functor $\boxvert:\intGrot\times_{\stB}\intGrot \to \intGrot$ defined as $(f, \varphi) \boxvert (f, \psi)=(f, \psi\boxvert\varphi)$. Pictorially:

\begin{center}
\tikzset{every picture/.style={line width=0.75pt}} 

\begin{tikzpicture}[x=0.75pt,y=0.75pt,yscale=-1,xscale=1]

\draw [color={rgb, 255:red, 208; green, 2; blue, 27 }  ,draw opacity=1 ][fill={rgb, 255:red, 208; green, 2; blue, 27 }  ,fill opacity=1 ]   (36,431) -- (104,431) ;
\draw [color={rgb, 255:red, 0; green, 0; blue, 0 }  ,draw opacity=1 ]   (36,431) -- (36,499) ;
\draw [color={rgb, 255:red, 208; green, 2; blue, 27 }  ,draw opacity=1 ]   (36,499) -- (104,499) ;
\draw [color={rgb, 255:red, 0; green, 0; blue, 0 }  ,draw opacity=1 ]   (104,431) -- (104,499) ;
\draw [color={rgb, 255:red, 208; green, 2; blue, 27 }  ,draw opacity=1 ][fill={rgb, 255:red, 208; green, 2; blue, 27 }  ,fill opacity=1 ]   (104,431) -- (172,431) ;
\draw [color={rgb, 255:red, 208; green, 2; blue, 27 }  ,draw opacity=1 ]   (104,499) -- (172,499) ;
\draw [color={rgb, 255:red, 0; green, 0; blue, 0 }  ,draw opacity=1 ]   (172,431) -- (172,499) ;
\draw [color={rgb, 255:red, 208; green, 2; blue, 27 }  ,draw opacity=1 ][fill={rgb, 255:red, 208; green, 2; blue, 27 }  ,fill opacity=1 ]   (252,434) -- (320,434) ;
\draw [color={rgb, 255:red, 0; green, 0; blue, 0 }  ,draw opacity=1 ]   (252,434) -- (252,502) ;
\draw [color={rgb, 255:red, 208; green, 2; blue, 27 }  ,draw opacity=1 ]   (252,502) -- (320,502) ;
\draw [color={rgb, 255:red, 0; green, 0; blue, 0 }  ,draw opacity=1 ]   (320,434) -- (320,502) ;

\draw (15,453.4) node [anchor=north west][inner sep=0.75pt]  [color={rgb, 255:red, 0; green, 0; blue, 0 }  ,opacity=1 ]  {$f$};
\draw (176,455.4) node [anchor=north west][inner sep=0.75pt]  [color={rgb, 255:red, 0; green, 0; blue, 0 }  ,opacity=1 ]  {$f$};
\draw (59,455.4) node [anchor=north west][inner sep=0.75pt]    {$\varphi $};
\draw (85,455.4) node [anchor=north west][inner sep=0.75pt]  [color={rgb, 255:red, 0; green, 0; blue, 0 }  ,opacity=1 ]  {$f$};
\draw (130,454.4) node [anchor=north west][inner sep=0.75pt]    {$\psi $};
\draw (200,458.4) node [anchor=north west][inner sep=0.75pt]    {$=$};
\draw (235,455.4) node [anchor=north west][inner sep=0.75pt]  [color={rgb, 255:red, 0; green, 0; blue, 0 }  ,opacity=1 ]  {$f$};
\draw (324,455.4) node [anchor=north west][inner sep=0.75pt]  [color={rgb, 255:red, 0; green, 0; blue, 0 }  ,opacity=1 ]  {$f$};
\draw (266.4,456.4) node [anchor=north west][inner sep=0.75pt]    {$\psi \boxvert \varphi $};
\end{tikzpicture}
\end{center}

\noindent Horizontal composition of $B$ defines a functor $\boxvert: B_1 \times_{B^*} B_1 \to B_1$. Observe that $(id,\varphi)\boxvert(id,\psi) = (id, \psi\boxvert\varphi)=\varphi\boxvert \psi$. The two composition operations defined above thus coincide in $\bigsqcup\pi_2(B,b)$. We obtain a horizontal composition functor $$\boxvert: \left( \GcoppiB \right) \times_{B^*} \left(\GcoppiB \right)  \to \GcoppiB$$
    \noindent making the following diagram commute:

\begin{center}
\begin{tikzpicture}
\matrix(m)[matrix of math nodes, row sep=4em, column sep=2em,text height=1.5ex, text depth=0.25ex]
{\intGrot\times_{\stB}\intGrot & \left(\GcoppiB\right)\times_{\stB}\left(\GcoppiB\right) & B_1\times_{\stB} B_1\\
\intGrot & \left(\GcoppiB\right) & B_1\\};
\path[->,font=\scriptsize,>=angle 90]
(m-1-1) edge node []{} (m-1-2)
        edge node [left]{$\boxvert$} (m-2-1)
(m-2-1) edge node []{}(m-2-2)
(m-1-2) edge node [left]{$\boxvert$}(m-2-2)
(m-1-3) edge node [right]{$\boxvert$}(m-2-3)
        edge node []{} (m-1-2)
(m-2-3) edge node []{}(m-2-2)
;
\end{tikzpicture}
\end{center}

    \noindent \textbf{5. Relations:} We check that the relations between the structure functors defined above, making $\crossb$ into a double category are satisfied. Let $f$ be a morphism in $B^*$, by definition of $L$ and $U$ we have that $LU(f) = L((f,id))= f$, so $LU = id_{B^*}$. In the same way, we have the equation $RU=id_{B^*}$. Now, let $\varphi, \psi$ two morphisms in $\GcoppiB$ such that $R(\varphi)$=$L(\psi)$. If $\varphi,\psi$ are both in $B_1$, then $L(\varphi\boxvert \psi) = L(\varphi)$ and $R(\varphi\boxvert \psi) = R(\psi)$. Similarly, if $\varphi,\psi$ are in $\int_{B^*} \Phi$ by definition of $\boxvert$ we have that that $L(\varphi\boxvert \psi) = L(\varphi)$ and $R(\varphi\boxvert \psi) = R(\psi)$. We conclude that $L\boxvert = L\pi_l$ and $R\boxvert=R\pi_r$ where $\pi_l$ is and $\pi_r$ are the projections of the fibered product. We conclude that with the structure defined above $\crossb$ is a double category.

\

\noindent \textbf{$\crossb$ is a globularly generated internalization of length 1:} We finalize the proof of Theorem \ref{thm:main} by proving that the double category $\crossb$ indeed is a globularly generated internalization of $\decB$ of length 1. We first prove that $\crossb$ is globularly generated and that $\ell(\crossb )=1$. By Observaton \ref{obs:slidingdowntheorem}, every square appearing in Definition \ref{def:PhiIntUnionB} admits a decomposition in canonical form. By Lemma \ref{lem:canonicalform}, the double category $\crossb$ is globularly generated and of length 1. We now prove the equation $H^\ast (\crossb)=\decB$. Squares in $\crossb$ are either squares of the form
\begin{center}

\tikzset{every picture/.style={line width=0.75pt}} 

\begin{tikzpicture}[x=0.75pt,y=0.75pt,yscale=-1,xscale=1]

\draw    (241,140.6) -- (291,140.6) ;
\draw    (241,190.6) -- (291,190.6) ;
\draw [color={rgb, 255:red, 208; green, 2; blue, 27 }  ,draw opacity=1 ]   (241,140.6) -- (241,190.6) ;
\draw [color={rgb, 255:red, 208; green, 2; blue, 27 }  ,draw opacity=1 ]   (291,140.6) -- (291,190.6) ;

\end{tikzpicture}

\end{center}
\noindent representing 2-cells in $B$, or admit decompositions of the form
\begin{center}
\tikzset{every picture/.style={line width=0.75pt}} 

\begin{tikzpicture}[x=0.75pt,y=0.75pt,yscale=-1,xscale=1]

\draw    (521,851) -- (580,851) ;
\draw [color={rgb, 255:red, 208; green, 2; blue, 27 }  ,draw opacity=1 ][fill={rgb, 255:red, 208; green, 2; blue, 27 }  ,fill opacity=1 ]   (521,851) -- (521,910) ;
\draw [color={rgb, 255:red, 208; green, 2; blue, 27 }  ,draw opacity=1 ][fill={rgb, 255:red, 208; green, 2; blue, 27 }  ,fill opacity=1 ]   (580,851) -- (580,910) ;
\draw [color={rgb, 255:red, 208; green, 2; blue, 27 }  ,draw opacity=1 ][fill={rgb, 255:red, 208; green, 2; blue, 27 }  ,fill opacity=1 ]   (521,910) -- (580,910) ;
\draw [color={rgb, 255:red, 208; green, 2; blue, 27 }  ,draw opacity=1 ][fill={rgb, 255:red, 208; green, 2; blue, 27 }  ,fill opacity=1 ]   (521,969) -- (580,969) ;
\draw [color={rgb, 255:red, 208; green, 2; blue, 27 }  ,draw opacity=1 ][fill={rgb, 255:red, 208; green, 2; blue, 27 }  ,fill opacity=1 ]   (580,1028) -- (580,969) ;
\draw [color={rgb, 255:red, 208; green, 2; blue, 27 }  ,draw opacity=1 ][fill={rgb, 255:red, 208; green, 2; blue, 27 }  ,fill opacity=1 ]   (521,1028) -- (521,969) ;
\draw    (521,1028) -- (580,1028) ;
\draw    (580,910) -- (580,969) ;
\draw    (521,910) -- (521,969) ;

\draw (543,932.4) node [anchor=north west][inner sep=0.75pt]  [color={rgb, 255:red, 0; green, 0; blue, 0 }  ,opacity=1 ]  {$U$};
\draw (589,930.4) node [anchor=north west][inner sep=0.75pt]  [color={rgb, 255:red, 0; green, 0; blue, 0 }  ,opacity=1 ]  {$f$};
\draw (502,929.4) node [anchor=north west][inner sep=0.75pt]  [color={rgb, 255:red, 0; green, 0; blue, 0 }  ,opacity=1 ]  {$f$};
\draw (540,871.4) node [anchor=north west][inner sep=0.75pt]    {$\varphi_\uparrow$};
\draw (540.4,990.4) node [anchor=north west][inner sep=0.75pt]    {$\varphi _\downarrow$};
\end{tikzpicture}
\end{center}
\noindent A square as above is globular if and only if $f$ is an identity, in which case it would be a vertical composition of 2-cells in $B$ and thus would itself be a 2-cell in $B$. This proves the equation $H^\ast \crossb=\decB$. This concludes the proof of the Theorem. 
$\square$

\begin{note} A construction analogous to that appearing in Observation \ref{obs:compositionpi} can be associated to an opindexing. The construction and the arguments appearing in the proof of Theorem \ref{thm:main} for the case of opindexings is completely analogous as to those presented above.
\end{note}

\section{Examples}\label{s:examples}
\begin{ex}\label{ex:groupsdoublecat}
    Let $G$ be a group. Let $A$ be an abelian group. Let $\Phi\in\pi_2\textbf{Ind}_{(\Omega G,2\Omega A)}$. Example \ref{ex:abeliangroups} says that $\Phi$ can be identified with an action $\Phi:G\acts A$ of $G$ on $A$ by automorphisms. The 2-category $2\Omega A$ has a single 0- and 1-cell. Every square in $2\Omega A\rtimes_\Phi\Omega G$ is thus of the form
    \begin{center}

\tikzset{every picture/.style={line width=0.75pt}} 

\begin{tikzpicture}[x=0.75pt,y=0.75pt,yscale=-1,xscale=1]

\draw [color={rgb, 255:red, 208; green, 2; blue, 27 }  ,draw opacity=1 ]   (140,141.6) -- (190,141.6) ;
\draw [color={rgb, 255:red, 208; green, 2; blue, 27 }  ,draw opacity=1 ]   (140,191.6) -- (190,191.6) ;
\draw    (140,141.6) -- (140,191.6) ;
\draw    (190,141.6) -- (190,191.6) ;

\draw (192.36,160.04) node [anchor=north west][inner sep=0.75pt]  [font=\scriptsize]  {$g$};
\draw (129.36,160.04) node [anchor=north west][inner sep=0.75pt]  [font=\scriptsize]  {$g$};
\draw (160.61,160.9) node [anchor=north west][inner sep=0.75pt]  [font=\scriptsize]  {$a$};

\end{tikzpicture}

    \end{center}
    \noindent for some $g\in G$ and $a\in A$. By Observation \ref{obs:slidingdowntheorem} every such square can be subdivided as
    \begin{center}

\tikzset{every picture/.style={line width=0.75pt}} 

\begin{tikzpicture}[x=0.75pt,y=0.75pt,yscale=-1,xscale=1]

\draw [color={rgb, 255:red, 208; green, 2; blue, 27 }  ,draw opacity=1 ]   (160,161.6) -- (210,161.6) ;
\draw [color={rgb, 255:red, 208; green, 2; blue, 27 }  ,draw opacity=1 ]   (160,211.6) -- (210,211.6) ;
\draw    (160,161.6) -- (160,211.6) ;
\draw    (210,161.6) -- (210,211.6) ;
\draw [color={rgb, 255:red, 208; green, 2; blue, 27 }  ,draw opacity=1 ]   (160,261.6) -- (210,261.6) ;
\draw [color={rgb, 255:red, 208; green, 2; blue, 27 }  ,draw opacity=1 ]   (160,211.6) -- (160,261.6) ;
\draw [color={rgb, 255:red, 208; green, 2; blue, 27 }  ,draw opacity=1 ]   (210,211.6) -- (210,261.6) ;

\draw (212.36,180.04) node [anchor=north west][inner sep=0.75pt]  [font=\scriptsize]  {$g$};
\draw (149.36,180.04) node [anchor=north west][inner sep=0.75pt]  [font=\scriptsize]  {$g$};
\draw (180.61,228.9) node [anchor=north west][inner sep=0.75pt]  [font=\scriptsize]  {$a$};
\draw (177.61,182.9) node [anchor=north west][inner sep=0.75pt]  [font=\scriptsize]  {$U$};

\end{tikzpicture}

    \end{center}
    \noindent The horizontal composition of two such squares
    \begin{center}

\tikzset{every picture/.style={line width=0.75pt}} 

\begin{tikzpicture}[x=0.75pt,y=0.75pt,yscale=-1,xscale=1]

\draw [color={rgb, 255:red, 208; green, 2; blue, 27 }  ,draw opacity=1 ]   (180,179.6) -- (230,179.6) ;
\draw [color={rgb, 255:red, 208; green, 2; blue, 27 }  ,draw opacity=1 ]   (180,229.6) -- (230,229.6) ;
\draw    (180,179.6) -- (180,229.6) ;
\draw    (230,179.6) -- (230,229.6) ;
\draw [color={rgb, 255:red, 208; green, 2; blue, 27 }  ,draw opacity=1 ]   (180,279.6) -- (230,279.6) ;
\draw [color={rgb, 255:red, 208; green, 2; blue, 27 }  ,draw opacity=1 ]   (180,229.6) -- (180,279.6) ;
\draw [color={rgb, 255:red, 208; green, 2; blue, 27 }  ,draw opacity=1 ]   (230,229.6) -- (230,279.6) ;
\draw [color={rgb, 255:red, 208; green, 2; blue, 27 }  ,draw opacity=1 ]   (290,179.6) -- (340,179.6) ;
\draw [color={rgb, 255:red, 208; green, 2; blue, 27 }  ,draw opacity=1 ]   (290,229.6) -- (340,229.6) ;
\draw    (290,179.6) -- (290,229.6) ;
\draw    (340,179.6) -- (340,229.6) ;
\draw [color={rgb, 255:red, 208; green, 2; blue, 27 }  ,draw opacity=1 ]   (290,279.6) -- (340,279.6) ;
\draw [color={rgb, 255:red, 208; green, 2; blue, 27 }  ,draw opacity=1 ]   (290,229.6) -- (290,279.6) ;
\draw [color={rgb, 255:red, 208; green, 2; blue, 27 }  ,draw opacity=1 ]   (340,229.6) -- (340,279.6) ;

\draw (232.36,198.04) node [anchor=north west][inner sep=0.75pt]  [font=\scriptsize]  {$g$};
\draw (169.36,198.04) node [anchor=north west][inner sep=0.75pt]  [font=\scriptsize]  {$g$};
\draw (200.61,246.9) node [anchor=north west][inner sep=0.75pt]  [font=\scriptsize]  {$a$};
\draw (342.36,198.04) node [anchor=north west][inner sep=0.75pt]  [font=\scriptsize]  {$g$};
\draw (279.36,198.04) node [anchor=north west][inner sep=0.75pt]  [font=\scriptsize]  {$g$};
\draw (310.61,246.9) node [anchor=north west][inner sep=0.75pt]  [font=\scriptsize]  {$b$};
\draw (197.61,203.4) node [anchor=north west][inner sep=0.75pt]  [font=\scriptsize]  {$U$};
\draw (307.61,202.4) node [anchor=north west][inner sep=0.75pt]  [font=\scriptsize]  {$U$};

\end{tikzpicture}

    \end{center}
    \noindent is the square
    \begin{center}

\tikzset{every picture/.style={line width=0.75pt}} 

\begin{tikzpicture}[x=0.75pt,y=0.75pt,yscale=-1,xscale=1]

\draw [color={rgb, 255:red, 208; green, 2; blue, 27 }  ,draw opacity=1 ]   (269,112) -- (319,112) ;
\draw [color={rgb, 255:red, 208; green, 2; blue, 27 }  ,draw opacity=1 ]   (269,162) -- (319,162) ;
\draw    (269,112) -- (269,162) ;
\draw    (319,112) -- (319,162) ;
\draw [color={rgb, 255:red, 208; green, 2; blue, 27 }  ,draw opacity=1 ]   (269,212) -- (319,212) ;
\draw [color={rgb, 255:red, 208; green, 2; blue, 27 }  ,draw opacity=1 ]   (269,162) -- (269,212) ;
\draw [color={rgb, 255:red, 208; green, 2; blue, 27 }  ,draw opacity=1 ]   (319,162) -- (319,212) ;

\draw (321.36,130.44) node [anchor=north west][inner sep=0.75pt]  [font=\scriptsize]  {$g$};
\draw (258.36,130.44) node [anchor=north west][inner sep=0.75pt]  [font=\scriptsize]  {$g$};
\draw (287.61,179.3) node [anchor=north west][inner sep=0.75pt]  [font=\scriptsize]  {$ab$};
\draw (287.61,134.3) node [anchor=north west][inner sep=0.75pt]  [font=\scriptsize]  {$U$};

\end{tikzpicture}

    \end{center}
    \noindent Now, the vertical composition of two squares:
    \begin{center}

\tikzset{every picture/.style={line width=0.75pt}} 

\begin{tikzpicture}[x=0.75pt,y=0.75pt,yscale=-1,xscale=1]

\draw [color={rgb, 255:red, 208; green, 2; blue, 27 }  ,draw opacity=1 ]   (200,190) -- (250,190) ;
\draw [color={rgb, 255:red, 208; green, 2; blue, 27 }  ,draw opacity=1 ]   (200,240) -- (250,240) ;
\draw    (200,190) -- (200,240) ;
\draw    (250,190) -- (250,240) ;
\draw [color={rgb, 255:red, 208; green, 2; blue, 27 }  ,draw opacity=1 ]   (200,290) -- (250,290) ;
\draw [color={rgb, 255:red, 208; green, 2; blue, 27 }  ,draw opacity=1 ]   (200,240) -- (200,290) ;
\draw [color={rgb, 255:red, 208; green, 2; blue, 27 }  ,draw opacity=1 ]   (250,240) -- (250,290) ;
\draw [color={rgb, 255:red, 208; green, 2; blue, 27 }  ,draw opacity=1 ]   (310,190) -- (360,190) ;
\draw [color={rgb, 255:red, 208; green, 2; blue, 27 }  ,draw opacity=1 ]   (310,240) -- (360,240) ;
\draw    (310,190) -- (310,240) ;
\draw    (360,190) -- (360,240) ;
\draw [color={rgb, 255:red, 208; green, 2; blue, 27 }  ,draw opacity=1 ]   (310,290) -- (360,290) ;
\draw [color={rgb, 255:red, 208; green, 2; blue, 27 }  ,draw opacity=1 ]   (310,240) -- (310,290) ;
\draw [color={rgb, 255:red, 208; green, 2; blue, 27 }  ,draw opacity=1 ]   (360,240) -- (360,290) ;

\draw (252.36,208.44) node [anchor=north west][inner sep=0.75pt]  [font=\scriptsize]  {$g$};
\draw (189.36,208.44) node [anchor=north west][inner sep=0.75pt]  [font=\scriptsize]  {$g$};
\draw (220.61,257.3) node [anchor=north west][inner sep=0.75pt]  [font=\scriptsize]  {$a$};
\draw (362.36,208.44) node [anchor=north west][inner sep=0.75pt]  [font=\scriptsize]  {$h$};
\draw (299.36,208.44) node [anchor=north west][inner sep=0.75pt]  [font=\scriptsize]  {$h$};
\draw (330.61,257.3) node [anchor=north west][inner sep=0.75pt]  [font=\scriptsize]  {$b$};
\draw (219.36,212.44) node [anchor=north west][inner sep=0.75pt]  [font=\scriptsize]  {$U$};
\draw (328.56,211.64) node [anchor=north west][inner sep=0.75pt]  [font=\scriptsize]  {$U$};

\end{tikzpicture}

    \end{center}
    \noindent is the square
    \begin{center}

\tikzset{every picture/.style={line width=0.75pt}} 

\begin{tikzpicture}[x=0.75pt,y=0.75pt,yscale=-1,xscale=1]

\draw [color={rgb, 255:red, 208; green, 2; blue, 27 }  ,draw opacity=1 ]   (328,60.4) -- (378,60.4) ;
\draw [color={rgb, 255:red, 208; green, 2; blue, 27 }  ,draw opacity=1 ]   (328,110.4) -- (378,110.4) ;
\draw    (328,60.4) -- (328,110.4) ;
\draw    (378,60.4) -- (378,110.4) ;
\draw [color={rgb, 255:red, 208; green, 2; blue, 27 }  ,draw opacity=1 ]   (328,110.4) -- (328,160.4) ;
\draw [color={rgb, 255:red, 208; green, 2; blue, 27 }  ,draw opacity=1 ]   (378,110.4) -- (378,160.4) ;
\draw [color={rgb, 255:red, 208; green, 2; blue, 27 }  ,draw opacity=1 ]   (328,160.4) -- (378,160.4) ;
\draw [color={rgb, 255:red, 208; green, 2; blue, 27 }  ,draw opacity=1 ]   (328,210) -- (378,210) ;
\draw    (328,160) -- (328,210) ;
\draw    (378,160) -- (378,210) ;
\draw [color={rgb, 255:red, 208; green, 2; blue, 27 }  ,draw opacity=1 ]   (328,260) -- (378,260) ;
\draw [color={rgb, 255:red, 208; green, 2; blue, 27 }  ,draw opacity=1 ]   (328,210) -- (328,260) ;
\draw [color={rgb, 255:red, 208; green, 2; blue, 27 }  ,draw opacity=1 ]   (378,210) -- (378,260) ;

\draw (380.36,78.84) node [anchor=north west][inner sep=0.75pt]  [font=\scriptsize]  {$g$};
\draw (317.36,78.84) node [anchor=north west][inner sep=0.75pt]  [font=\scriptsize]  {$g$};
\draw (348.61,127.7) node [anchor=north west][inner sep=0.75pt]  [font=\scriptsize]  {$a$};
\draw (380.36,178.44) node [anchor=north west][inner sep=0.75pt]  [font=\scriptsize]  {$h$};
\draw (317.36,178.44) node [anchor=north west][inner sep=0.75pt]  [font=\scriptsize]  {$h$};
\draw (348.61,227.3) node [anchor=north west][inner sep=0.75pt]  [font=\scriptsize]  {$b$};
\draw (346.21,82.5) node [anchor=north west][inner sep=0.75pt]  [font=\scriptsize]  {$U$};
\draw (345.81,180.8) node [anchor=north west][inner sep=0.75pt]  [font=\scriptsize]  {$U$};

\end{tikzpicture}

    \end{center}
    \noindent Which is equal to the square:
    \begin{center}

\tikzset{every picture/.style={line width=0.75pt}} 

\begin{tikzpicture}[x=0.75pt,y=0.75pt,yscale=-1,xscale=1]

\draw [color={rgb, 255:red, 208; green, 2; blue, 27 }  ,draw opacity=1 ]   (290,89) -- (340,89) ;
\draw [color={rgb, 255:red, 208; green, 2; blue, 27 }  ,draw opacity=1 ]   (290,139) -- (340,139) ;
\draw    (290,89) -- (290,139) ;
\draw    (340,89) -- (340,139) ;
\draw [color={rgb, 255:red, 208; green, 2; blue, 27 }  ,draw opacity=1 ]   (290,189) -- (340,189) ;
\draw [color={rgb, 255:red, 208; green, 2; blue, 27 }  ,draw opacity=1 ]   (290,139) -- (290,189) ;
\draw [color={rgb, 255:red, 208; green, 2; blue, 27 }  ,draw opacity=1 ]   (340,139) -- (340,189) ;

\draw (342.36,107.44) node [anchor=north west][inner sep=0.75pt]  [font=\scriptsize]  {$gh$};
\draw (274.36,107.44) node [anchor=north west][inner sep=0.75pt]  [font=\scriptsize]  {$gh$};
\draw (297.61,158.3) node [anchor=north west][inner sep=0.75pt]  [font=\scriptsize]  {$\Phi _{h}( a) b$};
\draw (308.36,108.44) node [anchor=north west][inner sep=0.75pt]  [font=\scriptsize]  {$U$};

\end{tikzpicture}

    \end{center}
    \noindent The category of morphisms $2\Omega A\rtimes_\Phi \Omega G_1$ of $2\Omega A\rtimes_\Phi\Omega G$ can thus be identified with the delooping groupoid $\Omega(A\rtimes_\Phi G)$ of the semidirect product $A\rtimes_\Phi G$. An analogous argument identifies the category of mosphisms of internalizations of $(\Omega G,2\Omega A)$ of the form $2\Omega A\rtimes_\Phi \Omega G$ with the delooping groupoid of $A\rtimes_\Phi G^{op}$ for a $\pi_2$-opindexing $\Phi\in\pi_2\textbf{opInd}_{(\Omega G,2\Omega A)}$. 
\end{ex}
\begin{ex}\label{ex:tensorcatsdoublecats}
    Let $G$ be a group. Let $\mathbb{K}$ be a field, and let $C$ be a strict tensor category over $\mathbb{K}$. Let $\Phi\in \pi_2\textbf{Ind}_{(\Omega G,\Omega C)}$. Example \ref{ex:tensorcats} says that we can identify $\Phi$ with a character of $G$ on $\mathbb{K}$. Squares in $\Omega G\rtimes_\Phi \Omega C$ are of the form:
    \begin{center}

\tikzset{every picture/.style={line width=0.75pt}} 

\begin{tikzpicture}[x=0.75pt,y=0.75pt,yscale=-1,xscale=1]

\draw [color={rgb, 255:red, 0; green, 0; blue, 0 }  ,draw opacity=1 ]   (280,49.4) -- (330,49.4) ;
\draw [color={rgb, 255:red, 208; green, 2; blue, 27 }  ,draw opacity=1 ]   (280,99.4) -- (330,99.4) ;
\draw [color={rgb, 255:red, 208; green, 2; blue, 27 }  ,draw opacity=1 ]   (280,49.4) -- (280,99.4) ;
\draw [color={rgb, 255:red, 208; green, 2; blue, 27 }  ,draw opacity=1 ]   (330,49.4) -- (330,99.4) ;
\draw [color={rgb, 255:red, 0; green, 0; blue, 0 }  ,draw opacity=1 ]   (280,99.4) -- (280,149.4) ;
\draw [color={rgb, 255:red, 0; green, 0; blue, 0 }  ,draw opacity=1 ]   (330,99.4) -- (330,149.4) ;
\draw [color={rgb, 255:red, 208; green, 2; blue, 27 }  ,draw opacity=1 ]   (280,149.4) -- (330,149.4) ;
\draw [color={rgb, 255:red, 0; green, 0; blue, 0 }  ,draw opacity=1 ]   (280,199) -- (330,199) ;
\draw [color={rgb, 255:red, 208; green, 2; blue, 27 }  ,draw opacity=1 ]   (280,149) -- (280,199) ;
\draw [color={rgb, 255:red, 208; green, 2; blue, 27 }  ,draw opacity=1 ]   (330,149) -- (330,199) ;

\draw (332.36,115.84) node [anchor=north west][inner sep=0.75pt]  [font=\scriptsize]  {$g$};
\draw (269.36,114.84) node [anchor=north west][inner sep=0.75pt]  [font=\scriptsize]  {$g$};
\draw (301.01,34.3) node [anchor=north west][inner sep=0.75pt]  [font=\scriptsize]  {$a$};
\draw (298.36,165.44) node [anchor=north west][inner sep=0.75pt]  [font=\scriptsize]  {$\varphi _{\downarrow }$};
\draw (298.21,117.5) node [anchor=north west][inner sep=0.75pt]  [font=\scriptsize]  {$U$};
\draw (300.21,200.6) node [anchor=north west][inner sep=0.75pt]  [font=\scriptsize]  {$b$};
\draw (297.96,64.64) node [anchor=north west][inner sep=0.75pt]  [font=\scriptsize]  {$\varphi _{\uparrow }$};

\end{tikzpicture}

    \end{center}
    \noindent where $a,b$ are objects in $C$, $\varphi:a\to\mathbb{K}$ and $\psi:\mathbb{K}\to b$ are a costate on $a$ and a state in $b$ respectively, and where $g\in G$. The composition of two such squares is:
    \begin{center}

\tikzset{every picture/.style={line width=0.75pt}} 

\begin{tikzpicture}[x=0.75pt,y=0.75pt,yscale=-1,xscale=1]

\draw [color={rgb, 255:red, 0; green, 0; blue, 0 }  ,draw opacity=1 ]   (300,69.4) -- (350,69.4) ;
\draw [color={rgb, 255:red, 208; green, 2; blue, 27 }  ,draw opacity=1 ]   (300,119.4) -- (350,119.4) ;
\draw [color={rgb, 255:red, 208; green, 2; blue, 27 }  ,draw opacity=1 ]   (300,69.4) -- (300,119.4) ;
\draw [color={rgb, 255:red, 208; green, 2; blue, 27 }  ,draw opacity=1 ]   (350,69.4) -- (350,119.4) ;
\draw [color={rgb, 255:red, 0; green, 0; blue, 0 }  ,draw opacity=1 ]   (300,119.4) -- (300,169.4) ;
\draw [color={rgb, 255:red, 0; green, 0; blue, 0 }  ,draw opacity=1 ]   (350,119.4) -- (350,169.4) ;
\draw [color={rgb, 255:red, 208; green, 2; blue, 27 }  ,draw opacity=1 ]   (300,169.4) -- (350,169.4) ;
\draw [color={rgb, 255:red, 0; green, 0; blue, 0 }  ,draw opacity=1 ]   (299.67,220.67) -- (349.67,220.67) ;
\draw [color={rgb, 255:red, 208; green, 2; blue, 27 }  ,draw opacity=1 ]   (300,169) -- (300,219) ;
\draw [color={rgb, 255:red, 208; green, 2; blue, 27 }  ,draw opacity=1 ]   (350,169) -- (350,219) ;
\draw [color={rgb, 255:red, 208; green, 2; blue, 27 }  ,draw opacity=1 ]   (300,268.4) -- (350,268.4) ;
\draw [color={rgb, 255:red, 208; green, 2; blue, 27 }  ,draw opacity=1 ]   (300,218.4) -- (300,268.4) ;
\draw [color={rgb, 255:red, 208; green, 2; blue, 27 }  ,draw opacity=1 ]   (350,218.4) -- (350,268.4) ;
\draw [color={rgb, 255:red, 0; green, 0; blue, 0 }  ,draw opacity=1 ]   (300,268.4) -- (300,318.4) ;
\draw [color={rgb, 255:red, 0; green, 0; blue, 0 }  ,draw opacity=1 ]   (350,268.4) -- (350,318.4) ;
\draw [color={rgb, 255:red, 208; green, 2; blue, 27 }  ,draw opacity=1 ]   (300,318.4) -- (350,318.4) ;
\draw [color={rgb, 255:red, 0; green, 0; blue, 0 }  ,draw opacity=1 ]   (300,368) -- (350,368) ;
\draw [color={rgb, 255:red, 208; green, 2; blue, 27 }  ,draw opacity=1 ]   (300,318) -- (300,368) ;
\draw [color={rgb, 255:red, 208; green, 2; blue, 27 }  ,draw opacity=1 ]   (350,318) -- (350,368) ;

\draw (352.36,135.84) node [anchor=north west][inner sep=0.75pt]  [font=\scriptsize]  {$g$};
\draw (289.36,134.84) node [anchor=north west][inner sep=0.75pt]  [font=\scriptsize]  {$g$};
\draw (321.01,54.3) node [anchor=north west][inner sep=0.75pt]  [font=\scriptsize]  {$a$};
\draw (318.36,185.44) node [anchor=north west][inner sep=0.75pt]  [font=\scriptsize]  {$\varphi _{\downarrow }$};
\draw (318.21,137.5) node [anchor=north west][inner sep=0.75pt]  [font=\scriptsize]  {$U$};
\draw (317.96,84.64) node [anchor=north west][inner sep=0.75pt]  [font=\scriptsize]  {$\varphi _{\uparrow }$};
\draw (352.36,284.84) node [anchor=north west][inner sep=0.75pt]  [font=\scriptsize]  {$h$};
\draw (289.36,283.84) node [anchor=north west][inner sep=0.75pt]  [font=\scriptsize]  {$h$};
\draw (318.36,334.44) node [anchor=north west][inner sep=0.75pt]  [font=\scriptsize]  {$\psi _{\downarrow }$};
\draw (318.21,286.5) node [anchor=north west][inner sep=0.75pt]  [font=\scriptsize]  {$U$};
\draw (320.21,369.6) node [anchor=north west][inner sep=0.75pt]  [font=\scriptsize]  {$c$};
\draw (317.96,233.64) node [anchor=north west][inner sep=0.75pt]  [font=\scriptsize]  {$\psi _{\uparrow }$};

\end{tikzpicture}

    \end{center}
    \noindent which is equal to
    \begin{center}

\tikzset{every picture/.style={line width=0.75pt}} 

\begin{tikzpicture}[x=0.75pt,y=0.75pt,yscale=-1,xscale=1]

\draw [color={rgb, 255:red, 0; green, 0; blue, 0 }  ,draw opacity=1 ]   (300,69.4) -- (350,69.4) ;
\draw [color={rgb, 255:red, 208; green, 2; blue, 27 }  ,draw opacity=1 ]   (300,119.4) -- (350,119.4) ;
\draw [color={rgb, 255:red, 208; green, 2; blue, 27 }  ,draw opacity=1 ]   (300,69.4) -- (300,119.4) ;
\draw [color={rgb, 255:red, 208; green, 2; blue, 27 }  ,draw opacity=1 ]   (350,69.4) -- (350,119.4) ;
\draw [color={rgb, 255:red, 0; green, 0; blue, 0 }  ,draw opacity=1 ]   (300,119.4) -- (300,169.4) ;
\draw [color={rgb, 255:red, 0; green, 0; blue, 0 }  ,draw opacity=1 ]   (350,119.4) -- (350,169.4) ;
\draw [color={rgb, 255:red, 208; green, 2; blue, 27 }  ,draw opacity=1 ]   (300,169.4) -- (350,169.4) ;
\draw [color={rgb, 255:red, 0; green, 0; blue, 0 }  ,draw opacity=1 ]   (300,219) -- (350,219) ;
\draw [color={rgb, 255:red, 208; green, 2; blue, 27 }  ,draw opacity=1 ]   (300,169) -- (300,219) ;
\draw [color={rgb, 255:red, 208; green, 2; blue, 27 }  ,draw opacity=1 ]   (350,169) -- (350,219) ;

\draw (352.36,135.84) node [anchor=north west][inner sep=0.75pt]  [font=\scriptsize]  {$gh$};
\draw (285.36,134.84) node [anchor=north west][inner sep=0.75pt]  [font=\scriptsize]  {$gh$};
\draw (321.01,54.3) node [anchor=north west][inner sep=0.75pt]  [font=\scriptsize]  {$a$};
\draw (318.36,185.44) node [anchor=north west][inner sep=0.75pt]  [font=\scriptsize]  {$\psi _{\downarrow }$};
\draw (318.21,137.5) node [anchor=north west][inner sep=0.75pt]  [font=\scriptsize]  {$U$};
\draw (320.21,220.6) node [anchor=north west][inner sep=0.75pt]  [font=\scriptsize]  {$b$};
\draw (317.96,84.64) node [anchor=north west][inner sep=0.75pt]  [font=\scriptsize]  {$\varphi _{\uparrow }$};
\draw (212,136.4) node [anchor=north west][inner sep=0.75pt]  [font=\scriptsize]  {$\Phi ( \varphi _{\downarrow } \boxminus \psi _{\uparrow })$};

\end{tikzpicture}

    \end{center}
    \noindent where the above notation represents multiplication by the scalar $\Phi(\varphi_\downarrow\boxminus\psi_\uparrow)$.
\end{ex}
\begin{obs}\label{obs:bracket}
    Example \ref{ex:tensorcatsdoublecats} suggests the following notational device: Given a decorated 2-category $\decB$ and $\Phi\in \EndCat$, we denote the square
    \begin{center}

\tikzset{every picture/.style={line width=0.75pt}} 

\begin{tikzpicture}[x=0.75pt,y=0.75pt,yscale=-1,xscale=1]

\draw [color={rgb, 255:red, 0; green, 0; blue, 0 }  ,draw opacity=1 ]   (300,69.4) -- (350,69.4) ;
\draw [color={rgb, 255:red, 208; green, 2; blue, 27 }  ,draw opacity=1 ]   (300,119.4) -- (350,119.4) ;
\draw [color={rgb, 255:red, 208; green, 2; blue, 27 }  ,draw opacity=1 ]   (300,69.4) -- (300,119.4) ;
\draw [color={rgb, 255:red, 208; green, 2; blue, 27 }  ,draw opacity=1 ]   (350,69.4) -- (350,119.4) ;
\draw [color={rgb, 255:red, 0; green, 0; blue, 0 }  ,draw opacity=1 ]   (300,119.4) -- (300,169.4) ;
\draw [color={rgb, 255:red, 0; green, 0; blue, 0 }  ,draw opacity=1 ]   (350,119.4) -- (350,169.4) ;
\draw [color={rgb, 255:red, 208; green, 2; blue, 27 }  ,draw opacity=1 ]   (300,169.4) -- (350,169.4) ;
\draw [color={rgb, 255:red, 0; green, 0; blue, 0 }  ,draw opacity=1 ]   (300,219) -- (350,219) ;
\draw [color={rgb, 255:red, 208; green, 2; blue, 27 }  ,draw opacity=1 ]   (300,169) -- (300,219) ;
\draw [color={rgb, 255:red, 208; green, 2; blue, 27 }  ,draw opacity=1 ]   (350,169) -- (350,219) ;

\draw (352.36,135.84) node [anchor=north west][inner sep=0.75pt]  [font=\scriptsize]  {$f$};
\draw (289.36,134.84) node [anchor=north west][inner sep=0.75pt]  [font=\scriptsize]  {$f$};
\draw (318.36,185.44) node [anchor=north west][inner sep=0.75pt]  [font=\scriptsize]  {$\varphi _{\downarrow }$};
\draw (318.21,137.5) node [anchor=north west][inner sep=0.75pt]  [font=\scriptsize]  {$U$};
\draw (317.96,84.64) node [anchor=north west][inner sep=0.75pt]  [font=\scriptsize]  {$\varphi _{\uparrow }$};

\end{tikzpicture}

    \end{center}
    \noindent in $\crossb$ as $|\varphi_\downarrow\rangle f\langle \varphi_\uparrow|$, in analogy with the Bra-ket notation. In that case the horizontal composition of two horizontally compatible squares $|\varphi_\downarrow\rangle f\langle \varphi_\uparrow|$ and $|\psi_\downarrow\rangle f\langle \psi_\uparrow|$ is equal to $|\varphi_\downarrow\boxvert\psi_\downarrow\rangle f\langle \varphi_\uparrow\boxvert\psi_\uparrow|$. We write the vertical composite of two vertically compatible squares $|\varphi_\downarrow\rangle f\langle \varphi_\uparrow|$ and $|\psi_\downarrow\rangle g\langle \psi_\uparrow|$ as the square $\langle\psi_\uparrow|\varphi_\downarrow\rangle|\psi_\downarrow\rangle gf\langle \varphi_\uparrow|$.
\end{obs}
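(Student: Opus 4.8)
The plan is to verify both identities by unwinding the structure of $\crossb$ built in the proof of Theorem \ref{thm:main} and then invoking the interchange law of $\crossb$ together with the composition rules of $\intGrot$ recorded in Observations \ref{obs:GrothendieckConstPicture} and \ref{obs:slidingdowntheorem}. First I would fix the dictionary: by the definitions of the unit functor and of vertical composition in $\crossb_1=\GcoppiB$, the symbol $|\varphi_\downarrow\rangle f\langle\varphi_\uparrow|$ denotes the vertical composite $\varphi_\downarrow\boxminus(f,\id)\boxminus\varphi_\uparrow$, where $\varphi_\uparrow$ and $\varphi_\downarrow$ are globular squares whose lower, resp.\ upper, vertical boundary is a horizontal identity; by the last paragraph of the proof of Theorem \ref{thm:main} every non-globular square of $\crossb$ has this shape, so the notation is exhaustive.

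For the horizontal identity, since the two squares are horizontally compatible they share the frame $f$, and the interchange law in $\crossb$ gives
\[|\varphi_\downarrow\rangle f\langle\varphi_\uparrow|\boxvert|\psi_\downarrow\rangle f\langle\psi_\uparrow|=(\varphi_\downarrow\boxvert\psi_\downarrow)\boxminus\bigl((f,\id)\boxvert(f,\id)\bigr)\boxminus(\varphi_\uparrow\boxvert\psi_\uparrow).\]
By the definition of horizontal composition on $\intGrot\times_{\stB}\intGrot$ one has $(f,\id)\boxvert(f,\id)=(f,\id\boxvert\id)=(f,\id)$, and $\varphi_\downarrow\boxvert\psi_\downarrow$ and $\varphi_\uparrow\boxvert\psi_\uparrow$ are again globular of the same shape, since the globular squares of $\crossb$ close up into the horizontal bicategory $H\crossb$ and horizontal identities compose to horizontal identities. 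The right-hand side is therefore $|\varphi_\downarrow\boxvert\psi_\downarrow\rangle f\langle\varphi_\uparrow\boxvert\psi_\uparrow|$, which is the claim.

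For the vertical identity, vertical compatibility forces the lower black boundary of $\varphi_\downarrow$ to agree with the upper black boundary of $\psi_\uparrow$, so $\psi_\uparrow\boxminus\varphi_\downarrow$ is a globular square both of whose vertical boundaries are horizontal identities, i.e.\ an element of $\pi_2(B,c)$ for the shared vertex $c$; under the identification of Definition \ref{def:PhiIntUnionB} this is $(\id_c,\psi_\uparrow\boxminus\varphi_\downarrow)$. Stacking and regrouping, $|\psi_\downarrow\rangle g\langle\psi_\uparrow|\boxminus|\varphi_\downarrow\rangle f\langle\varphi_\uparrow|$ equals $\psi_\downarrow\boxminus\bigl[(g,\id)\boxminus(\id_c,\psi_\uparrow\boxminus\varphi_\downarrow)\boxminus(f,\id)\bigr]\boxminus\varphi_\uparrow$, and the bracketed factor lies in $\intGrot$; applying the composition law of Observation \ref{obs:GrothendieckConstPicture} (equivalently the sliding identity of Observation \ref{obs:slidingdowntheorem}) collapses it to the single morphism $(gf,\,\Phi(g)(\psi_\uparrow\boxminus\varphi_\downarrow))$. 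Splitting this as $(\id,\Phi(g)(\psi_\uparrow\boxminus\varphi_\downarrow))\boxminus(gf,\id)$ and reabsorbing the $\pi_2$-factor into $\psi_\downarrow$ then yields $\langle\psi_\uparrow|\varphi_\downarrow\rangle\,|\psi_\downarrow\rangle gf\langle\varphi_\uparrow|$ with $\langle\psi_\uparrow|\varphi_\downarrow\rangle$ standing for $\Phi(g)(\psi_\uparrow\boxminus\varphi_\downarrow)$. The one point that needs genuine care — and the only real obstacle I foresee — is bookkeeping this $\Phi$-twist: the inner product produced by the contraction is the image of $\psi_\uparrow\boxminus\varphi_\downarrow$ under the automorphism $\Phi(g)$ by which the lower frame acts, not $\psi_\uparrow\boxminus\varphi_\downarrow$ itself. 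This is precisely the scalar $\Phi(\varphi_\downarrow\boxminus\psi_\uparrow)$ already seen in Example \ref{ex:tensorcatsdoublecats}, so the bra-ket notation is self-consistent once one adopts the convention $\langle\,\cdot\,|\,\cdot\,\rangle:=\Phi(g)(\,\cdot\,\boxminus\,\cdot\,)$; everything else is a direct application of the interchange law and the definitions of the structure functors of $\crossb$.
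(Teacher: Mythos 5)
Your proposal is correct and is essentially the computation the paper has in mind. The horizontal identity is exactly the content of the paper's Lemma~\ref{lem:canonicalform}: functoriality of $\boxvert$ over $\boxminus$ (interchange), together with $U_f\boxvert U_f=U_f$, yields the canonical decomposition $(\varphi_\downarrow\boxvert\psi_\downarrow)\boxminus U_f\boxminus(\varphi_\uparrow\boxvert\psi_\uparrow)$, and your write-up reproduces that argument almost verbatim. The vertical identity is a direct unwinding of the Grothendieck composition law from Observation~\ref{obs:GrothendieckConstPicture} and the sliding identity of Observation~\ref{obs:slidingdowntheorem}, as you do; this is the precise content of the paper's informal discussion in the subsection on vertical composition and canonical decompositions.

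Two small remarks. First, $\Phi(g)$ is a morphism of commutative monoids, not necessarily an automorphism; calling it an automorphism is a slip, though nothing in your argument uses invertibility. Second, the convention you adopt, $\langle\psi_\uparrow|\varphi_\downarrow\rangle:=\Phi(g)(\psi_\uparrow\boxminus\varphi_\downarrow)$, makes the bracket covertly depend on the frame $g$; the cleaner reading (and the one matching the paper's intent, as the informal sliding discussion suggests) is that $\langle\psi_\uparrow|\varphi_\downarrow\rangle$ denotes the untwisted element $\psi_\uparrow\boxminus\varphi_\downarrow\in\pi_2(B,c)$ sitting between the two unit squares, with the $\Phi(g)$-twist appearing when one formally slides that element past the $g$-frame to reach the canonical position above $\psi_\downarrow$. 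You correctly identify this as the only genuinely delicate point, and either bookkeeping convention gives the same composite square, so the proposal stands.
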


\begin{ex}\label{ex:spansdoublecats}
    Let $C$ be a category with enough pullbacks. Assume a choice of pullback for every span in $C$ has been made, so as to make $SpanC$ into a 2-category. The pair $(C,SpanC)$ is a decorated 2-category. Let $\Phi\in\pi_2\textbf{Ind}_{(C,SpanC)}$ be the unique $\pi_2$-indexing of $(C,SpanC)$. Every square in $SpanC$ of the form
    \begin{center}

\tikzset{every picture/.style={line width=0.75pt}} 

\begin{tikzpicture}[x=0.75pt,y=0.75pt,yscale=-1,xscale=1]

\draw    (241,140.6) -- (291,140.6) ;
\draw [color={rgb, 255:red, 208; green, 2; blue, 27 }  ,draw opacity=1 ]   (241,190.6) -- (291,190.6) ;
\draw [color={rgb, 255:red, 208; green, 2; blue, 27 }  ,draw opacity=1 ]   (241,140.6) -- (241,190.6) ;
\draw [color={rgb, 255:red, 208; green, 2; blue, 27 }  ,draw opacity=1 ]   (291,140.6) -- (291,190.6) ;

\end{tikzpicture}

    \end{center}
    \noindent is a span of the form
\begin{center}
    \begin{tikzpicture}
  \matrix (m) [matrix of math nodes,row sep=2em,column sep=2em,minimum width=2em]
  {
     &a& \\
     b&b&b \\};
  \path[-stealth]
    (m-1-2) edge node [left]{$f$}(m-2-1)
            edge node [left]{$f$}(m-2-2)
            edge node [right]{$f$}(m-2-3)
    (m-2-2) edge node [below] {$id$}(m-2-1)
            edge node [below] {$id$}(m-2-3);
\end{tikzpicture}
\end{center}
\noindent for a morphism $f$ in $C$. A square in $SpanC$ of the form
\begin{center}

\tikzset{every picture/.style={line width=0.75pt}} 

\begin{tikzpicture}[x=0.75pt,y=0.75pt,yscale=-1,xscale=1]

\draw [color={rgb, 255:red, 208; green, 2; blue, 27 }  ,draw opacity=1 ]   (261,160.6) -- (311,160.6) ;
\draw [color={rgb, 255:red, 0; green, 0; blue, 0 }  ,draw opacity=1 ]   (261,210.6) -- (311,210.6) ;
\draw [color={rgb, 255:red, 208; green, 2; blue, 27 }  ,draw opacity=1 ]   (261,160.6) -- (261,210.6) ;
\draw [color={rgb, 255:red, 208; green, 2; blue, 27 }  ,draw opacity=1 ]   (311,160.6) -- (311,210.6) ;

\end{tikzpicture}

\end{center}
\noindent is a span of the form
\begin{center}
    \begin{tikzpicture}
  \matrix (m) [matrix of math nodes,row sep=2em,column sep=2em,minimum width=2em]
  {
     c&c&c \\
      &d& \\};
  \path[-stealth]
    (m-2-2) edge node [left]{$h$}(m-1-1)
    (m-1-2) edge node [left]{$s$}(m-2-2)
    (m-2-2) edge node [right]{$h$}(m-1-3)
    (m-1-2) edge node [above]{$id$}(m-1-1)
            edge node [above]{$id$}(m-1-3);
\end{tikzpicture}
\end{center}
\noindent for some morphisms $h, s$ in $C$. We suggestively represent the unit square $U_g$ of a morphism $g$ in $C$ as
\begin{center}
    \begin{tikzpicture}
  \matrix (m) [matrix of math nodes,row sep=2em,column sep=2em,minimum width=2em]
  {
     b&b&b \\
     c&c&c \\};
  \path[-stealth]
    (m-1-2) edge node [above]{$id$}(m-1-1)
            edge node [left] {$g$}(m-2-2)
            edge node [above]{$id$}(m-1-3)
    (m-1-1) edge node [left]{$g$}(m-2-1)
    (m-1-3) edge node [right]{$g$}(m-2-3)
    (m-2-2) edge node [below]{$id$}(m-2-1)
    (m-2-2) edge node [below]{$id$}(m-2-3);
\end{tikzpicture}
\end{center}
With this notation, squares in $SpanC\rtimes_\Phi C$ are all of the form
\begin{center}
    \begin{tikzpicture}
  \matrix (m) [matrix of math nodes,row sep=2em,column sep=2em,minimum width=2em]
  {
      &a& \\
     b&b&b \\
     c&c&c&\\
      &d&\\};
  \path[-stealth]
    (m-1-2) edge node [left]{$f$}(m-2-1)
            edge node [left]{$f$}(m-2-2)
            edge node [right]{$f$}(m-2-3)
    (m-2-2) edge node [below] {$id$}(m-2-1)
            edge node [below] {$id$}(m-2-3)

    (m-2-1) edge node [left]{$g$}(m-3-1)
    (m-2-2) edge node [left]{$g$}(m-3-2)
    (m-2-3) edge node [right]{$g$}(m-3-3)
    
    (m-3-2) edge node [below]{$id$}(m-3-1)
    (m-3-2) edge node [below]{$id$}(m-3-3)

    (m-4-2) edge node [left]{$h$}(m-3-1)
    (m-3-2) edge node [left]{$s$}(m-4-2)
    (m-4-2) edge node [right]{$h$}(m-3-3);
\end{tikzpicture}
\end{center}

\end{ex}
\noindent The following is an example of a globularly generated double category $C$ of length 1, such that not every square in $C$ admits a canonical decomposition and thus is not of the form $\crossb$ for a $\pi_2$-indexing or a $\pi_2$-opindexing $\Phi$.
\begin{ex}\label{ex:notcrossedprod}
Let $\stB$ be the category \textbf{2} freely generated by arrows:

\begin{center}
\begin{tikzpicture}
\matrix(m)[matrix of math nodes, row sep=3em, column sep=3em,text height=1.5ex, text depth=0.25ex]
{0\\
1\\
2\\};
\path[->,font=\scriptsize,>=angle 90]
(m-1-1) edge node [left]{$\alpha$} (m-2-1)
(m-2-1) edge node [left]{$\beta$}(m-3-1)

;
\end{tikzpicture}
\end{center}

\noindent Let $B$ be the 2-category with 0-cells 0,1,2, with only horizontal identity cells, and such that $\pi_2(0)=\mthz/2\mthz,\pi_2(1)=\mthz/2\mthz$, and $\pi_2(2)=1$. The pair $\decB$ is a decorated 2-category. Let $Q_{\decB}$ be the free globularly generated double category generated by $\decB$, see \cite{Orendain2}. $Q_{\decB}$ is a globularly generated internalization of $\decB$, see \cite[Proposition 3.2]{Orendain2}. Moreover, it is easily seen that the horizontal composition of any two squares of length 1 in $Q_{\decB}$ is again of length 1, and thus $\ell Q_{\decB}=1$. The square:

\begin{center}

\tikzset{every picture/.style={line width=0.75pt}} 

\begin{tikzpicture}[x=0.75pt,y=0.75pt,yscale=-1,xscale=1]

\draw [color={rgb, 255:red, 208; green, 2; blue, 27 }  ,draw opacity=1 ]   (320,89.4) -- (370,89.4) ;
\draw [color={rgb, 255:red, 208; green, 2; blue, 27 }  ,draw opacity=1 ]   (320,139.4) -- (370,139.4) ;
\draw [color={rgb, 255:red, 208; green, 2; blue, 27 }  ,draw opacity=1 ]   (320,89.4) -- (320,139.4) ;
\draw [color={rgb, 255:red, 208; green, 2; blue, 27 }  ,draw opacity=1 ]   (370,89.4) -- (370,139.4) ;
\draw [color={rgb, 255:red, 0; green, 0; blue, 0 }  ,draw opacity=1 ]   (320,139.4) -- (320,189.4) ;
\draw [color={rgb, 255:red, 0; green, 0; blue, 0 }  ,draw opacity=1 ]   (370,139.4) -- (370,189.4) ;
\draw [color={rgb, 255:red, 208; green, 2; blue, 27 }  ,draw opacity=1 ]   (320,189.4) -- (370,189.4) ;
\draw [color={rgb, 255:red, 208; green, 2; blue, 27 }  ,draw opacity=1 ]   (320,239) -- (370,239) ;
\draw [color={rgb, 255:red, 208; green, 2; blue, 27 }  ,draw opacity=1 ]   (320,189) -- (320,239) ;
\draw [color={rgb, 255:red, 208; green, 2; blue, 27 }  ,draw opacity=1 ]   (370,189) -- (370,239) ;
\draw [color={rgb, 255:red, 208; green, 2; blue, 27 }  ,draw opacity=1 ]   (320,288.4) -- (370,288.4) ;
\draw [color={rgb, 255:red, 0; green, 0; blue, 0 }  ,draw opacity=1 ]   (320,238.4) -- (320,288.4) ;
\draw [color={rgb, 255:red, 0; green, 0; blue, 0 }  ,draw opacity=1 ]   (370,238.4) -- (370,288.4) ;

\draw (372.36,155.84) node [anchor=north west][inner sep=0.75pt]  [font=\scriptsize]  {$\alpha $};
\draw (309.36,154.84) node [anchor=north west][inner sep=0.75pt]  [font=\scriptsize]  {$\alpha $};
\draw (336.36,205.44) node [anchor=north west][inner sep=0.75pt]  [font=\scriptsize]  {$-1$};
\draw (338.21,157.5) node [anchor=north west][inner sep=0.75pt]  [font=\scriptsize]  {$U$};
\draw (335.96,105.64) node [anchor=north west][inner sep=0.75pt]  [font=\scriptsize]  {$-1$};
\draw (337.96,253.64) node [anchor=north west][inner sep=0.75pt]  [font=\scriptsize]  {$U$};
\draw (308.96,253.04) node [anchor=north west][inner sep=0.75pt]  [font=\scriptsize]  {$\beta $};
\draw (372.96,254.4) node [anchor=north west][inner sep=0.75pt]  [font=\scriptsize]  {$\beta $};

\end{tikzpicture}

\end{center}

\noindent in $Q_{\decB}$ cannot be written as the vertical composition of less than four globular and horizontal identity squares of $Q_{\decB}$ and thus does not admit canonical decompositions. We conclude that $Q_{\decB}$ is not of the form $\crossb$ for a $\Phi\in\EndCat$ nor $\Phi\in \opIndCat$.

\end{ex}

\section{Outlook}\label{s:outlook}

\noindent In this last section we present open problems and lines of study related to the material presented in this paper.
\begin{enumerate}
    \item \textbf{Coherence}: One of the obvious shortcomings of the construction presented in Theorem \ref{thm:main} is that it only considers decorated 2-categories. A version of this construction for decorated bicategories would, in principle, be analogous to its 2-categorical counterpart, but unit isomorphisms, and associators must come into play. We consider that in the context of this paper, this extra structure would obscure the arguments in the proof of Theorem \ref{thm:main} and were thus omitted. A version of our results for decorated bicategories will appear in future work. Another point where the strictness of our construction might be weakened is in the fact that $\pi_2$-indexings were taken to be strict functors. One can think of a version of the construction $\crossb$ for pseudofunctors, or lax/oplax functors $\Phi:\stB\to\Cat$. These conditions will again be considered in future work.  
    \item \textbf{Functoriality}: We have not addressed any questions of functoriality of the construction appearing in the proof of Theorem \ref{thm:main}. There are different ways of providing collections of $\pi_2$-indexings with the structure of a category. The problem of choosing the best possible such structure on $\EndCat$ making the construction $\crossb$ functorial, in the appropriate sense, will be addressed in future work. Functoriality is related to the problem of comparison, i.e. the problem of deciding when two double categories of the form $\crossb$ and $B\rtimes_\Psi B^\ast$ are double equivalent, in terms of the $\pi_2$-indexings $\Phi$ and $\Psi$. We conjecture that a categorical structure on $\EndCat$ can be chosen so as to make the construction $\crossb$ into an embedding of $\EndCat$ into $\gcat$, and thus the two double categories $\crossb$ and $B\rtimes_\Psi B^\ast$ would be double equivalent if and only if $\Phi$ and $\Psi$ are equivalent in the appropriate sense. 
    \item \textbf{Frameability}: One of the main questions we are interested in is when a decorated bicategory $\decB$ admits an internalization that is a framed bicategory. In the particular context of this paper we are interested in the question of when a double category of the form $\crossb$ is the globularly generated piece $\gamma C$ of a framed bicategory. This question will be addressed in future work.
    \end{enumerate}

    \section{Author's declarations}
\noindent We declare that there is no competing interests for this manuscript. We declare that there is no Availability of Data and Materials for this manuscript. Juan Orendain and Jos\'e Rub\'en Maldonado-Herrera contributed to the manuscript equally. Juan Orendain did not receive financial support. Jose Ruben Maldonado-Herrera was financially supported by CONAHCYT program 001458.

\noindent \textbf{Acknowledgements:} The authors would like to thank the anonymous referee, whose valuable comments greatly improved the quality of the paper.

\bibliographystyle{unsrt}
\bibliography{biblio}

\end{document}